\newcommand{\pa}[1]{\left(#1\right)}
\newtheorem{theorem}{Theorem}[section]
\newtheorem{lemma}[theorem]{Lemma}
\newtheorem{proposition}[theorem]{Proposition}
\newtheorem{corollary}[theorem]{Corollary}
\newtheorem{remark}[theorem]{Remark}
\newtheorem{example}[theorem]{Example}
\renewcommand{\epsilon}{\varepsilon}
\begin{document}

\numberwithin{equation}{section}

\title{Local structure of gradient almost Ricci solitons with harmonic Weyl tensor}

\author{\quad V. Borges\footnote{Faculdade de Matemática, Universidade Federal do Pará, 66075-110, Belém, Pará, Brazil, valterborges@ufpa.br.  }   \quad M. A. R. M. Horácio\footnote{ Departamento de Matemática, Universidade de Brasília, 70910-900, Brasília-DF, Brazil, m.a.r.m.horacio@mat.unb.br.} \quad J. P. dos Santos\footnote{ Departamento de Matemática, Universidade de Brasília, 70910-900, Brasília-DF, Brazil, joaopsantos@unb.br.}}

\date{}

\maketitle{}

\begin{abstract}
In this article, we investigate a gradient almost Ricci soliton with harmonic Weyl tensor. We first prove that its Ricci tensor has at most three distinct eigenvalues of constant multiplicities in a neighborhood of a regular point of the potential function. Then, we classify those with exactly two distinct eigenvalues. It is worth mentioning that the case with exactly one eigenvalue has already been settled elsewhere. Our results are based on a local representation of these manifolds as multiply warped products of a one-dimensional base, having at most two Einstein fibers, which we also obtain in this paper. These results extend a result by Catino, who assumes, in addition, that the Weyl tensor is radially flat, and a result by Kim, who considers the four-dimensional case.

\end{abstract}
\vspace{0.2cm} 
\noindent \emph{2020 Mathematics Subject Classification} :  
 53C20, 
 53C21, 
 53C25 
\\ 
\noindent \emph{Keywords}: Almost Ricci soliton, Ricci solitons, Multiply warped products, Harmonic Weyl tensor, Locally conformally flat.

 \section{Introduction and main results}
 
\hspace{.5cm}A gradient almost Ricci soliton $(M, g, f, \lambda)$ is a four-tuple composed of a Riemannian manifold $(M^{n},g)$, with $n\geq3$, and two smooth functions $f,\lambda \in C^{\infty}(M)$ satisfying
\begin{align}\label{fundeq}
	\mathrm{Ric} +  \nabla^2 f= \lambda g.
\end{align}

Gradient almost Ricci solitons were introduced in \cite{pigola}, and there are many works investigating this structure, see for example \cite{barros4,barros2,barros1,brasil,brgs1,cafega,ma}. If we consider $f$ constant, it is well known that $\lambda$ must be constant, and \eqref{fundeq} gives rise to the equation describing an Einstein manifold \cite{besse}. If, on the other hand, we consider $\lambda$ constant in equation \eqref{fundeq}, then $M$ must be a gradient Ricci soliton \cite{hamilton2}. It is common in the literature to consider Einstein manifolds as trivial Ricci solitons \cite{fega}. In any case, results in which either $f$ or $\lambda$ is proven to be constant will be referred to as {\it triviality} results. Equation \eqref{fundeq} also includes other interesting structures in addition to Ricci solitons. Our results apply to the case of gradient Schouten solitons, which are obtained by setting $\lambda=(1/2(n-1))R+\tau$ \cite{brgs3,catino}, where $R$ is the scalar curvature of $M$ and $\tau\in\mathbb{R}$.

It is well known that an Einstein manifold has constant sectional curvature if either $n=3$, or $n\geq4$, and its Weyl tensor vanishes. Recall that the last condition is equivalent to a manifold being locally conformally flat. Complete gradient Ricci solitons were classified under the assumption that either $n=3$ and $\lambda>0$, or $n\geq4$, local conformal flatness, and $\lambda\geq0$ (see \cite{cachen,caoqian,cccmm} and references therein). In the context of almost Ricci solitons, Catino obtained in \cite{catino0} a local representation as a warped product of a one-dimensional base and an Einstein fiber, assuming local conformal flatness.

The results mentioned above admit many different extensions by considering weaker conditions on the Weyl tensor (see, for instance, \cite{caoqian,cccmm,catino0,kim1,kim2,kim3,muse}). This work is concerned with the harmonicity of the Weyl tensor. In this case, there is no loss of generality in assuming $n\geq4$. The classification of complete gradient Ricci solitons with harmonic Weyl tensor and $\lambda>0$ was obtained by combining the results of \cite{fega} and \cite{muse}. When $\lambda=0$ and $n=4$, these solitons were classified in \cite{kim1}. Very recently, the same author obtained a classification in \cite{kim3}, when $\lambda\geq0$ and $n\geq5$. His technique also allowed him to provide a local representation as a multiply warped product, for any constant $\lambda\in\mathbb{R}$ and $n\geq4$. In the case of gradient almost Ricci solitons with harmonic Weyl tensor, Catino obtained in \cite{catino0} a local description of these metrics as a warped product, assuming in addition that $W(\nabla f,\cdot,\cdot,\cdot)\equiv0$. In dimension four, Kim \cite{kim2} obtained a full classification with no further assumptions.

The main goal of this article is to investigate gradient almost Ricci solitons assuming their Weyl tensors are harmonic and $n\geq4$. In our first result, stated below, we deal with the number of eigenvalues of the Ricci tensor. More precisely, we extend to all dimensions a result of Kim \cite{kim2}, where he showed that the number of distinct eigenvalues of the Ricci tensor on a four-dimensional gradient almost Ricci soliton is at most three.

\begin{theorem}\label{maxxnumeig-INTRO}
Let $(M^n,g,f,\lambda)$, $n\geq4$, be a gradient almost Ricci soliton with harmonic Weyl curvature with $f$ nonconstant. Then the Ricci tensor of $M$ has at most three distinct eigenvalues at each point of $M$. If $p\in M$ is a regular point of $f$, then there is an open neighborhood of $p$ such that the number of eigenvalues and their multiplicities are constant within this set.
\end{theorem}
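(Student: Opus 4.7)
The plan is to establish a single algebraic identity that controls the Riemann tensor along $\nabla f$, and then use it to split the tangent space and bound the Ricci eigenvalues on each piece. Differentiating the soliton equation and commuting covariant derivatives via the Ricci identity applied to $\nabla f$ gives
\begin{align*}
\nabla_i R_{jk} - \nabla_j R_{ik} = R_{ijkl}\nabla^l f + (\nabla_i\lambda)g_{jk} - (\nabla_j\lambda)g_{ik}.
\end{align*}
The harmonicity of $W$ is equivalent, for $n\geq 4$, to the vanishing of the Cotton tensor; combining this with the consequence $\mathrm{Ric}(\nabla f)=\tfrac{1}{2}\nabla R - (n-1)\nabla\lambda$ of the twice-contracted Bianchi identity, the previous display collapses to
\begin{align*}
R_{ijkl}\nabla^l f = \tfrac{1}{n-1}\pa{\rho_i\,g_{jk} - \rho_j\,g_{ik}}, \qquad \rho_i := R_{ik}\nabla^k f.
\end{align*}

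Contracting this identity with $\nabla^k f$ and exploiting the antisymmetry $R_{ijkl}=-R_{ijlk}$ makes the left-hand side vanish, yielding $\rho_i\nabla_j f - \rho_j\nabla_i f = 0$, i.e.\ $\rho$ is proportional to $\nabla f$. Hence, on the open set $U=\{\nabla f\neq 0\}$, one has $\mathrm{Ric}(\nabla f) = \mu_1\nabla f$ for a smooth function $\mu_1$, and $\mathrm{Ric}$ preserves the distribution $\mathcal{D}:=\nabla f^\perp$. Substituting the Weyl decomposition of $R_{ijkl}$ into the above identity and testing with $X,Y\in\mathcal{D}$ produces a relation of the form
\begin{align*}
W(X,\nabla f, Y, \nabla f) = -\tfrac{|\nabla f|^2}{n-2}\,\mathrm{Ric}(X,Y) + K\,|\nabla f|^2\,\langle X, Y\rangle,
\end{align*}
with $K$ a scalar built from $\mu_1$, $R$, and $n$. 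Thus, up to a multiple of the metric, $\mathrm{Ric}|_\mathcal{D}$ coincides with the symmetric endomorphism of $\mathcal{D}$ defined by $Y\mapsto W(\cdot,\nabla f, Y,\nabla f)^\sharp$.

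To cap the number of distinct eigenvalues of $\mathrm{Ric}|_\mathcal{D}$ at two, the plan is to exploit the Codazzi property of the Schouten tensor $A = \tfrac{1}{n-2}\pa{\mathrm{Ric} - \tfrac{R}{2(n-1)}g}$, which is another reformulation of the harmonic Weyl condition. Combined with the eigenvector property of $\nabla f$ and the algebraic coupling just derived, one can adapt classical rigidity arguments for Codazzi tensors used in the Ricci soliton setting to force $\mathrm{Ric}|_\mathcal{D}$ to have at most two distinct eigenvalues at each point of $U$. The extra eigendirection, compared with the Ricci soliton case where $\mathrm{Ric}|_\mathcal{D}$ is a multiple of the identity, is produced by the component of $\nabla\lambda$ transverse to $\nabla f$; this is the genuinely new phenomenon that lifts the eigenvalue count from two to three. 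This is the main obstacle: isolating the precise rigidity that prevents a third eigenvalue on $\mathcal{D}$ in arbitrary dimension requires balancing the Codazzi identity for $A$ against the Weyl coupling derived above, and will not follow from generic multilinear algebra alone.

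Since $f$ is nonconstant, $U$ is open and dense in $M$, and the number of distinct eigenvalues of $\mathrm{Ric}$ is lower semicontinuous, so the bound $\leq 3$ extends from $U$ to all of $M$ by continuity. At a regular point $p$, the three candidate eigenvalues are $\mu_1$ in the $\nabla f$-direction together with the two smooth roots of a degree-two polynomial whose coefficients are smooth functions of $\mathrm{Ric}|_\mathcal{D}$; a brief argument using the Codazzi structure shows that the coalescence locus is closed with empty interior near $p$, so both the number of distinct eigenvalues and their multiplicities are constant on a suitable neighborhood of $p$.
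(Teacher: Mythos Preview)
Your setup is sound: the curvature identity $R_{ijkl}\nabla^l f = \tfrac{1}{n-1}(\rho_i g_{jk}-\rho_j g_{ik})$ and the conclusion that $\nabla f$ is a Ricci eigenvector are exactly Lemma~\ref{bari} and item~(1) of Lemma~\ref{caochen}. But the heart of the theorem is the bound ``at most two eigenvalues on $\mathcal{D}=\nabla f^\perp$'', and here your argument is missing. Saying ``one can adapt classical rigidity arguments for Codazzi tensors used in the Ricci soliton setting'' is not a proof; in fact, even for genuine Ricci solitons with harmonic Weyl tensor, $\mathrm{Ric}|_\mathcal{D}$ is \emph{not} a multiple of the identity in general (Kim's four-dimensional examples already have three Ricci eigenvalues), so your stated intuition that the third eigenvalue is ``produced by the component of $\nabla\lambda$ transverse to $\nabla f$'' is wrong on two counts: the Ricci soliton case already has three, and in the almost soliton case one shows (Lemma~\ref{lemma3depend}) that $\lambda$ depends only on the arc-length parameter $s$, so $\nabla\lambda$ is parallel to $\nabla f$ and has no transverse component. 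The Weyl relation you wrote, $W(X,\nabla f,Y,\nabla f)=-\tfrac{|\nabla f|^2}{n-2}\mathrm{Ric}(X,Y)+K|\nabla f|^2\langle X,Y\rangle$, gives no eigenvalue bound by itself: it merely says two symmetric endomorphisms of $\mathcal{D}$ share eigenspaces.

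What the paper actually does is substantially deeper. One first shows that \emph{all} Ricci eigenvalues depend only on $s$, then uses Derdzi\'nski's Codazzi machinery to obtain a local multiply warped product decomposition $I\times_{h_1}N_1\times\cdots\times_{h_k}N_k$ (Theorem~\ref{decompwarp}), with $\xi_i=h_i'/h_i=(\lambda-\lambda_i)/f'$. The eigenvalue bound then comes from an explicit identity: each $\xi_i$ satisfies the \emph{same} quadratic $B\xi^2+(B'+2\lambda)\xi+(C-\lambda)B+C'=0$ with coefficients built from $R$, $\lambda$, $\lambda_1$, $f'$ and their $s$-derivatives (Lemma~\ref{lem_ply_deg2_}), and one checks this polynomial is not identically zero. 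That is the missing idea; your ``degree-two polynomial whose coefficients are smooth functions of $\mathrm{Ric}|_\mathcal{D}$'' in the last paragraph presupposes exactly what must be proved. Finally, your density claim ``$U$ is open and dense'' is not free here: almost Ricci solitons need not be real-analytic, and the paper handles this separately (Lemma~\ref{denseempty}) by showing that on each component of $M_{\mathcal{A}}$ the regular set is either dense or empty.
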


As mentioned above, the result of Catino \cite{catino0} assuming harmonic Weyl tensor and $W(\nabla f,\cdot,\cdot,\cdot)\equiv0$ implies a local representation of $M$ as a warped product $I\times_{h}N^{n-1}$ with a one-dimensional base $I$ and an Einstein fiber $N^{n-1}$. In the next result, we describe the local geometry of a gradient almost Ricci soliton assuming it has harmonic Weyl tensor, but without imposing any other conditions. As we will see, in the general case we can have at most two fibers, and examples given by Kim in dimension four \cite{kim2} show that this estimate on the number of fibers cannot be improved. 

\begin{theorem}\label{decompwarpint}
Any nontrivial gradient almost Ricci soliton with harmonic Weyl curvature is locally a multiply warped product with a one-dimensional base and at most two fibers. Furthermore, the fibers of dimension at least two must be Einstein.
\end{theorem}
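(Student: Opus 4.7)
The plan is to work in a neighborhood of a regular point of $f$, where Theorem~\ref{maxxnumeig-INTRO} already guarantees that $\mathrm{Ric}$ has at most three distinct eigenvalues of constant multiplicities. Two tools drive the argument. First, taking the divergence of \eqref{fundeq} and using the contracted second Bianchi identity yields the almost--Ricci--soliton analogue of the Hamilton identity
\begin{align*}
\mathrm{Ric}(\nabla f)=\tfrac{1}{2}\nabla R-(n-1)\nabla\lambda.
\end{align*}
Second, the harmonic Weyl hypothesis with $n\geq 4$ is equivalent to the Schouten tensor being Codazzi, that is,
\begin{align*}
\nabla_i R_{jk}-\nabla_j R_{ik}=\tfrac{1}{2(n-1)}\bigl(g_{jk}\nabla_i R-g_{ik}\nabla_j R\bigr).
\end{align*}

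First I would show that $\nabla f$ lies in an eigenspace of $\mathrm{Ric}$ and that $|\nabla f|^{2}$, $R$ and $\lambda$ are all constant on each level set of $f$. Contracting the Codazzi identity with $\nabla^{k}f$ and replacing every $\nabla\nabla f$ by $\lambda g-\mathrm{Ric}$ via \eqref{fundeq} forces the Hessian contributions to cancel by symmetry, leaving $df\wedge dR=0$. Analogous manipulations (already employed in proving Theorem~\ref{maxxnumeig-INTRO}) produce $df\wedge d\lambda=0$, so that the displayed Hamilton-type identity makes $\mathrm{Ric}(\nabla f)$ a scalar multiple of $\nabla f$. Differentiating $|\nabla f|^{2}$ along a vector tangent to a level set then yields, via \eqref{fundeq}, the constancy of $|\nabla f|^{2}$ along the level sets.

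Next I would examine the level set $\Sigma_t=f^{-1}(t)$. With $N=\nabla f/|\nabla f|$, the Weingarten operator of $\Sigma_t$ equals $|\nabla f|^{-1}\bigl(\lambda\,\mathrm{Id}-\mathrm{Ric}|_{T\Sigma_t}\bigr)$ thanks to \eqref{fundeq}, so its eigenspaces coincide with those of $\mathrm{Ric}|_{T\Sigma_t}$. Because one of the three possible eigenvalues of $\mathrm{Ric}$ has already been used along $\nabla f$, Theorem~\ref{maxxnumeig-INTRO} leaves at most two distinct eigenvalues of constant multiplicities on $T\Sigma_t$, giving a smooth orthogonal splitting $T\Sigma_t=D_1\oplus D_2$. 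The principal technical step, and the main obstacle, is to verify that each $D_i$ is involutive and that its leaves are totally geodesic both in $\Sigma_t$ and in $M$. I would prove this by evaluating the Codazzi identity on triples of vectors drawn from different eigenspaces, exploiting the constancy of the eigenvalues and the fact that $\nabla R$ and $\nabla\lambda$ are collinear with $N$.

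Finally, since $\mathrm{Ric}(\nabla f)\in\mathrm{span}(\nabla f)$, the integral curves of $N$ are reparametrized geodesics and their flow preserves the decomposition $T\Sigma_t=D_1\oplus D_2$. Using an arc-length parameter $t$ along these geodesics, together with coordinates adapted to the leaves of $D_1$ and $D_2$ on a reference level set, the metric takes the local form $dt^{2}+h_1(t)^{2}g_1+h_2(t)^{2}g_2$, exhibiting $M$ as a multiply warped product over a one-dimensional base with at most two fibers $F_1,F_2$; the warping factors $h_i$ are reconstructed by integrating the principal-curvature eigenvalues on $D_i$. On a fiber of dimension at least two, the corresponding Ricci eigenvalue along $D_i$ is constant, and the standard warped-product Ricci formulas then translate this constancy into $\mathrm{Ric}^{F_i}=c_i\,g_i$ for some constant $c_i$, which is the required Einstein condition.
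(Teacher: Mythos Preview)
Your route to the local warped-product structure is essentially the paper's: the constancy of $R$, $\lambda$, $|\nabla f|^{2}$ along level sets, the fact that $E_{1}=\nabla f/|\nabla f|$ is a Ricci eigenvector, and the use of the Codazzi property of the Schouten tensor (Derdzi\'nski's lemma) to split $T\Sigma_{t}$ into integrable, totally umbilical eigendistributions are precisely the ingredients assembled in Lemmas~\ref{caochen}, \ref{lemma3depend}, \ref{derdlemma} and Theorem~\ref{decompwarp}.

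There is, however, a genuine gap in the step that bounds the number of fibers. You argue that since $\nabla f$ occupies one of the at most three Ricci eigenvalues, Theorem~\ref{maxxnumeig-INTRO} leaves at most two on $T\Sigma_{t}$. This does not follow: nothing in the statement of Theorem~\ref{maxxnumeig-INTRO} prevents the eigenvalue $\lambda_{1}$ realized by $\nabla f$ from having multiplicity greater than one, in which case $\lambda_{1}$ also appears on $T\Sigma_{t}$ and the level set can carry \emph{three} distinct Ricci eigenvalues (hence three principal curvatures, hence three fibers in your construction). Theorem~\ref{maxxnumeig-INTRO} bounds the total number of eigenvalues of $\mathrm{Ric}$, not the number tangent to $\Sigma_{t}$.

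In the paper the logic runs in the opposite direction. One first obtains the multiply warped product with an a priori unbounded number $k$ of fibers (Theorem~\ref{decompwarp}), and then proves a separate key lemma (Lemma~\ref{lem_ply_deg2_}) showing that the logarithmic derivatives $\xi_{i}=h_{i}'/h_{i}$ are all roots of a single \emph{nontrivial} quadratic polynomial \eqref{poly_atmst2}, forcing $k\le 2$ (Corollary~\ref{numbfibalmo}). The eigenvalue bound of Theorem~\ref{maxxnumeig-INTRO} is then a \emph{consequence} of this polynomial identity (via Corollary~\ref{numbereigen}, which actually gives the sharper statement you need: at most two distinct $\lambda_{i}$ among $i\ge 2$). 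So in the paper's framework, feeding Theorem~\ref{maxxnumeig-INTRO} back into the proof of Theorem~\ref{decompwarpint} is circular, and even granted as a black box it is not strong enough for the inference you draw. To close your argument you must either invoke the finer Corollary~\ref{numbereigen} directly, or reproduce the quadratic-polynomial argument for the $\xi_{i}$.
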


The proofs of Theorem \ref{maxxnumeig-INTRO} and Theorem \ref{decompwarpint} are based on showing that certain quantities defined in terms of the warping functions satisfy a nonconstant polynomial of degree at most two. This is quite different from the proofs presented in  \cite{kim1,kim2,kim3,feng}, and has the advantage of being more concise. See Proposition \ref{prop_n=4} and Lemma \ref{lem_ply_deg2_} below, for $n=4$ and $n\geq4$, respectively.

The precise definition of the number of fibers of a multiply warped product is provided in Remark \ref{grmnt_wrpngfnctn}, following the conventions presented in \cite{brovaz}. As illustrated in Example \ref{exwmf}, the sharp upper bound on this quantity established by Theorem \ref{decompwarpint} hinges critically on the harmonicity of the Weyl tensor. Indeed, Example \ref{exwmf} showcases families of gradient almost Ricci solitons on multiply warped product metrics that admit any prescribed number of fibers. 

When $(M^n,g,f,\lambda)$ has harmonic Weyl tensor, Theorem \ref{maxxnumeig-INTRO} ensures that its Ricci tensor has a constant number of distinct eigenvalues around a regular point of $f$, which is at most three. If this number is exactly $1$, then the metric is Einstein, and a classification was given in \cite[Theorem 2.3]{pigola}. Thus, it remains to classify the gradient almost Ricci solitons whose Ricci tensor has exactly either two or three eigenvalues. In dimension four, this was accomplished by Kim (see Theorems 1.2 and 1.3 of \cite{kim2}).

In our next result, we give a complete local classification when $n\geq4$ and the Ricci tensor has exactly two distinct eigenvalues.

\begin{theorem}\label{pendT14'}
	Let $(M^n, g, f, \lambda)$, $n \geq 4$, be a gradient almost Ricci soliton with $f$ nonconstant and harmonic Weyl curvature. Assume its Ricci tensor has exactly two eigenvalues. Then one of the following happens:
	\begin{enumerate}
		\item\label{item_1} $M^{n}$ is locally isometric to a warped product $I\times_{h}N^{n-1}$, with $N^{n-1}$ Einstein and the functions $f$ and $\lambda$ given by \eqref{eqitem_1_harm} and \eqref{eqitem_2_harm}, respectively.
        \item\label{item2'} $M^n$ is locally isometric to the product $B^{k} \times N^{n-k}$, where $(B^{k},g_{B})$ is Ricci-flat, $\nabla_{B} f$ is a vector field on $B$ satisfying $\nabla_{B}\nabla_{B} f=\lambda g_{B}$, $\lambda\neq0$ is a constant, and $N^{n-k}$ is Einstein with Ricci curvature $\lambda$.
	\end{enumerate}
\end{theorem}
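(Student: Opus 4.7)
The plan is to use Theorem \ref{decompwarpint} to reduce locally to a multiply warped product $I \times_{h_1} N_1 \times \cdots \times_{h_r} N_r$ with $r \in \{1, 2\}$, fibers of dimension at least two being Einstein, and then split on the value of $r$.

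If $r = 1$, then $M$ is locally $I \times_h N^{n-1}$ with $N$ Einstein. The Ricci tensor has a radial eigenvalue and a tangential eigenvalue of multiplicity $n-1$, so the hypothesis of exactly two distinct eigenvalues is automatic and we are in the situation of item \ref{item_1}. Writing \eqref{fundeq} in the adapted coordinates (noting that $f$ depends only on $t$ by the warped symmetry) and combining with the harmonic Weyl condition yields a system of ODEs for $h$, $f$ and $\lambda$, whose integration produces the explicit expressions \eqref{eqitem_1_harm} and \eqref{eqitem_2_harm}.

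If $r = 2$, then $M$ is locally $I \times_{h_1} N_1^{k_1} \times_{h_2} N_2^{k_2}$ with both fibers Einstein and $k_1 + k_2 = n - 1$. The Ricci tensor has three natural eigenvalues $\mu_0, \mu_1, \mu_2$ (radial, $N_1$, $N_2$), so the two-eigenvalue assumption forces a coincidence among them. If $\mu_1 = \mu_2$, evaluating \eqref{fundeq} on the two fiber directions and subtracting yields $(h_1'/h_1 - h_2'/h_2)\, f' = 0$; since $f$ is nonconstant along $\partial_t$, this forces $h_1$ proportional to $h_2$, contradicting the convention on multiply warped decompositions recorded in Remark \ref{grmnt_wrpngfnctn}. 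If instead $\mu_0 = \mu_i$ for some $i$, say $i = 1$, comparing the radial and $N_1$ components of \eqref{fundeq} gives $f'' = (h_1'/h_1)\, f'$; feeding this back into the harmonic Weyl identity together with the soliton equation along $N_2$ forces $h_2$ to be constant, so that $M$ splits locally as the Riemannian product $(I \times_{h_1} N_1) \times N_2$. Setting $B = I \times_{h_1} N_1$ and $N = N_2$, the soliton equation restricted to $N$ reads $\mathrm{Ric}_N = \lambda\, g_N$, which forces $\lambda$ to be constant (and nonzero, since $\lambda = 0$ would collapse the two eigenvalues into one, contradicting the hypothesis). Restricted to $B$, \eqref{fundeq} becomes $\mathrm{Ric}_B + \nabla_B^2 f = \lambda\, g_B$; using the coincidence $\mu_0 = \mu_1$ together with the vanishing of the $N_2$-contribution to the radial and $N_1$ components of $\mathrm{Ric}_M$ (which $h_2$ constant enforces), one checks that $\mathrm{Ric}_B = 0$, and hence $\nabla_B^2 f = \lambda\, g_B$, delivering item \ref{item2'}.

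The main technical obstacle is the subcase $\mu_0 = \mu_1$ of the two-fiber analysis, where one must carefully combine the soliton equation in every direction, the harmonic Weyl identity, and the explicit expressions for $\mu_0, \mu_1, \mu_2$ in terms of $h_1, h_2$ to rule out genuine multiply warped geometries and force $h_2$ to be constant. Once the product decomposition is in hand, reading off the stated properties of $B$, $N$, $f$ and $\lambda$ is a routine unpacking of \eqref{fundeq} on the two factors.
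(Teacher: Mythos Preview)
Your overall architecture matches the paper's: split on the number of fibers via Theorem~\ref{decompwarpint}, dispatch the single-fiber case through Proposition~\ref{pendT51_harmonic}, and in the two-fiber case rule out $\mu_1=\mu_2$ and then argue that $\mu_0=\mu_1$ forces $h_2$ to be constant, after which the product structure and the properties of $B$ and $N$ follow. So the skeleton is right.

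The genuine gap is exactly where you flag the ``main technical obstacle'': the sentence ``feeding this back into the harmonic Weyl identity together with the soliton equation along $N_2$ forces $h_2$ to be constant'' is an assertion, not an argument, and in fact this step is far from routine. What one actually obtains from $\mathrm{Ric}_{11}=\mathrm{Ric}_{22}$ is $f''=(h_1'/h_1)f'$, hence $h_1=C_1^{-1}f'$; combining with the harmonic Weyl relation $h_1''/h_1=h_2''/h_2$ and the $N_2$-component of the soliton equation does \emph{not} immediately force $b=h_2'/h_2$ to vanish. Instead, if $h_2$ is nonconstant one can also express $h_2$ and $\lambda$ in terms of $f$ (Lemma~\ref{f_determines_ever}), and then $f$ is trapped into satisfying two independent third-order ODEs, \eqref{sit_f_1'} and \eqref{sit_f_2'}. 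Showing these have no nonconstant common solution is the real content: for $r_1=1$ it is a short manipulation, but for $r_1>1$ the paper eliminates $\Lambda$ and $f'''$ to obtain algebraic relations among $f'$ and $f''$, and ultimately constructs a nontrivial degree-$12$ polynomial $P$ with $P(f'(s))=0$ for all $s$ (Lemma~\ref{lem:nontrivial_polynomial}), forcing $f'$ constant and hence $f'\equiv 0$. None of this mechanism is visible in your outline; ``carefully combine'' does not substitute for it.

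A smaller point: once $h_2$ is constant, the harmonic Weyl identity \eqref{symmet_hs} gives $h_1''=0$, hence $h_1(s)=As+B$, and you still need to verify $\mu_1=(r_1-1)A^2$ so that $B=I\times_{h_1}N_1$ is genuinely Ricci-flat (not merely that the ambient Ricci eigenvalues $\mathrm{Ric}_{11},\mathrm{Ric}_{22}$ vanish); the paper extracts this from \eqref{neq2}. Your final paragraph treats this as ``routine unpacking'', which is fair, but it does require that extra relation.
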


Triviality results as those of item \ref{item2'} in Theorem \ref{pendT14'} for almost Ricci solitons, where $\lambda$ is proven to be constant, can be found for example in \cite[Corollary 2.6]{brgs1}, \cite[Theorem 1.4]{cafega} or \cite[Proposition 3.1]{ma}.

It is worth mentioning that complete manifolds $(B^{k},g_{B})$ carrying a homothetic nonparallel vector field $\nabla_{B} f$, i.e. satisfying $\nabla_{B}\nabla_{B} f=\lambda g_{B}$ with $\lambda\neq0$ constant, were completely classified in \cite{kerbrat}. Thus, assuming completeness in Item \ref{item2'} of Theorem \ref{pendT14'} gives rise to the following rigidity result, in the sense of \cite{petersen}.

\begin{corollary}\label{cor}
	Let $(M^n, g, f, \lambda)$, $n \geq 4$, be a complete gradient almost Ricci soliton with $f$ nonconstant, harmonic Weyl curvature and assume $W(\nabla f,\cdot,\cdot,\cdot)$ does not vanish identically. If its Ricci tensor has exactly two eigenvalues, then $\lambda\neq0$ is a constant, $M^n$ is isometric to a quotient of the rigid soliton $\mathbb{R}^{k}\times N^{n-k}$ with $k\geq2$, and $f(x,p)=\frac{1}{2}\lambda \vert x\vert^2+\left\langle v,x\right\rangle+c$, where $v\in\mathbb{R}^k$ and $c\in\mathbb{R}$.
\end{corollary}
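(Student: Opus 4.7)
The plan is to apply Theorem \ref{pendT14'} together with the Kerbrat classification \cite{kerbrat} of complete manifolds admitting a nonparallel homothetic vector field. Theorem \ref{pendT14'} yields two local alternatives: either (i) $M$ is locally a warped product $I\times_{h}N^{n-1}$ with $N^{n-1}$ Einstein, or (ii) $M$ is locally isometric to a product $B^{k}\times N^{n-k}$ with $B^{k}$ Ricci-flat, $N^{n-k}$ Einstein of Ricci constant $\lambda\neq 0$, and $\nabla_{B}\nabla_{B}f=\lambda g_{B}$.

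The first step is to exclude (i). On a warped product $I\times_{h}N^{n-1}$ with Einstein fiber, writing $\mathrm{Ric}(\partial_{r},\partial_{r})=\rho_{1}$ and $\mathrm{Ric}|_{TN}=\rho_{2}\,g^{N}$, so that $R=\rho_{1}+(n-1)\rho_{2}$, a direct computation from the warped-product curvature formulas gives
\begin{equation*}
W(\partial_{r},X,\partial_{r},Y)=\frac{h^{2}}{(n-1)(n-2)}\bigl[-\rho_{1}-(n-1)\rho_{2}+R\bigr]g^{N}(X,Y)=0,
\end{equation*}
and the remaining Weyl components with an entry along $\partial_{r}$ vanish by analogous manipulations together with standard symmetries. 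Because $\nabla f$ is parallel to $\partial_{r}$, this would force $W(\nabla f,\cdot,\cdot,\cdot)\equiv 0$, contradicting the hypothesis. Hence (ii) holds. An entirely similar computation on the direct product $\mathbb{R}\times N^{n-1}$ (now with $\rho_{1}=0$, $\rho_{2}=\lambda$, $R=(n-1)\lambda$) shows that $k=1$ likewise forces $W(\nabla f,\cdot,\cdot,\cdot)\equiv 0$, so we must in fact have $k\geq 2$.

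In the remaining case, $\nabla_{B}f$ is a nonparallel homothetic vector field on the Ricci-flat manifold $B^{k}$. Completeness of $M$ transfers through the local product structure to completeness of $B^{k}$ and $N^{n-k}$, and Kerbrat's theorem then implies that the Riemannian universal cover of $B^{k}$ is isometric to Euclidean $\mathbb{R}^{k}$, so that up to quotient $M$ is isometric to the rigid soliton $\mathbb{R}^{k}\times N^{n-k}$. In Cartesian coordinates on $\mathbb{R}^{k}$, the equation $\partial_{i}\partial_{j}f=\lambda\delta_{ij}$ integrates at once to $f(x,p)=\tfrac{1}{2}\lambda|x|^{2}+\langle v,x\rangle+c$, as claimed. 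The main delicate point is the Weyl calculation used to rule out case (i) and $k=1$; though essentially bookkeeping on warped products with $1$-dimensional base and Einstein fiber, it is precisely the step where the hypothesis $W(\nabla f,\cdot,\cdot,\cdot)\not\equiv 0$ is consumed.
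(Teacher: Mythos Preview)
Your proof is correct and follows essentially the same route as the paper: apply Theorem \ref{pendT14'}, use the hypothesis $W(\nabla f,\cdot,\cdot,\cdot)\not\equiv 0$ to exclude the single-fiber case, and then invoke Kerbrat's theorem \cite{kerbrat} to identify the universal cover of the Ricci-flat factor with $\mathbb{R}^{k}$. Two small remarks: first, your separate exclusion of $k=1$ is not needed, since in the paper's proof of Theorem \ref{pendT14'} the factor $B^{k}$ arises as $I\times_{h_{1}}N_{1}^{r_{1}}$ with $r_{1}\geq 1$, so $k=r_{1}+1\geq 2$ automatically; second, the paper also supplies a self-contained alternative to Kerbrat, using the explicit linear form $h_{1}(s)=As+B$ together with completeness to force $N_{1}$ to have constant sectional curvature $A^{2}$, which yields the Euclidean cover directly.
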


Now, we consider the locally conformally flat case. It was proved by Catino in \cite[Corollary 1.4]{catino0} that they are locally isometric to warped products. As a consequence of our computations, we obtain expressions for $\lambda$ and $f$ in terms of the warping function. Our results, together with Catino’s, imply the following:

\begin{corollary}\label{pendT15}
	Let $(M^n, g, f, \lambda)$ be a gradient almost Ricci soliton. Then, it is locally conformally flat if and only if for any regular point $p\in M$ of $f$ there is a neighborhood of $p$ which is isometric to a warped product $I\times_{h}N^{n-1}$, where $h:I\rightarrow\mathbb{R}$ is a positive warping function, $I$ is an interval, $N^{n-1}$ has constant sectional curvature, and the functions $f$ and $\lambda$ are given by \eqref{eqitem_1_harm} and \eqref{eqitem_2_harm}, respectively.
\end{corollary}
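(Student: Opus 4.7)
The plan is to prove the equivalence by combining Catino's local warped product theorem with Item \ref{item_1} of Theorem \ref{pendT14'}. The key observation is that local conformal flatness means $W\equiv0$, which trivially implies both that $W$ is harmonic and that $W(\nabla f,\cdot,\cdot,\cdot)\equiv0$, placing us within the hypotheses of \cite[Corollary 1.4]{catino0}.

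For the forward direction, I assume $M^n$ is locally conformally flat and fix a regular point $p$ of $f$. By \cite[Corollary 1.4]{catino0}, a neighborhood of $p$ is isometric to a warped product $I\times_{h}N^{n-1}$ with $N^{n-1}$ Einstein. Next, I would invoke the standard fact that a warped product with a one-dimensional base and an Einstein fiber is locally conformally flat if and only if the fiber has constant sectional curvature: for $n-1=2$ this is automatic, and for $n-1\geq3$, one computes that the tangential components of $W$ on $I\times_{h}N^{n-1}$ coincide, up to a conformal factor involving $h$, with the Weyl tensor of $N$, so $W\equiv0$ on $M$ forces $W_{N}\equiv0$; combined with the Einstein condition, this yields constant sectional curvature on $N$. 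At this point, the Ricci tensor of such a warped product has at most two distinct eigenvalues, so Item \ref{item_1} of Theorem \ref{pendT14'} applies and provides precisely the expressions \eqref{eqitem_1_harm} and \eqref{eqitem_2_harm} for $f$ and $\lambda$ in terms of $h$.

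For the reverse direction, I assume the local warped product structure $I\times_{h}N^{n-1}$ with $N$ of constant sectional curvature, and with $f$ and $\lambda$ given by \eqref{eqitem_1_harm} and \eqref{eqitem_2_harm}. The same characterization of locally conformally flat warped products with a one-dimensional base immediately yields that $M$ is locally conformally flat. That $(M,g,f,\lambda)$ satisfies the almost Ricci soliton equation \eqref{fundeq} reduces to substituting the standard expressions for $\mathrm{Ric}$ and $\nabla^{2}f$ on the warped product and checking that the defining equations of \eqref{eqitem_1_harm} and \eqref{eqitem_2_harm} are exactly what is needed.

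The main delicacy is the step identifying constant sectional curvature of $N$ from the local conformal flatness of the warped product in combination with the Einstein condition; this is a classical Weyl tensor computation on warped products, but it is the point where Catino's warped product output is upgraded from an Einstein fiber to a constant curvature fiber, which is what makes the corollary slightly stronger than a direct application of \cite[Corollary 1.4]{catino0}.
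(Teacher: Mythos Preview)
Your overall strategy is sound and matches the paper's in spirit, but your invocation of Theorem~\ref{pendT14'} is logically imprecise and should be replaced by a direct appeal to Proposition~\ref{pendT51_harmonic}. Theorem~\ref{pendT14'} is a dichotomy: under its hypotheses, \emph{either} Item~\ref{item_1} \emph{or} Item~\ref{item2'} holds. You cannot simply say ``Item~\ref{item_1} applies'' without ruling out Item~\ref{item2'}; you would need to argue that the single-fiber warped product coming from Catino (with nonconstant $h$) cannot be the Riemannian product of Item~\ref{item2'}. Moreover, Theorem~\ref{pendT14'} requires $n\ge 4$ and \emph{exactly} two Ricci eigenvalues, whereas Corollary~\ref{pendT15} is stated without the dimension restriction and you only verified ``at most two''. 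The paper sidesteps all of this: once Catino's result hands you the warped product $I\times_h N^{n-1}$, Proposition~\ref{pendT51_harmonic} (which is just a manipulation of the soliton equations \eqref{SistemaSimples}) immediately yields \eqref{eqitem_1_harm} and \eqref{eqitem_2_harm}, with no dimension or eigenvalue-count hypothesis.

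A smaller point: the paper attributes to Catino not merely an Einstein fiber but directly a fiber of constant sectional curvature in the locally conformally flat case (and that the Ricci tensor has exactly two eigenvalues). Your extra Weyl-tensor argument upgrading ``Einstein'' to ``constant curvature'' is correct, but the paper treats this as already contained in \cite{catino0}. Your reverse direction is essentially the paper's Corollary~\ref{LCF_cor_defin_1fiber}, so there the two approaches coincide.
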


Thus, any warped product $I\times_{h}N^{n-1}$, with $N^{n-1}$ a space form, can be made into a locally conformally flat almost Ricci soliton.

Now we present another consequence of our results, concerning gradient Schouten solitons. As mentioned earlier, they are special instances of gradient almost Ricci solitons, and arise as self-similar solutions of the Schouten flow \cite{catino0,catino2,catino}. In \cite{brgs4}, the first author showed that complete gradient locally conformally flat Schouten solitons are rigid, in the sense of \cite{petersen}. Below, we extend this result, as an application of Theorem \ref{pendT14'}.
\begin{theorem}\label{scht_weyHar}
    If a complete gradient Schouten soliton has harmonic Weyl tensor, then it is rigid.
\end{theorem}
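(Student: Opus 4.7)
The plan is to combine Theorems~\ref{maxxnumeig-INTRO}, \ref{decompwarpint}, and \ref{pendT14'} with an identity that is specific to Schouten solitons. Taking the divergence of $\mathrm{Ric} + \nabla^2 f = \lambda g$, and combining with the twice-contracted Bianchi identity, the commutation formula $\mathrm{div}(\nabla^2 f) = d(\Delta f) + \mathrm{Ric}(\nabla f, \cdot)$, and the trace relation $R + \Delta f = n\lambda$, one obtains
\begin{equation*}
\mathrm{Ric}(\nabla f) = \tfrac{1}{2}\nabla R - (n-1)\nabla\lambda,
\end{equation*}
which holds on any gradient almost Ricci soliton. In the Schouten case, $\lambda = \frac{R}{2(n-1)} + \tau$ gives $(n-1)\nabla\lambda = \tfrac{1}{2}\nabla R$, and hence $\mathrm{Ric}(\nabla f) \equiv 0$. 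In particular, $0$ is an eigenvalue of $\mathrm{Ric}$ with eigenvector $\nabla f$ at every regular point of $f$.

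Fix a regular point $p$ of $f$. By Theorem~\ref{maxxnumeig-INTRO}, on a neighborhood $U$ of $p$ the Ricci tensor has one, two, or three distinct eigenvalues of constant multiplicity. If the eigenvalue is unique, then $U$ is Einstein and, since $0$ is an eigenvalue, Ricci-flat; the soliton equation reduces to $\nabla^2 f = \tau g$, and completeness together with Kerbrat's classification (for $\tau \neq 0$) or the Cheeger--Gromoll splitting theorem (for $\tau = 0$) shows that $M$ is a rigid quotient of $\mathbb{R}^n$ or of $\mathbb{R} \times M'$. If there are exactly two eigenvalues, Theorem~\ref{pendT14'} applies: Item~\ref{item2'} is already a rigid product, while in Item~\ref{item_1} we have $U \cong I \times_h N^{n-1}$ with $N$ Einstein and the $\partial_t$-eigenvalue of $\mathrm{Ric}$ equal to $-(n-1)h''/h$; the identity $\mathrm{Ric}(\nabla f) = 0$ then forces $h'' \equiv 0$, so $h(t) = at + b$. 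If $a \neq 0$, the warped product develops a conical singularity at the zero of $h$, which can be smoothed into a complete manifold only when $N$ is the round sphere — but then the fiber eigenvalue also vanishes and we are back to the one-eigenvalue case. Hence $a = 0$, $U$ is a Riemannian product, and rigidity follows.

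The three-eigenvalue case is the main obstacle. Here Theorem~\ref{decompwarpint} provides locally a multiply warped product $I \times_{h_1} F_1^{n_1} \times_{h_2} F_2^{n_2}$ with Einstein fibers, and $\mathrm{Ric}(\partial_t, \partial_t) = 0$ yields the first-order constraint $n_1 h_1''/h_1 + n_2 h_2''/h_2 = 0$. Combining this with the soliton equations along each fiber and the Schouten relation $\lambda = \frac{R}{2(n-1)} + \tau$ produces an overdetermined ODE system in $h_1, h_2, f$. The expected outcome is that no complete solution sustains three distinct Ricci eigenvalues, so the analysis always collapses to one of the previous cases; the challenge is to extract this directly from the ODE system and to rule out cone-type singularities at the boundary of the maximal existence interval of the warping functions. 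Once local rigidity is established at every regular point of $f$, the constancy of $\lambda$ and the product structure propagate throughout the regular set, and a standard Kerbrat/Tashiro argument on the Euclidean factor globalizes the conclusion to the rigid decomposition in the sense of \cite{petersen}.
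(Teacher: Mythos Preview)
Your proposal has a genuine gap: you explicitly leave the three-eigenvalue (two-fiber) case unfinished, writing only that ``the expected outcome is that no complete solution sustains three distinct Ricci eigenvalues'' and that ``the challenge is to extract this directly from the ODE system.'' The paper does complete this, and the two missing steps are both simple once seen.

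First, you already have $\mathrm{Ric}(\nabla f)=0$, hence $\mathrm{Ric}_{11}=-r_1 h_1''/h_1 - r_2 h_2''/h_2=0$. What you did not use is the harmonic-Weyl identity \eqref{symmet_hs}, namely $h_1''/h_1=h_2''/h_2$; together these force $h_i''\equiv 0$, so each $h_i(s)=A_i s+B_i$ is affine. (Equivalently, Lemma~\ref{bari} with $\lambda=R/(2(n-1))+\tau$ gives $\mathrm{Rm}(\nabla f,\cdot,\cdot,\cdot)\equiv 0$ directly, hence $h_i''=0$.) Second, with both warping functions affine, the paper computes $f'$ in two independent ways: subtracting the two fiber equations of \eqref{SistemaTriplo} and solving for $f'$ produces a rational function of $s$, whereas integrating $f''=\lambda=R/(2(n-1))+\tau$ (with $R$ given by Lemma~\ref{warped}) produces, when $A_1A_2\neq 0$, a term proportional to $\log\bigl((A_1 s+B_1)/(A_2 s+B_2)\bigr)$. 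The two expressions are incompatible, so $A_1A_2=0$, and one warping function is constant; the analysis then collapses to the rigid product handled in Theorem~\ref{pendT14'} and Corollary~\ref{cor}. No cone-type boundary analysis is needed.

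A smaller remark: the paper organizes the argument by number of \emph{fibers} (one or two) rather than by number of eigenvalues, which is cleaner here. In the one-fiber case it also avoids your informal ``conical singularity'' step: once $h(s)=As+B$, integrating the Schouten equations gives $f$ with an explicit term proportional to $(\mu-(n-2)A^2)\log(As+B)$, and completeness forces this coefficient to vanish, yielding the Gaussian soliton directly.
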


In what follows, we describe how this article is organized. In Section \ref{prelim}, we present preliminary results concerning manifolds whose Cotton tensor is Codazzi, and provide a few formulas relating geometric quantities of multiply warped products with their counterparts on the base and fibers. 

In Section \ref{cotton_results}, we investigate the gradient almost Ricci solitons whose Cotton tensor vanishes. We first show that $R$, $\lambda$, and the eigenvalues of $\mathrm{Ric}$ depend only on the arc length parameter $s$ associated with an integral curve of $\nabla f/\vert\nabla f\vert$. This is done in Subsection \ref{subsec_dep_s}. In Subsection \ref{der_ivey_ex}, we show that these solitons are locally represented as multiply warped products $I\times_{h_1} N_1^{r_1}\times\cdots\times_{h_k} N_{k}^{r_k}$, having a one-dimensional base and $k$ fibers, and that $N_i^{r_{i}}$ is Einstein if $r_i\geq2$. In the same subsection, we show that $k\leq2$ by constructing a nonconstant polynomial of degree at most two, which has $h'_{i}/h_{i}$ as a root for each $i\in\{1,\ldots,k\}$. We use this estimate to obtain, as a consequence, that the number of distinct eigenvalues of the Ricci tensor is at most three. These results prove Theorem \ref{maxxnumeig-INTRO} and Theorem \ref{decompwarpint}.

In Section \ref{subs_mult_warp}, we investigate more closely the almost Ricci solitons that are multiply warped products with at most two fibers, and have harmonic Weyl tensor. The case with only one fiber is considered in Subsection \ref{subs_mult_warp_1F}, and the case with two fibers is treated in Subsection \ref{subs_mult_warp_2F}. These results are used in subsequent proofs.

In Section \ref{quantity_multip}, we present the proofs of Theorem \ref{pendT14'}, Corollary \ref{cor}, Corollary \ref{pendT15} and Theorem \ref{scht_weyHar}. We start the section by investigating the case where the Ricci tensor has exactly two distinct eigenvalues. Regarding the proof of Theorem \ref{pendT14'}, two cases may occur according to the number of fibers. The first one is when the local geometry is that of only one fiber, which gives item \ref{item_1}. This case is simple to solve. The second one is when the local geometry is that of two fibers. In this case, the argument is more subtle. Part of the proof is by contradiction, and is based on writing $\lambda$ and the warping functions in terms of $f$ and its derivatives (see Lemma \ref{f_determines_ever} below), which allows us to conclude that $f$ satisfies two distinct ordinary differential equations. This, in turn, forces $f$ to be constant, which contradicts our initial assumption that $f$ is nonconstant. The case $r_{1}=1$ is simpler, while the case $r_{1}>1$ consists of constructing a polynomial $P(x)$ of degree $12$ such that $P(f'(s))=0,\ \forall s$, which is used to prove that $f$ is constant. The construction of $P(x)$ is involved, and is presented in the Appendix (see Lemma \ref{lem:nontrivial_polynomial}).

\section{Preliminaries}\label{prelim}

\hspace{.5cm}Given vector fields $X,Y,Z,T\in\mathfrak{X}(M)$, we recall that
\begin{align}
	&\mathrm{Rm}(X,Y,Z,T)=\left\langle\nabla_{Y}\nabla_{X}Z-\nabla_{X}\nabla_{Y}Z+\nabla_{[X,Y]}Z,T\right\rangle \nonumber,\\
	&W(X,Y,Z,T)=\mathrm{Rm}(X,Y,Z,T)-\frac{1}{n-2}\left(\left(\mathrm{Ric}-\frac{R}{n}g\right)\varowedge g\right)(X,Y,Z,T)-\frac{R}{2n(n-1)}(g\varowedge g)(X,Y,Z,T) \nonumber,\\
	&C(X, Y, Z) = \left(\nabla_X \mathrm{Ric}\right)(Y, Z) -  \left(\nabla_Y \mathrm{Ric}\right)(X, Z) -\frac{1}{2(n-1)} \left\{\left(\nabla_X (Rg)\right)(Y, Z) - \left(\nabla_Y (Rg)\right)(X, Z)\right\},\nonumber
\end{align}
are the curvature, Weyl and Cotton tensors, respectively, where $\varowedge $ is the Kulkarni-Nomizu product, whose definition can be found in \cite[page 47]{besse}, for example.

Recall that when $n=3$, the Weyl tensor vanishes identically. If $n\geq4$, then the vanishing of $W$ is equivalent to the Riemannian manifold being locally conformally flat. On the other hand, $M^3$ is locally conformally flat if and only if the Cotton tensor is identically zero. In higher dimensions, the vanishing of $C$ is equivalent to $M$ having harmonic Weyl tensor.

\subsection{Results for when the Weyl tensor is harmonic}

\hspace{.5cm}In this subsection, we recall Lemma 2.1 and Lemma 2.2 of \cite{kim1}. In the version of Lemma 2.1 below, we emphasize the fact that it is an equivalence.

\begin{lemma}[Lemma 2.1 of \cite{kim1}]\label{bari}
	Suppose $(M^n, g, f, \lambda)$ is a gradient almost Ricci soliton. Then $M$ has zero Cotton tensor if, and only if,
	\begin{equation}\tag{\ref{bari}}\label{harmWeylAlmost}
			\begin{aligned}
		\mathrm{Rm
}(\nabla f,X,Y,Z)&=Y\left(\frac{R}{2(n-1)}-\lambda\right)g(X,Z)-Z\left(\frac{R}{2(n-1)}-\lambda\right)g(X,Y)\nonumber\\
		&=\frac{1}{n-1}(\mathrm{Ric}(\nabla f,Y)g(X,Z)-\mathrm{Ric}(\nabla f,Z)g(X,Y))
	\end{aligned}
	\end{equation}
	for all vector fields $X,Y,Z \in \mathfrak{X}(M)$. If, in particular, $n\geq4$, the condition above is equivalent to the almost soliton having harmonic Weyl curvature.
\end{lemma}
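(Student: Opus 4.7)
My approach would be to extract the Cotton tensor from a commutation-formula calculation applied to $\nabla f$, using the almost soliton equation $\nabla^{2} f = \lambda g - \mathrm{Ric}$. This reduces the claimed identity to a direct algebraic rearrangement.

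First, view the soliton equation as $\nabla_{Y}\nabla f = \lambda Y - \mathrm{Ric}(Y)$ and substitute it into the Ricci identity for $\nabla f$. The terms proportional to $\lambda$ and to $\mathrm{Ric}$ collapse because $\nabla_{X}Y - \nabla_{Y}X = [X,Y]$, so pairing with $Z$ produces
\[
\langle R(X,Y)\nabla f, Z\rangle = X(\lambda)g(Y,Z) - Y(\lambda)g(X,Z) - (\nabla_{X}\mathrm{Ric})(Y,Z) + (\nabla_{Y}\mathrm{Ric})(X,Z).
\]
In parallel, tracing the soliton equation yields $R + \Delta f = n\lambda$; combining this with the Bochner-type identity $\Delta\nabla f = \nabla\Delta f + \mathrm{Ric}(\nabla f)$ and the twice-contracted second Bianchi identity $2\,\mathrm{div}\,\mathrm{Ric} = \nabla R$ gives
\[
\nabla R - 2(n-1)\nabla\lambda = 2\,\mathrm{Ric}(\nabla f),
\]
equivalently $X\bigl(R/(2(n-1)) - \lambda\bigr) = \tfrac{1}{n-1}\mathrm{Ric}(\nabla f, X)$.

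Substituting this relation into the commutation formula above and recognizing the definition of the Cotton tensor, I obtain (up to the paper's sign convention for $\mathrm{Rm}$)
\[
\mathrm{Rm}(X,Y,\nabla f, Z) = C(X,Y,Z) + \tfrac{1}{n-1}\bigl[\mathrm{Ric}(\nabla f, X)g(Y,Z) - \mathrm{Ric}(\nabla f, Y)g(X,Z)\bigr].
\]
Applying the pair-swap symmetry $\mathrm{Rm}(\nabla f, X, Y, Z) = \mathrm{Rm}(Y, Z, \nabla f, X)$ and relabeling indices rewrites this in the form stated in Lemma~\ref{bari}. Since $C$ is antisymmetric in its first two slots and satisfies a cyclic identity, $C\equiv 0$ if and only if $C(Y,Z,X)\equiv 0$ for all $X,Y,Z$, so the equivalence follows. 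The first equality in the statement is then just the relation $X\bigl(R/(2(n-1))\bigr) - X(\lambda) = \tfrac{1}{n-1}\mathrm{Ric}(\nabla f, X)$. For $n\geq 4$, the equivalence with harmonic Weyl curvature is a consequence of the classical divergence formula $\mathrm{div}\,W = \tfrac{n-3}{n-2}\,C$.

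The main obstacle is purely bookkeeping: one must carefully track the sign convention for $\mathrm{Rm}$ adopted by the paper (which differs from the more standard one by a sign) and apply the symmetries of $\mathrm{Rm}$ and $C$ in the correct order to match the precise form of the displayed identity.
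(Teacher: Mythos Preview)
Your argument is correct and is the standard derivation: commute covariant derivatives on $\nabla f$, substitute $\nabla^{2}f=\lambda g-\mathrm{Ric}$, recognize the Cotton tensor, and use the traced identity $\mathrm{Ric}(\nabla f)=(n-1)\nabla\bigl(\tfrac{R}{2(n-1)}-\lambda\bigr)$ to obtain the second form. Note, however, that the paper does not supply its own proof of this lemma; it is quoted directly as Lemma~2.1 of \cite{kim1}, so there is no in-paper argument to compare against---your write-up simply fills in the omitted (and well-known) computation.
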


\begin{lemma}[Lemma 2.2 of \cite{kim1}]\label{caochen}
	Let $(M,g,f,\lambda)$ be a gradient almost Ricci soliton with zero Cotton tensor and nonconstant $f$. Let $c$ be a regular value of $f$ and $\Sigma_{c}=f^{-1}(c)$ be the level surface of $f$. Then,
	\begin{enumerate}
		\item Where $\nabla f\neq0$, $E_{1}=\frac{\nabla f}{|\nabla f|}$ is an eigenvector of $\mathrm{Ric}$.
		\item\label{cao2} $|\nabla f|$ is constant on a connected component of $\Sigma_{c}$.
		\item\label{cao3} There is a function $s$ locally defined with $s(x)=\int\frac{\mathrm{d} f}{|\nabla f|}$, such that $\mathrm{d} s=\frac{\mathrm{d} f}{|\nabla f|}$ and $E_{1}=\nabla s$.
		\item\label{cao4} $E_{1}E_{1}f=-\mathrm{Ric}(E_{1},E_{1})+\lambda$. In particular, $-\mathrm{Ric}(E_{1},E_{1})+\lambda$ is constant on a connected component of $\Sigma_{c}$.
		\item\label{cao5} Near a point in $\Sigma_{c}$, the metric $g$ can be written as
		\begin{align*}
			g=\mathrm{d} s^2+\sum_{i,j\geq2}g_{ij}(s,x_{2},\ldots,x_{n}) \ \mathrm{d} x_{i}\otimes \mathrm{d} x_{j}.
		\end{align*}
		\item $\nabla_{E_{1}}E_{1}=0$.
	\end{enumerate}
\end{lemma}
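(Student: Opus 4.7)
The plan is to deduce the six items sequentially, using only the curvature identity from Lemma \ref{bari} and the defining equation \eqref{fundeq} as inputs. The crucial observation is that \eqref{harmWeylAlmost} essentially forces $\mathrm{Ric}(\nabla f,\cdot)$ to act trivially on $\nabla f^{\perp}$, after which everything else cascades.

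First I would substitute $X=\nabla f$ into the identity of Lemma \ref{bari}. The left-hand side vanishes by antisymmetry of $\mathrm{Rm}$ in its first two slots. On the right-hand side, restricting to $Y\perp\nabla f$ and an arbitrary $Z$ kills the $\mathrm{Ric}(\nabla f,Z)g(\nabla f,Y)$ term, leaving $\mathrm{Ric}(\nabla f,Y)\,g(\nabla f,Z)=0$ for every $Z$. Since $\nabla f\neq 0$, this yields $\mathrm{Ric}(\nabla f,Y)=0$ for every $Y\perp\nabla f$, which is item 1. For item 2, differentiate $|\nabla f|^2$ along such a $Y$: $Y(|\nabla f|^2)=2\nabla^2 f(\nabla f,Y)=2\lambda g(\nabla f,Y)-2\mathrm{Ric}(\nabla f,Y)=0$, where the first equality uses the symmetry of $\nabla^2 f$, the second uses \eqref{fundeq}, and the third uses item 1. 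Hence $|\nabla f|$ is constant on each connected component of $\Sigma_c$. Since this means $|\nabla f|=\varphi(f)$ for a smooth function $\varphi$ of one variable near a regular point, item 3 follows by setting $s=\Phi(f)$ with $\Phi'=1/\varphi$; the identity $\nabla s=E_1$ is then immediate from $g(\nabla s,X)=ds(X)=df(X)/|\nabla f|=g(E_1,X)$.

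For item 6 I would expand $\nabla_{E_1}\nabla f=E_1(|\nabla f|)E_1+|\nabla f|\nabla_{E_1}E_1$ via the Leibniz rule. Pairing with an arbitrary $Y\perp E_1$ gives, on one side, $g(\nabla_{E_1}\nabla f,Y)=\nabla^2 f(E_1,Y)=\lambda g(E_1,Y)-\mathrm{Ric}(E_1,Y)=0$ using \eqref{fundeq} together with item 1, and on the other side $|\nabla f|\,g(\nabla_{E_1}E_1,Y)$. Hence $\nabla_{E_1}E_1$ has no component transverse to $E_1$, while $g(\nabla_{E_1}E_1,E_1)=\tfrac12 E_1|E_1|^2=0$; the two combine to give $\nabla_{E_1}E_1=0$. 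Item 4 then follows at once: $\nabla^2 f(E_1,E_1)=g(\nabla_{E_1}\nabla f,E_1)=E_1(|\nabla f|)=E_1E_1 f$ (using $E_1 f=|\nabla f|$), and \eqref{fundeq} turns this into $E_1E_1 f=\lambda-\mathrm{Ric}(E_1,E_1)$. The ``in particular'' assertion uses $|\nabla f|=\varphi(f)$, which forces $E_1(|\nabla f|)=\varphi'(f)|\nabla f|$ to depend only on $f$, hence to be constant on $\Sigma_c$.

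Finally, for item 5, the integral curves of $E_1=\nabla s$ are unit-speed geodesics by item 6, and they are everywhere orthogonal to the level sets of $s$ (which coincide with those of $f$). Introducing local coordinates $(x_2,\ldots,x_n)$ on $\Sigma_c$ and propagating them along the flow of $E_1$ yields a Fermi/Gaussian normal coordinate system $(s,x_2,\ldots,x_n)$ in a tubular neighborhood of $p\in\Sigma_c$. In these coordinates $\partial_s=E_1$, so $g(\partial_s,\partial_s)=1$; the standard Gauss-lemma argument shows $g(\partial_s,\partial_{x_i})\equiv 0$, producing the claimed form of $g$. The main obstacle I anticipate is the passage through item 1: extracting the right scalar identity from Lemma \ref{bari} requires a careful choice of arguments, but once it is isolated, all remaining items reduce to routine tensor calculations combined with the soliton equation.
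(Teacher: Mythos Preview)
Your proof is correct and complete. The paper does not supply its own proof of this lemma---it is quoted verbatim as Lemma~2.2 of \cite{kim1}---so there is nothing to compare against here; your argument follows the standard route (which is also the one in \cite{kim1}): extract $\mathrm{Ric}(\nabla f,Y)=0$ for $Y\perp\nabla f$ from the curvature identity of Lemma~\ref{bari}, then let the remaining items cascade from the soliton equation and elementary Riemannian geometry.
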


Since a manifold has harmonic Weyl curvature precisely when its Schouten tensor $A=\mathrm{Ric}-\frac{R}{2(n-1)}g$ is Codazzi, we can apply Derdzi\'{n}ski's \cite{derd} results. On the open dense set
\begin{align}
	M_{\mathcal{A}}=\{x\in M \ |  \ E_{\mathcal{A}}(x)\text{ is constant in a neighborhood of }x\};\nonumber
\end{align}
the eigenvalues of $A$ are well defined and the corresponding eigenfunctions are smooth. We write them as $\lambda_1,\dots,\lambda_n$ and use eigenframes adapted to $\mathrm{Ric}$ in what follows. The next lemma records the identities we shall need.

\begin{lemma}[Derdzi\'{n}ski]\label{derdlemma}
	Let $(M^n,g)$, $n\geq4$, be a Riemannian metric with harmonic Weyl curvature. Let $\{E_{i}\}_{i=1}^{n}$ be a local orthonormal frame such that $\mathrm{Ric}(E_{i},\cdot)=\lambda_{i}g(E_{i},\cdot)$. Then,
	\begin{enumerate}
		\item\label{derd1} For any $i,j,k\geq1$,
		\begin{align*}
			(\lambda_{j}-\lambda_{k})\left\langle\nabla_{E_{i}}E_{j},E_{k}\right\rangle+\nabla_{E_{i}}(\mathcal{A}(E_{j},E_{k}))=
			(\lambda_{i}-\lambda_{k})\left\langle\nabla_{E_{j}}E_{i},E_{k}\right\rangle+\nabla_{E_{j}}(\mathcal{A}(E_{k},E_{i})).
		\end{align*}
		\item If $k\neq i$ and $k\neq j$, then $(\lambda_{j}-\lambda_{k})\left\langle\nabla_{E_{i}}E_{j},E_{k}\right\rangle=(\lambda_{i}-\lambda_{k})\left\langle\nabla_{E_{j}}E_{i},E_{k}\right\rangle$.
		\item Given distinct eigenfunctions $\lambda$ and $\mu$ of $\mathcal{A}$ and local vector fields $U$ and $V$ such that $A(V)=\lambda V$ and $A(U)=\mu U$ with $|U|=1$, it holds that $V(\mu)=(\mu-\lambda)\left\langle\nabla_{U}U,V\right\rangle$.
		\item Each distribution $D_{\lambda_{i}}$, defined by $D_{\lambda_{i}}(p)=\{v\in T_{p} M \ | \ \mathrm{Ric}(v,\cdot)=\lambda_{i}g(v,\cdot)\}$, is integrable and its leaves are totally umbilical submanifolds of $M$.
	\end{enumerate}
\end{lemma}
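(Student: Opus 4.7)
\noindent The hypothesis of harmonic Weyl curvature is equivalent to the Schouten tensor $\mathcal{A} = \mathrm{Ric} - \frac{R}{2(n-1)}g$ being a Codazzi tensor, and since $\mathcal{A}$ differs from $\mathrm{Ric}$ only by a scalar multiple of $g$, the two tensors share the same eigenvectors. I will abuse notation and write $\lambda_i$ for the eigenvalues of either tensor; this is harmless since all assertions in the lemma only involve differences of eigenvalues. Working inside $M_{\mathcal{A}}$, the multiplicities are locally constant, so the eigenvalues are smooth functions and the eigendistributions are smooth.

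For Item \ref{derd1}, I would proceed by direct expansion. Starting from $(\nabla_{E_i}\mathcal{A})(E_j, E_k) = E_i(\mathcal{A}(E_j, E_k)) - \mathcal{A}(\nabla_{E_i}E_j, E_k) - \mathcal{A}(E_j, \nabla_{E_i}E_k)$, using $\mathcal{A}(E_m, \cdot) = \lambda_m g(E_m, \cdot)$ and the orthonormality identity $\langle \nabla_{E_i}E_j, E_k\rangle + \langle E_j, \nabla_{E_i} E_k\rangle = 0$, the two connection-bearing terms combine to $(\lambda_j - \lambda_k)\langle \nabla_{E_i} E_j, E_k \rangle$. The Codazzi equation $(\nabla_{E_i}\mathcal{A})(E_j, E_k) = (\nabla_{E_j}\mathcal{A})(E_i, E_k)$ then delivers the asserted identity. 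Item 2 follows at once: when $k \neq i$ and $k \neq j$, both $\mathcal{A}(E_j, E_k) = \lambda_j \delta_{jk}$ and $\mathcal{A}(E_k, E_i) = \lambda_i \delta_{ki}$ vanish identically in a neighborhood, so the derivative terms disappear. For Item 3, I would apply Codazzi in the form $(\nabla_V \mathcal{A})(U, U) = (\nabla_U \mathcal{A})(V, U)$. The left side reduces to $V(\mu)$ because $|U|=1$ kills $\langle \nabla_V U, U\rangle$; the right side, via $\mathcal{A}(V, U) \equiv 0$ (local orthogonality of eigenspaces for distinct eigenvalues) together with $\langle \nabla_U V, U\rangle = -\langle V, \nabla_U U\rangle$, collapses to $(\mu - \lambda)\langle \nabla_U U, V\rangle$.

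Item 4 is the geometric payoff, and I would deduce it from Items 1 and 2. For involutivity of $D_{\lambda}$, pick $E_j, E_k$ tangent with common eigenvalue $\lambda$ and $E_l$ normal with eigenvalue $\mu \neq \lambda$; Item 2 forces $\langle \nabla_{E_j} E_k, E_l\rangle = \langle \nabla_{E_k} E_j, E_l\rangle$, hence $\langle [E_j, E_k], E_l\rangle = 0$ and Frobenius yields the leaves. For total umbilicity, Item 2 applied with $i = l$ normal and $j \neq k$ tangent gives $\langle \nabla_{E_j} E_l, E_k\rangle = 0$, which by the orthogonality of $E_k$ and $E_l$ yields $\langle \nabla_{E_j} E_k, E_l\rangle = 0$. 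The diagonal entry $\langle \nabla_{E_j} E_j, E_l\rangle$ is then computed by specializing Item 1 to indices $(i, j, k) = (l, j, j)$, producing $\langle \nabla_{E_j} E_j, E_l\rangle = -E_l(\lambda)/(\mu - \lambda)$, a value independent of the particular $E_j \in D_\lambda$. Together these show the second fundamental form in any normal direction is a scalar multiple of the induced metric, which is umbilicity. The main care point in this last step is the bookkeeping with eigenspaces of multiplicity greater than one, but on $M_{\mathcal{A}}$ the smoothness and local constancy of multiplicities render the argument routine.
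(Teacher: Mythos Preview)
Your proof is correct. The paper does not actually supply a proof of this lemma; it is stated as a citation of Derdzi\'{n}ski's results in \cite{derd} and used as a black box. Your argument is the standard one and would serve perfectly well as the omitted proof: the Codazzi property of the Schouten tensor plus direct expansion yields Item~1, Items~2 and~3 are specializations, and the integrability and umbilicity in Item~4 follow exactly as you describe.
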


\subsection{Multiply warped products}

Since we will repeatedly use multiply warped product geometries, we recall their definition here. We write
\begin{equation}
	M = B \times_{h_1} N_1^{r_1} \times \cdots \times_{h_k} N_k^{r_k}, \qquad
	g_M = g_B + h_1^2 g_{N_1} + \cdots + h_k^2 g_{N_k};\nonumber
\end{equation}
and call $B$ the base, $N_i^{r_i}$ the fibers, and $h_i$ the warping functions. The particular case $k=1$ is the classical warped product; a standard reference on this is \cite{oneill}.

\begin{remark}[Counting fibers]\label{grmnt_wrpngfnctn}
	To count the fibers in a multiply warped product, we adopt the same conventions used in our earlier work on Ricci solitons with harmonic Weyl curvature \cite{brgs2025}. Specifically, a fiber with a constant warping function is absorbed into the base, and two factors whose warping functions differ by a positive constant multiple are identified after rescaling the corresponding fiber metric. Consequently, each $h_i$ is nonconstant, and the family $\{h_i\}_{1 \leq i \leq k}$ consists of pairwise nonproportional functions. The \textbf{number of fibers} is defined as the number of equivalence classes of warping functions under the equivalence relation $f \sim c\,g$ for $c > 0$.
\end{remark}

Metrics of this type appear throughout the literature \cite{bishop,brovaz,choui,danwan,derd2} and in many natural constructions \cite{caoqian,cccmm,caozhou,catino0,kerbrat,kim1,kim2,kim3,feng,shin}.

For later use, we shall now record the Levi-Civita connection, Ricci tensor, and scalar curvature of $M$ in terms of the geometry of $B$, the geometries of the fibers $N_i$, and the warping functions $h_i$. The following lemma compiles the formulas we will need.

\begin{lemma}[\cite{DobUn}]\label{warped}
	Let $M = B\times_{h_1} N_1^{r_1} \times \cdots \times_{h_k} N_{k}^{r_k}$ be a multiply warped product, and consider $X, Y, Z \in \mathcal{L}(B)$, $V \in \mathcal{L}(N_i)$ and $W \in \mathcal{L}(N_j)$ lifted vector fields. Then:
	
	\begin{enumerate}
		\item The covariant derivative of $M$ satisfies the following relations
		\begin{align*}
			&\nabla_X Y =\nabla^{B}_X Y ,\\
			&\nabla_X V = \nabla_V X = \frac{X(h_i)}{h_i} V,\\
			&\nabla_W V = 0,\ \ \text{if}\ \ i\neq j,\\
			&\nabla_W V = \nabla^{N_{i}}_W V-h_{i}g_{N_{i}}(W,V)\nabla^{B}h_{i},\ \ \text{if}\ \ i=j.
		\end{align*}
		
		\item The Ricci tensor of $M$ is given by
		\begin{align*}
			&\mathrm{Ric}(X, Y) = \mathrm{Ric}_{B}(X,Y) - \sum_{1 \leq \ell \leq k} \frac{r_{\ell}}{h_{\ell}} \nabla^{2}_{B}h_{\ell}(X, Y),\\
			&\mathrm{Ric}(X, V) = 0,\\
			&\mathrm{Ric}(V, W) = 0, \text{ if } i \neq j,\\
                &\mathrm{Ric}(V, W) = \mathrm{Ric}_{N_i}(V, W) - \left(\frac{\Delta_B h_i}{h_i} + (r_i - 1) \frac{| \nabla_{B} h_i |^2}{h_i^2} + \sum_{\substack{1 \leq \ell \leq k \\
					\ell \neq i  \\
			}} r_{\ell} \frac{g_{B}(\nabla_{B} h_i, \nabla_{B} h_{\ell})}{h_i h_{\ell}} \right)g(V, W), \text{if } i = j.
		\end{align*}
\item The scalar curvature of $M$ is given by
		\begin{equation*}
			\begin{aligned}
				R = R_{B} - 2 \sum_{1 \leq i \leq k} r_i \frac{\Delta_{B} h_i}{h_i} + \sum_{1 \leq i \leq k} \frac{R_{N_i}}{h_i^2} - \sum_{1 \leq i \leq k} r_i(r_i - 1) \frac{| \nabla_{B} h_i |^2}{h_i^2}-\sum_{1 \leq i \leq k} \sum_{\substack{1 \leq \ell \leq k \\
						\ell \neq i  \\
				}} r_i r_{\ell} \frac{g_{B}(\nabla_{B} h_i, \nabla_{B} h_{\ell})}{h_i h_{\ell}}.
			\end{aligned}
		\end{equation*}
	\end{enumerate}
\end{lemma}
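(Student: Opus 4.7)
The plan is to verify these formulas via a direct computation extending the standard single warped product case (as in \cite{oneill}) to the multiply warped setting, compartmentalized by the type of vector fields involved.

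First, I would derive the Levi-Civita connection formulas from the Koszul formula. For lifted vector fields, the key inputs are: (i) the metric $g_M = g_B + \sum h_i^2 g_{N_i}$ pairs only like factors; (ii) brackets vanish between a lift from $B$ and a lift from $N_i$, and between lifts from distinct fibers $N_i$ and $N_j$; and (iii) within $N_i$, the metric is conformally rescaled by $h_i^2$. Koszul's formula then reduces base-base to $\nabla^B_X Y$, and for mixed terms $\nabla_X V$ the contribution $X(h_i^2) g_{N_i}(V,W)/2 = h_i X(h_i) g_{N_i}(V,W)$ divided by $h_i^2 g_{N_i}(V,V)$-type normalizations yields $X(h_i)/h_i \cdot V$. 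For $\nabla_W V$ with $V,W$ in distinct fibers the relevant Koszul terms cancel; for the same fiber, the intrinsic part gives $\nabla^{N_i}_W V$ and the extrinsic $B$-component produces the term $-h_i g_{N_i}(W,V)\nabla^B h_i$.

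Second, with the connection in hand, I would compute the curvature tensor by direct substitution in $R(X,Y)Z = \nabla_X\nabla_Y Z - \nabla_Y \nabla_X Z - \nabla_{[X,Y]} Z$, splitting into cases according to which factor each argument belongs to, and then trace to obtain the Ricci tensor. The base-base component $\mathrm{Ric}(X,Y)$ picks up $-\sum_\ell (r_\ell/h_\ell)\nabla^2_B h_\ell(X,Y)$ from the $r_\ell$ vertical directions in each fiber, each contributing a Hessian term via $\nabla_X \nabla_V Y$ type computations. Mixed components $\mathrm{Ric}(X,V)$ vanish since all contributions are antisymmetric in the base/fiber split under trace. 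Within a fiber, $\mathrm{Ric}(V,W)$ inherits $\mathrm{Ric}_{N_i}(V,W)$ and collects three further contributions: the base Laplacian $\Delta_B h_i/h_i$ from the $B$-trace, the horizontal self-interaction $(r_i-1)|\nabla_B h_i|^2/h_i^2$ from the other $r_i-1$ directions in $N_i$, and the cross-fiber couplings $\sum_{\ell\neq i} r_\ell g_B(\nabla_B h_i,\nabla_B h_\ell)/(h_i h_\ell)$.

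Third, the scalar curvature is obtained by tracing Ricci against the metric. The base contributes $R_B$, each fiber contributes $R_{N_i}/h_i^2$ (the factor $1/h_i^2$ accounting for the conformal rescaling), and the Hessian traces collapse to $\Delta_B h_i/h_i$. The fiber-fiber terms contribute the remaining gradient and cross-gradient sums, with combinatorial factors $r_i(r_i-1)$ and $r_i r_\ell$ reflecting the multiplicities.

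The main obstacle is purely bookkeeping: keeping track of signs, multiplicities $r_i$, and cross-terms $g_B(\nabla_B h_i,\nabla_B h_\ell)$ between different warping functions. No conceptual novelty is required; the formulas follow by the same argument as in the single warped product case, with careful attention to the interactions between distinct fiber factors.
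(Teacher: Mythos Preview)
Your proposal is correct and follows the standard derivation via the Koszul formula and direct curvature computation; this is precisely how such formulas are obtained in the literature (e.g., in \cite{oneill} for a single fiber and in \cite{DobUn} for the multiply warped case). Note, however, that the paper does not supply its own proof of this lemma: it is stated as a quotation from \cite{DobUn}, so there is nothing to compare against beyond observing that your outline matches the expected computation.
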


\section{Gradient almost solitons whose Cotton tensor vanishes}\label{cotton_results}
\hspace{.5cm}The goal of this section is to obtain a local representation of $M$ as a multiply warped product. This is the content of Theorem \ref{decompwarp}. Next, we estimate the number of eigenvalues of the Ricci tensor of a gradient almost Ricci soliton with harmonic Weyl tensor. This is obtained in Corollary \ref{numbereigen}. As a consequence of this estimate, we conclude that at most two fibers can appear in this local representation, in the sense of Remark \ref{grmnt_wrpngfnctn}. These results prove both Theorem \ref{maxxnumeig-INTRO} and Theorem \ref{decompwarpint}.

\subsection{$R$, $\lambda$ and the eigenvalues of $\mathrm{Ric}$ depend only on $s$}\label{subsec_dep_s}

\hspace{.5cm}In this subsection, we show that using the system of coordinates of item \eqref{cao5} of Lemma \ref{caochen}, certain important functions depend only on $s$, the arc length of the integral curve of $\frac{\nabla f}{\vert\nabla f\vert}$.

First, we deal with the set of regular points of the potential function of the gradient almost Ricci soliton $M$. Suppose that it is nonconstant and consider the set $\mathcal{R}$ of all regular points of $f$,
\begin{align*}
	\mathcal{R}=\{x\in M \ | \ \nabla f(x)\neq0\}.
\end{align*}
Unlike in the gradient Ricci soliton case, where both $g$ and $f$ are real analytic in harmonic coordinates, a gradient almost Ricci soliton with harmonic Weyl curvature need not be analytic. Consequently, one cannot rely on analyticity to control the distribution of regular points of $f$. In what follows, we address this issue directly. Namely, we prove that on each connected component $W$ of the open and dense subset $M_{\mathcal A}$, the behavior of the regular set follows a clear alternative: either every point of $W$ is arbitrarily close to a regular point of $f$, or $f$ is constant in $W$. 
\begin{lemma}\label{denseempty}
	Let $(M^{n},g,f,\lambda)$ be a gradient almost Ricci soliton with harmonic Weyl curvature. If $f$ is not constant and $W$ is a connected component of $M_{\mathcal{A}}$, then either $W\cap\mathcal{R}$ is dense in $W$, or it is empty.
\end{lemma}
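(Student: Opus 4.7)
I would argue by contradiction: suppose $W \cap \mathcal{R}$ is nonempty but not dense in $W$, and derive that this forces $f$ to be constant on $W$, contradicting $W \cap \mathcal{R} \neq \emptyset$. Set $V := \mathrm{int}_W(W \setminus \mathcal{R})$, so the non-density hypothesis yields $V \neq \emptyset$; fix $p_0$ in a connected component of $V$ and put $f_0 := f(p_0)$, so that $f \equiv f_0$ and $\nabla f \equiv 0$ on that component.

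The first step is to show that $W$ is Einstein with constant Einstein constant. On $V$, $\nabla^2 f \equiv 0$, so \eqref{fundeq} yields $\mathrm{Ric} = \lambda g$ there; in particular $\mathrm{Ric}$ has exactly one distinct eigenvalue at every point of $V$. Since $W$ is a connected component of $M_\mathcal{A}$, the number of distinct eigenvalues of $\mathrm{Ric}$ and their multiplicities are constant on $W$, so $\mathrm{Ric} = \mu(x)\, g$ throughout $W$. The contracted second Bianchi identity applied to this gives $(n-2)\, d\mu = 0$, and since $n \geq 4$, $\mu$ is a constant $\mu_0$ on the connected set $W$.

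The second step is to reduce the Hessian equation on $W$ to a scalar linear ODE. Setting $\psi := \lambda - \mu_0$, equation \eqref{fundeq} reads $\nabla^2 f = \psi\, g$ on $W$. The standard identity $(n-1)\nabla\psi = -\mathrm{Ric}(\nabla f, \cdot)$, derived by tracing the Ricci identity applied to $\nabla_k f_{ij} = (\nabla_k\psi)\, g_{ij}$, specializes here to $(n-1)\nabla\psi = -\mu_0\,\nabla f$. If $\mu_0 \neq 0$, integration on connected $W$ yields $\psi = -\tfrac{\mu_0}{n-1}\, f + c_0$ for a constant $c_0$; the compatibility $\nabla^2 f \equiv 0$ on $V$ forces $-\tfrac{\mu_0}{n-1}\, f_0 + c_0 = 0$, so the constant $f_0$ is itself a solution of $\nabla^2 u = \bigl(-\tfrac{\mu_0}{n-1}\, u + c_0\bigr) g$. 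If $\mu_0 = 0$, then $\psi$ is constant on $W$ and vanishes on $V$, giving $\nabla^2 f \equiv 0$ on $W$ and $\nabla f$ parallel.

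The final step propagates $f \equiv f_0$ and $\nabla f \equiv 0$ from $V$ to all of $W$. For $\mu_0 \neq 0$, the set $S := \{p \in W : f(p) = f_0,\ \nabla f(p) = 0\}$ will equal $W$ by an open-closed argument: at any $q \in S$ the ODE $h'' = -\tfrac{\mu_0}{n-1}\, h + c_0$ satisfied by $h(t) := f(\gamma(t))$ along a unit-speed geodesic $\gamma$ from $q$ has $h(0) = f_0$, $h'(0) = 0$, and the constant $f_0$ as a solution, so uniqueness gives $f \equiv f_0$ and $\nabla f \equiv 0$ in a normal neighborhood of $q$; thus $S$ is open, closed by continuity, and nonempty (since $S \supset V$), hence $S = W$ by connectedness. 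For $\mu_0 = 0$, the parallel field $\nabla f$ vanishes on $V$ and therefore on connected $W$. Either way, $\nabla f \equiv 0$ on $W$, contradicting $W \cap \mathcal{R} \neq \emptyset$. The hardest part will be exactly this propagation: because almost Ricci solitons with harmonic Weyl tensor need not be real analytic (as flagged immediately before the lemma), one cannot continue $f \equiv f_0$ from $V$ to $W$ by analytic continuation; the eigenvalue rigidity on $W$ is precisely what forces the Einstein condition that in turn makes $\psi$ an affine function of $f$ and enables the linear-ODE argument (equivalently, Aronszajn's unique continuation for the linear elliptic equation $\Delta f + \tfrac{n\mu_0}{n-1}\, f = n c_0$ satisfied by $f$ on $W$).
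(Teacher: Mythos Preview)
Your proof is correct and shares the paper's overall architecture: both argue by contradiction, use the constancy of the eigenvalue count on $W$ to force $\mathrm{Ric}=\mu_0 g$ with $\mu_0$ constant, and then derive that $\psi=\lambda-\mu_0$ is an affine function of $f$, namely $\psi=-\tfrac{\mu_0}{n-1}f+c_0$. The genuine difference lies in the propagation step. The paper observes that $f$ then satisfies a linear elliptic equation (an eigenfunction equation for $-\Delta$ when $\mu_0\neq0$, or the Ricci soliton system when $\mu_0=0$), invokes real-analyticity of Einstein metrics and of solutions to such equations, and concludes by analytic continuation that $f$ is constant on $W$. You instead run an open--closed argument directly: along unit geodesics from any point of $S=\{f=f_0,\ \nabla f=0\}$, the restriction $h=f\circ\gamma$ solves the linear second-order ODE $h''=-\tfrac{\mu_0}{n-1}h+c_0$ with initial data matching the constant solution $f_0$, so uniqueness spreads $S$ to all of $W$. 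Your route is more elementary in that it avoids the DeTurck--Kazdan regularity theory and elliptic analyticity altogether; the paper's route is shorter to state once one is willing to cite those results. Both approaches exploit exactly the same structural input---the eigenvalue rigidity on $W$ forcing the Einstein condition---which, as you correctly emphasize, is what substitutes for the unavailable a priori analyticity of almost Ricci solitons.
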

\begin{proof}
	Let $W$ be a connected component of $M_{\mathcal{A}}$ such that $W'=W\cap\mathcal{R}\neq\emptyset$. We will show that $W'$ is dense in $W$. Suppose by contradiction that this is not true and consider an open set $U\subset W\backslash W'$. Once $\nabla f$ vanishes in $U$, the almost Ricci soliton equation becomes $\mathrm{Ric}=\lambda g$ in $U$. As the number of eigenvalues of $\mathrm{Ric}$ is constant in $W$, it follows that $M$ is Einstein in this set, i.e., there is $\mu\in\mathbb{R}$ such that $\mathrm{Ric}=\mu g$ in $W$. In particular, $\nabla^2 f=(\lambda-\mu)g$ in $W$. Using the Bochner formula we obtain
	\begin{align*}
		X(\lambda)=\operatorname{div}(\nabla^2 f)(X)=\mathrm{Ric}(\nabla f,X)+X(\Delta f)=X(\mu f+n\lambda),\ \forall X\in\mathfrak{X}(W),
	\end{align*}
	which implies the existence of $c_{0}\in\mathbb{R}$ such that $\displaystyle\lambda=-\frac{\mu}{n-1}f+\mu+c_{0}$. Consequently,
	\begin{align*}
		\nabla^2 f=\left(-\frac{\mu}{n-1}f+c_{0}\right)g,\ \text{in}\ W.
	\end{align*}
	Now observe that if $\mu\neq0$, then $\displaystyle f-\frac{(n-1)c_{0}}{\mu}$ is an eigenfunction of $-\Delta$. If, on the other hand, $\mu=0$, then $\lambda$ is constant, $\mathrm{Ric}=0$ and $\nabla^2 f=\lambda g$, which means that $(W,g|_{W},f,\lambda)$ is a Ricci soliton. In both cases we conclude that $f$ is analytic in $W$. As $\nabla f$ vanishes in $U$, $f$ is constant in $W$, contradicting our assumption.
	
	This means that $W\backslash W'$ has empty interior or, equivalently, that the set $W'$ is dense in $W$.
\end{proof}

In the next lemma we prove only that $\lambda$ is constant on each connected component of a regular level $\Sigma_{c}$ of $f$, and hence $\lambda$ depends only on $s$ (cf. item \eqref{cao5} of Lemma \ref{caochen}). The corresponding statements for $R$ and for the first eigenvalue $\lambda_{1}$ have already been established: first for four-dimensional Ricci solitons in \cite{kim1}, and then for four-dimensional almost Ricci solitons in \cite{kim2}; one can easily check that the proof in \cite{kim2} applies in all dimensions.

\begin{lemma}\label{lemma3depend}
	Let $(M^n,g,f,\lambda)$, $n\geq4$, be a gradient almost Ricci soliton with harmonic Weyl curvature and nonconstant $f$. Then $R$, $\lambda$ and $\mathrm{Ric}(E_{1},E_{1})=\lambda_{1}$ are constant on a connected component of $\Sigma_{c}$, for a regular value $c$ of $f$.
\end{lemma}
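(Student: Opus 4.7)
As the excerpt points out, the constancy of $R$ and of $\lambda_{1}=\mathrm{Ric}(E_{1},E_{1})$ on connected components of a regular level set $\Sigma_{c}$ has already been established in \cite{kim1,kim2} for $n=4$; a careful reading of those arguments shows that no feature of dimension four is actually used, so both statements can be transported verbatim to the general case. My plan is therefore to focus on the only genuinely new assertion, namely that $\lambda$ itself is constant on connected components of $\Sigma_{c}$.

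The idea is to take the divergence of the fundamental equation \eqref{fundeq}. Applying the contracted second Bianchi identity $\operatorname{div}(\mathrm{Ric})=\tfrac{1}{2}dR$ and the standard commutator formula $\operatorname{div}(\nabla^{2}f)(X)=X(\Delta f)+\mathrm{Ric}(\nabla f,X)$, and then substituting the trace $\Delta f=n\lambda-R$ of \eqref{fundeq}, I expect to arrive at the identity
\[
2(n-1)\,\nabla\lambda \;=\; \nabla R - 2\,\mathrm{Ric}(\nabla f).
\]
To finish, I would invoke item (1) of Lemma \ref{caochen}, which gives $\mathrm{Ric}(\nabla f)=\lambda_{1}\,|\nabla f|\,E_{1}$ on the regular set $\mathcal{R}$, together with the already-known constancy of $R$ on components of $\Sigma_{c}$, which forces $\nabla R$ to be a multiple of $E_{1}=\nabla s$. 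Both terms on the right-hand side are thus parallel to $E_{1}$, and so $X(\lambda)=0$ for every $X$ tangent to $\Sigma_{c}$; this yields the claim.

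The main obstacle is really bookkeeping rather than substance. I need the identity to hold on an open subset of $\mathcal{R}\cap M_{\mathcal{A}}$ where the eigenframe of $\mathrm{Ric}$ is smooth and Lemma \ref{caochen} applies, and then use Lemma \ref{denseempty} together with continuity of $\lambda$ along a connected component of $\Sigma_{c}$ to propagate constancy across the possibly non-regular strata of $M_{\mathcal{A}}$. No further use of harmonicity of $W$ beyond what was already built into Lemma \ref{caochen} should be necessary.
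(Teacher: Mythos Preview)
Your argument is correct: the divergence identity $2(n-1)\nabla\lambda=\nabla R-2\,\mathrm{Ric}(\nabla f)$ follows exactly as you say, and on the regular set item (1) of Lemma \ref{caochen} gives $\mathrm{Ric}(\nabla f)=\lambda_{1}\nabla f$, so both right-hand terms are proportional to $E_{1}$ once $R$ is known to be constant along $\Sigma_{c}$. Your concern about $M_{\mathcal{A}}$ and Lemma \ref{denseempty} is unnecessary here, since the identity is global and item (1) of Lemma \ref{caochen} holds on all of $\mathcal{R}$, not only on $M_{\mathcal{A}}$; the statement is about a regular level set, so you are already in $\mathcal{R}$.

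That said, the paper's route is shorter and avoids the divergence computation altogether. It simply invokes item \eqref{cao4} of Lemma \ref{caochen}, which already records that $\lambda-\lambda_{1}=E_{1}E_{1}f=f''$ is constant on connected components of $\Sigma_{c}$ (being a function of $s$ alone). Since $\lambda_{1}$ is also constant there, one writes $\lambda=(\lambda-\lambda_{1})+\lambda_{1}$ and is done in one line. Your approach has the mild advantage of producing an identity that is occasionally useful elsewhere, but for this lemma the paper's argument is the more economical one: it extracts the result directly from facts already recorded in Lemma \ref{caochen}, with no further differentiation.
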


\begin{proof}
	We already know that $\lambda-\lambda_{1}$ depends only on $s$, as established by item $(\ref{cao4})$ of Lemma \ref{caochen}, which ensures that $\lambda-\lambda_{1}=f''$. Since $\lambda_{1}$ also depends only on $s$ and we can write
	\begin{align*}
		\lambda&=(\lambda-\lambda_{1})+\lambda_{1},
	\end{align*}
	we see that $\lambda$ depends only on $s$ as well, and the lemma is proved.
\end{proof}

As in the case of gradient Ricci solitons, the eigenvalues of the Ricci tensor of a gradient almost Ricci soliton depend only on $s$. In dimension $n=4$, this result was obtained by Kim in \cite{kim2}. A proof can be obtained by simply adapting the arguments in the proof of \cite[Lemma 8]{brgs2025}, which are inspired by those in \cite[Lemma 3]{shin}. We state this fact in the following lemma, whose proof we omit.
\begin{lemma}
	Let $(M^n,g,f,\lambda)$, $n\geq4$, be a gradient almost Ricci soliton with harmonic Weyl curvature and nonconstant $f$. The functions $\lambda_{2},\ldots,\lambda_{n}$ are constant on each connected component of $\Sigma_{c}$. As a consequence, the functions $\xi_{2},\ldots,\xi_{n}$ are also constant on each connected component of $\Sigma_{c}$.
\end{lemma}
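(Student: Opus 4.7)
The plan is to work in an eigenframe of $\mathrm{Ric}$ adapted to the open dense set $M_{\mathcal A}$ and show that $E_i(\lambda_j)=0$ for every $i\geq 2$ and every $j\in\{2,\ldots,n\}$; the statement for the $\xi_j$ will then be an immediate consequence of the formula $\xi_j=(\lambda-\lambda_j)/f'$ together with Lemma~\ref{lemma3depend}. I would begin by fixing an orthonormal eigenframe $\{E_1,\ldots,E_n\}$ of $\mathrm{Ric}$ on a connected component of $M_{\mathcal A}$ meeting $\mathcal R$, with $E_1=\nabla f/|\nabla f|$ as in Lemma~\ref{caochen}. The almost soliton equation combined with $\nabla_{E_1}E_1=0$ gives $\nabla_{E_j}E_1=\xi_j E_j$ for $j\geq 2$, with $\xi_j=(\lambda-\lambda_j)/f'$; in particular the second fundamental form of $\Sigma_c$ is diagonal in this frame, and its mean curvature $H=((n-1)\lambda-(R-\lambda_1))/f'$ depends only on $s$ by Lemma~\ref{lemma3depend}. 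Applying Bochner to $|\nabla f|^2$, whose Laplacian $\Delta|\nabla f|^2=2(f'')^2+2f'f'''+2f'f''H$ is then visibly $s$-dependent, and using that $\langle\nabla\Delta f,\nabla f\rangle=(n\lambda'-R')f'$ and $\mathrm{Ric}(\nabla f,\nabla f)=\lambda_1(f')^2$ also depend only on $s$, one concludes that $|\nabla^2 f|^2=(f'')^2+\sum_{j\geq 2}(\lambda-\lambda_j)^2$ depends only on $s$, and hence so does $|\mathrm{Ric}|^2$.

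Unpacking the Codazzi property of the Schouten tensor (equivalent to harmonic Weyl by Lemma~\ref{bari}) in the eigenframe yields, for every $i\geq 2$, the identity
\[
E_i(\lambda_j)=(\lambda_j-\lambda_i)\,\langle\nabla_{E_j}E_j,E_i\rangle.
\]
When $i\neq j$ and $\lambda_i=\lambda_j$, both sides vanish identically as a direct consequence of Lemma~\ref{derdlemma} item 1. When $\lambda_i\neq\lambda_j$, the umbilicity of the eigendistribution $D_{\lambda_j}$ (Lemma~\ref{derdlemma}) identifies $\langle\nabla_{E_j}E_j,E_i\rangle$ with the component of the mean curvature vector $H_{D_{\lambda_j}}$ of its leaves along $E_i$, so the remaining task is to show that $H_{D_{\lambda_j}}$ is collinear with $E_1$ whenever $\lambda_j\neq\lambda_1$.

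Adapting the argument of \cite[Lemma~8]{brgs2025}, I would produce enough constraints to force $\langle H_{D_{\mu_a}},E_i\rangle=0$ for every distinct eigenvalue $\mu_a$ of $\mathrm{Ric}$ with $\mu_a\notin\{\lambda_1,\lambda_i\}$. This can be achieved by iterating the Bochner computation above to show that each higher power sum $\sum_j\lambda_j^k$ is a function of $s$ alone, then differentiating these identities in $E_i$ with $i\geq 2$ and inserting the formula above to obtain a Vandermonde-type linear system in the $\langle H_{D_{\mu_a}},E_i\rangle$; its resolution produces the desired vanishing, whence $E_i(\lambda_j)=0$ for all $i,j\geq 2$. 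The main obstacle is precisely this iteration: in the Ricci soliton analysis of \cite{brgs2025} many terms simplify because $\lambda$ is constant, while in the present almost soliton setting the extra terms coming from $\nabla\lambda=\lambda' E_1$ must be absorbed through repeated application of Lemma~\ref{lemma3depend}.
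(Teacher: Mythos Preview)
Your proposal is correct and follows essentially the same approach the paper points to: the paper omits the proof and says it is obtained ``by simply adapting the arguments in the proof of \cite[Lemma 8]{brgs2025}, which are inspired by those in \cite[Lemma 3]{shin}'', and you are outlining precisely that adaptation---Bochner to get $|\mathrm{Ric}|^2$ as a function of $s$, Derdzi\'nski's identity $E_i(\lambda_j)=(\lambda_j-\lambda_i)\langle\nabla_{E_j}E_j,E_i\rangle$, umbilicity of the eigendistributions, and the Vandermonde argument on the power sums. The only point worth flagging is that your iteration step (getting the higher $\sum_j\lambda_j^k$ to depend on $s$ alone) is stated but not carried out; since this is exactly where the non-constancy of $\lambda$ enters, you should check explicitly that the extra $\lambda'$-terms produced at each stage are absorbed by Lemma~\ref{lemma3depend}, as you anticipate.
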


\subsection{Local multiply warped product structure}\label{der_ivey_ex}

\hspace{.5cm}In this subsection, we note that gradient almost Ricci solitons with zero Cotton tensor are multiply warped products around regular points of the potential function. We also establish that the number of fibers in the corresponding representation is at most two. \emph{Mutatis mutandis}, the same strategies we used in \cite{brgs2025} apply directly to our setting. Therefore, we have chosen to state the results without proofs.

\begin{theorem}\label{decompwarp}
Any gradient almost Ricci soliton with harmonic Weyl curvature is locally a multiply warped product $I\times_{h_1} N_1^{r_1}\times\cdots\times_{h_k} N_{k}^{r_k}$ of a one-dimensional base $I$ and $k$ fibers $N_{i}^{r_{i}}$. Furthermore, each fiber $N_{i}^{r_{i}}$ of dimension $r_{i}\geq2$ must be Einstein.
\end{theorem}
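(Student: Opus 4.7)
The plan is to work locally near a regular point $p$ of $f$ inside the open dense set $M_{\mathcal{A}}$, and to produce the warped decomposition by exhibiting an orthogonal net of distributions that satisfies the hypotheses of a Hiepko-type splitting theorem. Let $\mu_1, \ldots, \mu_k$ denote the distinct eigenvalues of $\mathrm{Ric}$ on $E_1^{\perp}$, with multiplicities $r_1, \ldots, r_k$ summing to $n-1$, and let $D_{\mu_i}$ be the corresponding eigendistributions. Together with $D_0 = \mathrm{span}(E_1)$, these give an orthogonal decomposition $TM = D_0 \oplus D_{\mu_1} \oplus \cdots \oplus D_{\mu_k}$ on a neighborhood of $p$.

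I would first verify the structural hypotheses for a multiply warped decomposition. The distribution $D_0$ is one-dimensional, integrable, and totally geodesic by item 6 of Lemma \ref{caochen}. Each $D_{\mu_i}$ is integrable with totally umbilical leaves by item 4 of Lemma \ref{derdlemma}. The key remaining step is to show that each $D_{\mu_i}$ is \emph{spherical}, meaning that its mean curvature vector is a multiple of $E_1$ whose coefficient depends only on $s$. Using Derdziński's identity in Lemma \ref{derdlemma}(3) together with Lemma \ref{lemma3depend} and its omitted analogue for the remaining eigenvalues, one finds that the mean curvature vector of each leaf of $D_{\mu_i}$ equals $-(\log h_i)'(s)\, E_1$ for a positive function $h_i(s)$; since $E_1 = \nabla s$, the associated mean curvature one-form is exact. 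Applying Hiepko's splitting theorem then yields a local isometry with $I \times_{h_1} N_1^{r_1} \times \cdots \times_{h_k} N_k^{r_k}$, the conventions of Remark \ref{grmnt_wrpngfnctn} absorbing any constant warping functions into the base and merging proportional ones.

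For the Einstein property of a fiber with $r_i \geq 2$, I would compare the identity $\mathrm{Ric}(V,W) = \mu_i(s)\, g(V,W)$, valid for $V, W \in D_{\mu_i}$, with the multiply warped Ricci formula in Lemma \ref{warped}(2). Isolating the intrinsic term yields
\begin{equation*}
    \mathrm{Ric}_{N_i}(V, W) = F_i(s)\, g_{N_i}(V, W),
\end{equation*}
with $F_i$ depending only on $s$. Since $\mathrm{Ric}_{N_i}$ and $g_{N_i}$ depend only on the fiber coordinates, $F_i$ must be constant, and hence $(N_i, g_{N_i})$ is Einstein.

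I expect the main obstacle to be verifying the spherical property of the $D_{\mu_i}$. Concretely, this amounts to showing $\langle \nabla_{E_1} V, W \rangle = 0$ whenever $V \in D_{\mu_i}$ and $W$ lies in a different eigenblock or along $E_1$, together with the fact that the proportionality factor of the mean curvature along $E_1$ depends on $s$ alone. Both points follow from Derdziński's Codazzi identities in Lemma \ref{derdlemma}(1)--(2) applied across eigenvalue blocks, combined with the vanishing of off-diagonal Schouten entries in an eigenframe and the already established fact that every Ricci eigenvalue is a function of $s$ only.
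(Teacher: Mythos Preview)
Your approach is correct and coincides with the standard strategy the paper defers to (the proof is omitted here, with a pointer to \cite{brgs2025}): decompose $TM$ into the Ricci eigendistributions, use Derdzi\'nski's Codazzi identities together with the $s$-dependence of the eigenvalues to verify integrability, umbilicity, and sphericity, then apply a Hiepko/Ponge--Reckziegel type splitting and read off the Einstein condition on the fibers from Lemma~\ref{warped}. One small refinement: the key point for sphericity is not $\langle \nabla_{E_1} V, W\rangle = 0$ but rather $\nabla_V E_1 \in D_{\mu_i}$ for $V \in D_{\mu_i}$, which follows directly from the soliton equation via $\nabla_V \nabla f = (\lambda - \mu_i) V$ and gives $\nabla_V E_1 = \xi_i V$---exactly the relation \eqref{eqXiRelFuncWarp} the paper records immediately after the theorem.
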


\begin{remark}
	As a straightforward consequence of the proof of Theorem \ref{decompwarp}, we get 
	\begin{equation}\label{eqXiRelFuncWarp}
		\frac{\lambda - \lambda_i}{f'} = \xi_i = \frac{h_{i}'}{h_i}
	\end{equation}
	This equation will be useful later on.
\end{remark}
We shall now estimate the number of fibers in the local representation given by Theorem \ref{decompwarp}. It will be convenient to introduce the functions
    \begin{align}\label{def_B_C}
        B=\frac{(n-1)\lambda-R+\lambda_{1}-(f')^2}{f'}\ \ \ \text{and}\ \ \ C=\frac{2(n-1)\lambda'-R'}{2(n-1)f'}+\lambda.
    \end{align}
Following the same steps as in \cite{brgs2025}, we obtain from the multiply warped decomposition that, for each $i\in\{1,\ldots,k\}$,
    \begin{align}\label{first_scnd_Dg}
        \xi^{2}_{i}-B\xi_i-C=-(r_i-1) \frac{\mu_i}{h_i^2}.
    \end{align}
This equation allows us to give an alternative proof of \cite[Proposition 3.4]{kim1} and of \cite[Proposition 3.4]{kim2}, which establish an estimate for the number of distinct eigenvalues of the Ricci tensor of a gradient almost Ricci soliton $M^4$ with harmonic Weyl tensor.

\begin{proposition}\label{prop_n=4}
    If $n=4$, then $M$ has at most two fibers in the representation given by Theorem \ref{decompwarp}. Equivalently, the Ricci tensor of $M^4$ has at most three distinct eigenvalues.
\end{proposition}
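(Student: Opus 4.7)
My plan is to combine the polynomial identity \eqref{first_scnd_Dg} with the dimensional constraint $\sum_{i=1}^{k} r_i = n-1 = 3$ (since the base is one-dimensional and $n=4$) to force $k \leq 2$. A priori $k \in \{1,2,3\}$, so the whole task reduces to ruling out $k = 3$.

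Assuming $k=3$, necessarily $r_1 = r_2 = r_3 = 1$, and \eqref{first_scnd_Dg} reduces to the quadratic identity $\xi_i^2 - B\xi_i - C = 0$ for $i=1,2,3$. Since $B(s)$ and $C(s)$ depend only on $s$, at every $s$ the three values $\xi_1(s), \xi_2(s), \xi_3(s)$ are roots of one and the same quadratic, so at least two of them must coincide at $s$. The next step is to upgrade this pointwise coincidence to identical coincidence on an open subinterval. For $1 \leq i < j \leq 3$ set $A_{ij} = \{s \in I : \xi_i(s) = \xi_j(s)\}$; these are closed in $I$ (zero sets of continuous functions) and cover $I$, so by the Baire category theorem one of them contains an open subinterval $U$. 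On $U$, $\xi_i \equiv \xi_j$ translates into $(\log h_i - \log h_j)' \equiv 0$, hence $h_i$ and $h_j$ are proportional on $U$. Restricting the decomposition to $U$ then contradicts the pairwise non-proportionality required by Remark \ref{grmnt_wrpngfnctn}. Therefore $k \leq 2$.

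For the \emph{equivalently} clause, Lemma \ref{warped} shows that the Ricci tensor of a multiply warped product has exactly one eigenvalue in the $E_1$-direction and exactly one eigenvalue per fiber (with multiplicity $r_i$, as each fiber is Einstein when $r_i \geq 2$ and one-dimensional when $r_i = 1$), for a total of at most $1+k$ distinct eigenvalues. Combined with \eqref{eqXiRelFuncWarp}, which identifies non-proportionality of the $h_i$ with distinctness of the fiber eigenvalues, the bound $k \leq 2$ becomes equivalent to $\mathrm{Ric}$ having at most three distinct eigenvalues.

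The main obstacle I anticipate is the Baire-type upgrade step: the quadratic constraint only tells us that \emph{some} pair $(i,j)$ collapses at each $s$, and producing an open subinterval on which the \emph{same} pair collapses identically is what enables invoking the non-proportionality convention. Everything else is bookkeeping given the framework already developed in the paper.
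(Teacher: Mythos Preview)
Your proof is correct and follows the paper's intended route: when $n=4$ and $k=3$ each $r_i=1$, so \eqref{first_scnd_Dg} reduces to the monic quadratic $\xi_i^{2}-B\xi_i-C=0$, forcing at most two of the $\xi_i$ to be distinct. Your Baire step is sound but heavier than needed, since the decomposition of Theorem~\ref{decompwarp} is carried out on $M_{\mathcal A}$, where the Ricci eigenvalues---and hence, via \eqref{eqXiRelFuncWarp}, the functions $\xi_i$---are pairwise distinct at \emph{every} point, yielding the contradiction immediately without any pointwise-to-interval upgrade.
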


In what follows, we extend this argument to higher dimensions. Namely, we construct a nonzero polynomial of degree at most two, which has as roots the functions $\xi_{1},\ldots,\xi_{k}$.

\begin{lemma}\label{lem_ply_deg2_}
    The functions $\xi_{1},\ldots,\xi_{k}$ satisfy the following equation
    \begin{align}\label{poly_atmst2}
        B\xi_{i}^2+(B'+2\lambda)\xi_{i}+(C-\lambda)B+C'=0.
    \end{align}
    Furthermore, the polynomial in $\xi_{i}$ defined by the left-hand side of \eqref{poly_atmst2} is nontrivial.
\end{lemma}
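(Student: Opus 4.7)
The plan is to derive \eqref{poly_atmst2} by producing two independent expressions for $\xi_i''$ and equating them, after first extracting a clean first-order ODE for $\xi_i$ out of \eqref{first_scnd_Dg} and the fiber Ricci formula. The crucial intermediate identity is
\[
(r_i-1)\frac{\mu_i}{h_i^2}=\xi_i'+B\xi_i+\lambda,
\]
which I would obtain by combining the fiber Ricci formula from Lemma \ref{warped} with the relation \eqref{eqXiRelFuncWarp} and the identity $\sum_\ell r_\ell\xi_\ell=B+f'$ — the latter being an immediate consequence of the trace formula for $R$ and the definition of $B$. Equating the two expressions for $(r_i-1)\mu_i/h_i^2$ provided by this intermediate identity and by \eqref{first_scnd_Dg} eliminates the warping term and yields the $r_i$-independent ODE
\[
\xi_i'=C-\lambda-\xi_i^2. \qquad (\diamond)
\]

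I would then compute $\xi_i''$ in two different ways. Differentiating $(\diamond)$ in $s$ and re-substituting $\xi_i'$ from $(\diamond)$ gives $\xi_i''=2\xi_i^3-2(C-\lambda)\xi_i+C'-\lambda'$. On the other hand, since $\mu_i$ is a constant, $(\mu_i/h_i^2)'=-2\xi_i(\mu_i/h_i^2)$; differentiating the displayed intermediate identity and applying this yields the second expression $\xi_i''=-(B+2\xi_i)\xi_i'-(B'+2\lambda)\xi_i-2B\xi_i^2-\lambda'$. Equating both expressions, canceling the $2\xi_i^3$ and $-\lambda'$ terms, and substituting $\xi_i'$ from $(\diamond)$ one final time collapses the identity, after routine cancellations, to exactly $B\xi_i^2+(B'+2\lambda)\xi_i+(C-\lambda)B+C'=0$.

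For the nontriviality claim, note that the leading coefficient of the polynomial is $B$. If it vanished identically as a polynomial in the variable $\xi_i$, then $B\equiv 0$, forcing $B'+2\lambda\equiv 0$ and hence $\lambda\equiv 0$; the constant term then yields $C'\equiv 0$ and $C$ constant. Translating via the explicit definitions of $B$ and $C$ under $\lambda\equiv 0$ gives $\lambda_1\equiv R+(f')^2$ with $\lambda_1$ constant, and feeding this back into the contracted second Bianchi identity applied to the almost soliton equation should force $f$ to be constant, contradicting the standing hypothesis. I expect this nontriviality step to be the main obstacle — the derivation of $P(\xi_i)=0$ itself is a careful but routine algebraic elimination, whereas making the degenerate case genuinely contradictory requires combining the soliton equation, the harmonic Weyl constraint, and the Bianchi identity precisely enough to exclude nontrivial steady structures.
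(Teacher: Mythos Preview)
The paper does not give a self-contained proof of this lemma here; it defers to \cite{brgs2025}. Your overall architecture --- isolate a first-order ODE for each $\xi_i$, then equate two independent expressions for $\xi_i''$ --- is the right idea and does produce \eqref{poly_atmst2}. Two points need fixing.

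\medskip

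\textbf{The intermediate identity is misstated.} Combining the fiber soliton equation (e.g.\ the generalization of \eqref{neq24}) with $\sum_\ell r_\ell\xi_\ell=B+f'$ gives
\[
\frac{\mu_i}{h_i^2}=\xi_i'+B\xi_i+\lambda,
\]
with \emph{no} factor of $(r_i-1)$ on the left: writing $\lambda_i=\lambda-f'\xi_i$ and substituting the warped-product Ricci formula from Lemma~\ref{warped}, the $f'\xi_i$ terms cancel and nothing produces an $(r_i-1)$. Consequently your route to $(\diamond)$ by equating this with \eqref{first_scnd_Dg} does not work as written; with the correct identity that comparison only returns $(2-r_i)(\xi_i^2-B\xi_i-C)=0$. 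The clean source of $(\diamond)$ is the harmonic Weyl condition itself: Lemma~\ref{bari} (cf.\ Remark~\ref{rmk_mult_warp_more_fibers}) forces $h_i''/h_i$ to be $i$-independent, so $\mathrm{Ric}_{11}=-\sum_\ell r_\ell h_\ell''/h_\ell=-(n-1)h_i''/h_i$, while the contracted Bianchi identity applied to the soliton equation gives $\mathrm{Ric}_{11}f'=\tfrac12 R'-(n-1)\lambda'$, i.e.\ $\mathrm{Ric}_{11}=-(n-1)(C-\lambda)$. Hence $\xi_i'+\xi_i^2=h_i''/h_i=C-\lambda$, which is $(\diamond)$. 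With $(\diamond)$ established this way and the \emph{correct} intermediate identity above, your two expressions for $\xi_i''$ are right (the spurious constant factor on the left is immaterial after differentiation), and your final algebra collapses to \eqref{poly_atmst2} exactly.

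\medskip

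\textbf{Nontriviality.} The sketch ``Bianchi should force $f$ constant'' is not an argument, and steady soliton structures with nonconstant $f$ certainly exist in general, so something specific to the present setup is needed. Here is a direct finish. If the polynomial were identically zero then $B\equiv 0$, $\lambda\equiv 0$, and $C$ is constant. From $B\equiv 0$ one has $\sum_\ell r_\ell\xi_\ell=f'$. Differentiating and using $(\diamond)$ together with $f''=-\mathrm{Ric}_{11}+\lambda=(n-1)(C-\lambda)=(n-1)C$ (item~\eqref{cao4} of Lemma~\ref{caochen}) gives
\[
\sum_\ell r_\ell\bigl(C-\xi_\ell^2\bigr)=(n-1)C,\qquad\text{hence}\qquad \sum_\ell r_\ell\,\xi_\ell^2=0.
\]
Since every $r_\ell>0$, all $\xi_\ell$ vanish, so $f'=\sum_\ell r_\ell\xi_\ell=0$, contradicting nonconstancy of $f$.
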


As an immediate consequence, we obtain the following result. 

\begin{corollary}\label{numbfibalmo}
	Consider a multiply warped product $M = I\times_{h_1} N_1^{r_1} \times \cdots \times_{h_k} N_{k}^{r_k}$ satisfying the conventions of Remark \ref{grmnt_wrpngfnctn}. If $M$ has harmonic Weyl curvature and also admits a structure of almost Ricci soliton with nonconstant $f$, then $k \leq 2$.
\end{corollary}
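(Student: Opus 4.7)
The plan is to argue by contradiction. Suppose $k\geq 3$ and pick any three of the warping functions, say $h_1,h_2,h_3$. By Lemma \ref{lem_ply_deg2_}, the associated logarithmic derivatives $\xi_i = h_i'/h_i$ ($i=1,2,3$) are simultaneously smooth roots of
\[
P(x) = B\,x^{2} + (B'+2\lambda)\,x + (C-\lambda)B + C',
\]
whose coefficients are smooth functions of $s\in I$, and whose nontriviality (as a polynomial in $x$ with function coefficients) is guaranteed by the same lemma.

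The first step is to use nontriviality of $P$ to extract a nonempty open subset $V\subset I$ on which $P_s$ is a nonzero polynomial of degree at most two in $x$. For every $s\in V$, the equation $P_s(x)=0$ has at most two distinct real roots, so among the three values $\xi_1(s),\xi_2(s),\xi_3(s)$ at least two must coincide at each such $s$.

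The second step is to promote this pointwise coincidence to a coincidence on a full subinterval. Writing $V$ as the union of the three closed subsets $F_{ij}=\{s\in V:\xi_i(s)=\xi_j(s)\}$, Baire category produces indices $i_0\neq j_0$ and a nonempty open subinterval $J\subset V$ on which $\xi_{i_0}\equiv \xi_{j_0}$. The final step is then to integrate $(\log(h_{i_0}/h_{j_0}))' = \xi_{i_0}-\xi_{j_0}\equiv 0$ on $J$, which yields a positive constant $c$ with $h_{i_0}=c\,h_{j_0}$ on $J$. This contradicts the pairwise nonproportionality of warping functions required by Remark \ref{grmnt_wrpngfnctn}, forcing $k\leq 2$.

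The main obstacle is the second step: the passage from a pointwise pigeonhole to an honest coincidence on a full subinterval, for which one genuinely needs the finiteness of the pair-index set together with Baire category. Once that is in hand the last step is essentially formal, using the operative (subinterval-sensitive) reading of the nonproportionality convention dictated by the local decomposition of Theorem \ref{decompwarp}. I would also expect, in writing up, to reduce to an open subinterval on which the generic branch of each $\xi_i$ is well-defined, so that a cleaner quadratic-formula argument can replace the Baire step.
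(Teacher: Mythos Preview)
Your strategy—contradiction via Lemma \ref{lem_ply_deg2_} and a pigeonhole on the roots of a nontrivial quadratic—is correct in outline and yields a self-contained argument. The paper's own proof is a one-line invocation of Lemma \ref{lem_ply_deg2_} together with an external result (Corollary 2.6 of \cite{brgs1}), so its internal mechanics are not visible for direct comparison; your route is more explicit.

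There is, however, a genuine gap in your final step, and your Baire maneuver is more than you need. The gap: obtaining $h_{i_0}=c\,h_{j_0}$ on a proper subinterval $J\subsetneq I$ does \emph{not} contradict Remark \ref{grmnt_wrpngfnctn}, which only asks that the $h_i$ be nonproportional as functions on $I$; the ``subinterval-sensitive reading'' you invoke is not a proof. The clean fix is already in the paper and eliminates Baire altogether. By Remark \ref{rmk_mult_warp_more_fibers}, harmonicity of the Weyl tensor on the multiply warped product is equivalent to $\xi_i'+\xi_i^2=\xi_j'+\xi_j^2$ for all $i,j$, i.e.
\[
(\xi_i-\xi_j)'=-(\xi_i+\xi_j)(\xi_i-\xi_j),
\]
a first-order linear homogeneous ODE for $\xi_i-\xi_j$. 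Uniqueness then gives that $\xi_i(s_0)=\xi_j(s_0)$ at a \emph{single} point $s_0$ forces $\xi_i\equiv\xi_j$ on all of $I$, hence $h_i=c\,h_j$ globally—the contradiction you want. So the argument collapses to: choose any $s_0$ at which $P_{s_0}$ is a nonzero polynomial (such $s_0$ exists by nontriviality), apply pigeonhole once to get $\xi_{i_0}(s_0)=\xi_{j_0}(s_0)$, and conclude. No Baire, no subinterval bookkeeping.
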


\begin{proof}
This follows immediately from \cite[Corollary 2.6]{brgs1} and Lemma \ref{lem_ply_deg2_}.
\end{proof}

Another consequence of using \eqref{eqXiRelFuncWarp} is the following one.

\begin{corollary}\label{numbereigen}
	Let $(M^n,g,f,\lambda)$, $n\geq4$, be a gradient almost Ricci soliton with harmonic Weyl curvature and nonconstant $f$. There cannot be more than two distinct $\lambda_{i}$ with $i\in\{2,\ldots,n\}$. Consequently, the Ricci tensor has at most three eigenvalues.
\end{corollary}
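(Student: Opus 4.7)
The plan is to deduce the corollary directly from the multiply warped product representation of Theorem \ref{decompwarp}, the bound on the number of fibers provided by Corollary \ref{numbfibalmo}, and the identity \eqref{eqXiRelFuncWarp}. The structural work has essentially been done in the preceding lemmas; the remaining task is a careful bookkeeping of how Ricci eigenvalues are organized across the fibers.

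First, I would work locally around a regular point of $f$. By Theorem \ref{decompwarp}, in such a neighborhood $M$ is isometric to a multiply warped product $I\times_{h_1}N_1^{r_1}\times\cdots\times_{h_k}N_k^{r_k}$ with each fiber of dimension at least two being Einstein. Combining Lemma \ref{caochen} with Lemma \ref{warped}, an eigenframe $\{E_i\}_{i=1}^n$ adapted to $\mathrm{Ric}$ can be arranged so that $E_1=\nabla f/|\nabla f|$ points along the base $I$, while the remaining frame vectors $E_2,\ldots,E_n$ are split into $k$ groups, with the $j$-th group consisting of $r_j$ vectors tangent to $N_j^{r_j}$. Since $N_j^{r_j}$ is Einstein (and automatically so when $r_j=1$), all vectors in the $j$-th group share a single Ricci eigenvalue, which is precisely the quantity denoted $\lambda_j$ in \eqref{eqXiRelFuncWarp}.

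Next, I would use Remark \ref{grmnt_wrpngfnctn} together with \eqref{eqXiRelFuncWarp} to show that distinct fibers correspond to distinct eigenvalues. By the counting conventions, the warping functions $h_1,\ldots,h_k$ are pairwise nonproportional, so the logarithmic derivatives $\xi_i=h_i'/h_i$ are pairwise distinct functions on $I$. Then \eqref{eqXiRelFuncWarp} rewrites as
\begin{equation*}
\lambda_i=\lambda-f'\xi_i,
\end{equation*}
and since $f'\neq0$ on this neighborhood, pairwise distinctness of the $\xi_i$ transfers to pairwise distinctness of the fiber eigenvalues $\lambda_i$. Hence the number of distinct values among $\{\lambda_2,\ldots,\lambda_n\}$ is exactly $k$, the number of fibers.

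Finally, Corollary \ref{numbfibalmo} (which in turn relies on Lemma \ref{lem_ply_deg2_}) forces $k\leq2$, which gives the first assertion. The full spectrum of $\mathrm{Ric}$ is obtained by adjoining $\lambda_1$ to these at most two values, yielding at most three distinct eigenvalues in total and proving the corollary. The only delicate step is the eigenvalue-versus-fiber bookkeeping via the convention in Remark \ref{grmnt_wrpngfnctn}; everything else is an immediate consequence of results already established in this section.
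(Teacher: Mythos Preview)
Your argument is correct and matches the paper's intended (and essentially unwritten) proof: the corollary is stated there as an immediate consequence of \eqref{eqXiRelFuncWarp} together with Corollary~\ref{numbfibalmo}, and you have simply spelled out the bookkeeping. One minor remark: you prove that the number of distinct values among $\lambda_2,\ldots,\lambda_n$ is \emph{exactly} $k$, but only the inequality ``at most $k$'' is needed, and that already follows from your observation that all frame vectors tangent to a given fiber share a single Ricci eigenvalue.
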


Now we show that Corollary \ref{numbereigen} implies Theorem \ref{maxxnumeig-INTRO}.

\begin{proof}[{\bf Proof of Theorem \ref{maxxnumeig-INTRO}}]
	Let $(M^n,g,f,\lambda)$ be an almost Ricci soliton with $n\geq4$ and let $p\in M_{\mathcal{A}}$. It follows from Lemma \ref{denseempty} that there is a connected open set $U\subset M_{\mathcal{A}}$ containing $p$ such that the number of eigenvalues of $\mathrm{Ric}$ in $U$ is constant and $U\cap\mathcal{R}$ is either empty or dense in $U$. If it is empty, then the number of eigenvalues of $\mathrm{Ric}$ in $U$ is constant and equals one. If it is dense, Corollary \ref{numbereigen} asserts that $\mathrm{Ric}$ has exactly two or three eigenvalues in $U\cap\mathcal{R}$, and consequently, the same is true in $U$.
\end{proof}

As an immediate consequence:

\begin{proof}[{\bf Proof of Theorem \ref{decompwarpint}}]
	The proof of Theorem \ref{decompwarpint} is obtained immediately by combining Theorem \ref{decompwarp} and Corollary \ref{numbfibalmo}.
\end{proof}

	We finish this section with an example of a gradient almost Ricci soliton constructed on a multiply warped product with as many fibers as one wishes.

\begin{example}\label{exwmf}
	We shall construct a family of gradient almost Ricci solitons on multiply warped products with an arbitrary number of fibers (in the sense of Remark \ref{grmnt_wrpngfnctn}). Fix any integer $k \geq 2$, and for each $1 \leq i \leq k$, let $\mathcal{N}_i^{r_i}$ be a Ricci-flat manifold of (arbitrary) dimension $r_i$. Define 
	$$
n=\sum_{j=1}^{k} r_j + 1,\qquad 
C=\sum_{j=1}^{k} r_j j,\qquad 
L^2=(n-1)\sum_{j=1}^{k} r_j j^2 - C^2>0.
$$
Set $\varepsilon=\dfrac{\pi}{2L}$ and $I=\pa{-\varepsilon,\varepsilon}$. For $s\in I$, define
$$
\begin{aligned}
h_i(s)&=\exp\pa{\pa{i-\frac{C}{n-1}}s}\,\pa{\cos\pa{Ls}}^{-\frac{1}{n-1}}>0,\quad 1\le i\le k,\\
f(s)&=-\log\pa{\cos\pa{Ls}},\\
\lambda(s)&=-\dfrac{L^2}{\pa{n-1}\cos^2\!\pa{Ls}}.
\end{aligned}
$$
	A straightforward computation shows that, for all $1\leq i\leq k$ and $s \in I$,
$$
\begin{aligned}
f''(s)-\sum_{j=1}^{k}r_j \frac{h_j''(s)}{h_j(s)}&=\lambda(s),\\
-\frac{h_i''(s)}{h_i(s)}-\pa{r_i-1}\!\pa{\frac{h_i'(s)}{h_i(s)}}^{\!2}-\frac{h_i'(s)}{h_i(s)}\sum_{\substack{1 \leq j \leq k \\
    j \neq i}} r_j \frac{h_j'(s)}{h_j(s)} + f'(s) \frac{h_i'(s)}{h_i(s)}&=\lambda(s).
\end{aligned}
$$
Therefore,
$$
\bigl(\mathcal{M} = I \times_{h_1} \mathcal{N}_1^{r_1}\times_{h_2} \mathcal{N}_2^{r_2}\times\cdots\times_{h_k} \mathcal{N}_k^{r_k},\, g,\, f,\, \lambda\bigr)
$$
is a gradient almost Ricci soliton. By our very construction, the functions $h_i$ comply with the conventions of Remark \ref{grmnt_wrpngfnctn}.

\end{example}

\begin{remark}
	As one ought to expect from Theorem \ref{decompwarpint}, the example above does not have harmonic Weyl curvature. In fact, straightforward computations show that
	$$\frac{h_i''}{h_i}-\frac{h_j''}{h_j}
=\frac{i-j}{(n-1)^2} \Big(i+j-2C+2L\tan(Ls)\Big)$$
	for all $1\leq i,j\leq k$, thus the Cotton tensor is nonvanishing (see Remark \ref{rmk_mult_warp_more_fibers}). Even if one were to choose different constants in the definition of $h_i$, it is not possible to make all these expressions vanish simultaneously for $k\geq 3$ unless one forces $h_i \sim h_j$ (in the sense of Remark \ref{grmnt_wrpngfnctn}) for all but two $i,j$.
\end{remark}

\section{The almost Ricci soliton equations with at most two fibers}\label{subs_mult_warp}

\hspace{.5cm}In this section, we consider manifolds of the form $I\times_{h} N^{n-1}$ and $I\times_{h_1} N_1^{r_{1}} \times_{h_2} N_2^{r_{2}}$, and translate the gradient almost Ricci soliton equation and the vanishing of the Cotton tensor in terms of the warping functions and the geometry of the fibers. Recall that according to Theorem \ref{numbfibalmo}, we only need to consider these two cases. In our analysis, we will see that there are no further restrictions in the case of only one fiber (see Propositions \ref{pendT51_harmonic} and \ref{harmWarp} below). However, in the case of two fibers, the situation is rather more restrictive.

\subsection{Equations when there is only one fiber}\label{subs_mult_warp_1F}

\hspace{.5cm} In this subsection, we deal with the case where we have only one fiber. In the proposition below, whose proof is a straightforward computation using Lemma \ref{warped}, we rewrite the gradient almost Ricci soliton equation. 
\begin{proposition}\label{T31}
	Let $I\times_{h} N^{n-1}$ be a warped product. There are smooth functions $f,\lambda:I\rightarrow\mathbb{R}$ such that $(I\times_{h} N^{n-1}, g,f,\lambda)$ is an almost Ricci soliton if and only if $N^{n-1}$ is an Einstein manifold with Einstein constant $\mu$, and the functions $f,\lambda:I\rightarrow\mathbb{R}$ satisfy the system
	\begin{align}\label{SistemaSimples} 
		\begin{split}
			&\lambda = f'' - \frac{(n-1)}{h} h'' \vspace{0.3cm},\\
			&\lambda = f' \frac{h'}{h} + \frac{\mu}{h^2} - \frac{h''}{h} - \frac{(n-2)}{h^2} (h')^2 \vspace{0.3cm} .
		\end{split}
	\end{align}
\end{proposition}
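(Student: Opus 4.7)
The plan is to apply Lemma \ref{warped} with $k = 1$, one-dimensional base $B = I$ (hence Ricci-flat), and single warping function $h$, and then to compare both sides of $\mathrm{Ric} + \nabla^2 f = \lambda g$ block by block with respect to the splitting of $TM$ into base and fiber directions. Both implications of the equivalence will follow by reading the same computations in opposite directions, so essentially one calculation settles the statement.

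First, I would use Lemma \ref{warped} to record the Ricci tensor of $M = I \times_h N^{n-1}$. Since $\mathrm{Ric}_B = 0$, the base block gives $\mathrm{Ric}(\partial_s, \partial_s) = -\frac{(n-1)h''}{h}$; the mixed components vanish; and on fiber vectors $V, W$ lifted from $N$ one gets
\[
\mathrm{Ric}(V,W) = \mathrm{Ric}_N(V,W) - \left(\frac{h''}{h} + (n-2)\frac{(h')^2}{h^2}\right) g(V,W).
\]
Next, since $f$ depends only on $s$, I would write $\nabla f = f'(s)\,\partial_s$ and use the connection identity $\nabla_V \partial_s = \frac{h'}{h}V$ from Lemma \ref{warped} to compute $\nabla^2 f(\partial_s,\partial_s) = f''$, vanishing mixed components, and $\nabla^2 f(V,W) = f' \frac{h'}{h} g(V,W)$.

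Equating $\mathrm{Ric} + \nabla^2 f = \lambda g$ block by block, the base block immediately gives the first equation of \eqref{SistemaSimples}, the mixed block is trivially satisfied, and the fiber block can be rearranged into
\[
\mathrm{Ric}_N(V,W) \;=\; \Phi(s)\, h(s)^2\, g_N(V,W),
\]
where $\Phi(s) = \lambda - f'\frac{h'}{h} + \frac{h''}{h} + (n-2)\frac{(h')^2}{h^2}$ and I have used $g(V,W) = h^2 g_N(V,W)$ on fiber vectors.

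The only nontrivial step, and hence the main obstacle, is the interpretation of this last identity. Since $\mathrm{Ric}_N$ and $g_N$ are tensors on $N$ that are independent of $s$, the scalar factor $\Phi(s)h(s)^2$ must be constant in $s$; calling this constant $\mu$ forces $N$ to be Einstein with Einstein constant $\mu$, and solving $\Phi(s) = \mu/h(s)^2$ for $\lambda$ yields precisely the second equation of \eqref{SistemaSimples}. Conversely, if $N$ is Einstein with constant $\mu$ and $(f,\lambda)$ solve \eqref{SistemaSimples}, the same block decomposition shows $\mathrm{Ric} + \nabla^2 f = \lambda g$ on each type of tangent pair, hence on all of $M$, which completes the equivalence.
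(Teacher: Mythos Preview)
Your proposal is correct and follows exactly the approach indicated in the paper, which simply states that the result is a straightforward computation using Lemma~\ref{warped}. Your block-by-block comparison of $\mathrm{Ric}+\nabla^2 f$ with $\lambda g$, together with the observation that $\Phi(s)h(s)^2$ must be independent of $s$ (forcing $N$ to be Einstein), is precisely what that computation amounts to.
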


A simple manipulation of equations \eqref{SistemaSimples} shows how the warping function can be used to determine both $f$ and $\lambda$. This is the content of the following proposition.
\begin{proposition}\label{pendT51_harmonic}
	The four-tuple $(I\times_{h} N^{n-1}, g,f,\lambda)$ is an almost Ricci soliton if and only if $N^{n-1}$ is an Einstein manifold with Einstein constant $\mu$, and the functions $f,\lambda:I\rightarrow\mathbb{R}$ are determined by $h$ through the following expressions
	\begin{align}
		&f'(s)=h(s)\int^{s}\frac{\mu+(n-2)(h(t)h''(t)-(h'(t))^2)}{(h(t))^3}\mathrm{d}t\label{eqitem_1_harm}\\
		&h\left(\frac{\lambda}{h'}\right)'=(n-1)\frac{h''}{h}+\mu\left(\frac{1}{hh'}\right)'-\left(\frac{h''}{h'}\right)'-(n-2)\left(\frac{h'}{h}\right)'\label{eqitem_2_harm}
	\end{align}
\end{proposition}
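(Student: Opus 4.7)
The plan is to derive both formulas directly from the two scalar ODEs in Proposition \ref{T31}, treating them as a linear system in the unknowns $f$ and $\lambda$ and eliminating $\lambda$ (for the first formula) or eliminating $f$ (for the second). The forward implication reduces to standard manipulations of \eqref{SistemaSimples}; the converse amounts to reversing each step, so the content is essentially symmetric.

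First I would subtract the two equations of \eqref{SistemaSimples} to kill $\lambda$, obtaining
\begin{equation*}
f'' - f'\,\frac{h'}{h} \;=\; \frac{(n-2)(h h'' - (h')^2)}{h^2} + \frac{\mu}{h^2}.
\end{equation*}
The key observation is that the left-hand side equals $h\bigl(f'/h\bigr)'$, since $\bigl(f'/h\bigr)' = f''/h - f'h'/h^2$. Dividing by $h$ yields
\begin{equation*}
\left(\frac{f'}{h}\right)' \;=\; \frac{\mu + (n-2)(h h'' - (h')^2)}{h^3},
\end{equation*}
and a single indefinite integration followed by multiplication by $h$ gives exactly \eqref{eqitem_1_harm}.

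For the second identity I would proceed dually: solve the second equation of \eqref{SistemaSimples} algebraically for $f'$, namely
\begin{equation*}
f' \;=\; \frac{\lambda h}{h'} + \frac{h''}{h'} + (n-2)\frac{h'}{h} - \frac{\mu}{h h'},
\end{equation*}
differentiate, and substitute into the first equation of \eqref{SistemaSimples} in the form $f'' = \lambda + (n-1)h''/h$. Using the product-rule identity
\begin{equation*}
\left(\frac{\lambda h}{h'}\right)' \;=\; h\left(\frac{\lambda}{h'}\right)' + \lambda,
\end{equation*}
the stray $\lambda$ on each side cancels and, after rearranging, one recovers \eqref{eqitem_2_harm} verbatim. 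The converse direction simply runs the same chain of equivalences backward, recovering \eqref{SistemaSimples} from \eqref{eqitem_1_harm} and \eqref{eqitem_2_harm}, and then invoking Proposition \ref{T31} to conclude that $N^{n-1}$ is Einstein with constant $\mu$ and $(I\times_{h}N^{n-1},g,f,\lambda)$ is an almost Ricci soliton.

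The only mild obstacle is bookkeeping around the denominator $h'$ that appears when we solve for $f'$ in the derivation of \eqref{eqitem_2_harm}; this is harmless in the setting of interest, since in the multiply warped representation coming from Theorem \ref{decompwarp} we work near a regular point of $f$ and the warping function is nonconstant (cf.\ Remark \ref{grmnt_wrpngfnctn}), so $h'$ does not vanish identically and the formulas hold on the open dense set where $h'\neq 0$; elsewhere the identities extend by continuity. Beyond that, the argument is a mechanical manipulation with no conceptual surprises.
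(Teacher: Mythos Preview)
Your proposal is correct and follows essentially the same route as the paper: subtracting the two equations in \eqref{SistemaSimples} to recognize $(f'/h)'$ and integrate for \eqref{eqitem_1_harm}, then combining the second equation of \eqref{SistemaSimples} with the first (via differentiation) to obtain \eqref{eqitem_2_harm}. The paper simply labels the second derivation a ``straightforward computation,'' whereas you spell out the product-rule cancellation of $\lambda$; otherwise the arguments coincide.
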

\begin{proof}
	From \eqref{SistemaSimples} we can write

	\begin{align*}
		\frac{h f'' - f'h'}{h^2} = \left( n-1 \right) \frac{h''}{h^2} + \frac{\mu}{h^{3}} - \frac{h'' }{h^{2} }  - \left( n-2 \right) \frac{\left( h' \right)^{2} }{h^{3} },  
	\end{align*}
	and consequently,
	\begin{align*}
		\left( \frac{f'}{h }  \right)' &= \left( n-2 \right) \left(\frac{ h''h - \left( h' \right)^{2}  } {h^{3}} \right) + \frac{\mu}{ h^{3}}  \\
		&= \frac{\mu + \left( n-2 \right) \left( h'' h - \left( h' \right)^{2}  \right) }{h^{3} }.
	\end{align*}
	The expression above is clearly equivalent to \eqref{eqitem_1_harm}. A straightforward computation shows that \eqref{eqitem_2_harm} is a consequence of the second equation in \eqref{SistemaSimples} combined with \eqref{eqitem_1_harm}.
\end{proof}

Now we consider the vanishing Cotton condition on a warped product with a single fiber. Straightforward computations show that in any multiply warped product with harmonic Weyl curvature, every fiber is Einstein, independently of the almost gradient Ricci soliton structure and regardless of the base dimension. In the one-fiber case, the converse holds: any warped product with a single Einstein fiber has a vanishing Cotton tensor; for a proof, see Example 16.26(1) in \cite{besse}.

\begin{proposition}\label{harmWarp}
	Let $M^n = I\times_{h} N^{n-1}$ be a warped product, where $N$ is an Einstein manifold. Then $M$ has harmonic Weyl curvature.
\end{proposition}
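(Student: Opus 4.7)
For $n\geq 4$ the statement is equivalent to showing that the Schouten tensor $A=\mathrm{Ric}-\frac{R}{2(n-1)}g$ of $M$ is Codazzi (that is, the Cotton tensor vanishes); the case $n=3$ is automatic since the Weyl tensor vanishes identically. The proof is a direct componentwise verification of the Codazzi identity using the formulas for the Levi-Civita connection and Ricci tensor of a warped product (Lemma \ref{warped}), together with the Einstein condition on the fiber.

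\textbf{Step 1: explicit form of $\mathrm{Ric}$ and $R$.} Applying Lemma \ref{warped} with $B=I$ one-dimensional (hence Ricci-flat, $R_B=0$) and a single fiber $N^{n-1}$ satisfying $\mathrm{Ric}_N=\mu\,g_N$, and setting $E_1=\partial_s$, one obtains
\begin{align*}
\mathrm{Ric}(E_1,E_1) &= -(n-1)\,\frac{h''}{h}, \qquad \mathrm{Ric}(E_1,V)=0,\\
\mathrm{Ric}(V,W) &= \phi(s)\,g(V,W), \qquad \phi(s)=\frac{\mu-hh''-(n-2)(h')^2}{h^2},
\end{align*}
for vertical lifts $V,W$, while $R=R(s)$ depends only on $s$. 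Therefore the Schouten tensor takes the form
\[
A=\alpha(s)\,E_1^{\flat}\!\otimes E_1^{\flat}+\beta(s)\,\hat g_{\perp},
\]
where $\hat g_{\perp}$ denotes the restriction of $g$ to the fiber distribution, and $\alpha,\beta$ are smooth functions of $s$ alone. In particular, the eigenspaces of $A$ are $\mathrm{span}(E_1)$ and the fiber distribution, and both eigenvalues depend only on $s$.

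\textbf{Step 2: checking Codazzi.} We verify $(\nabla_X A)(Y,Z)=(\nabla_Y A)(X,Z)$ by cases on how many of $X,Y,Z$ lie in the fiber distribution. If $X=Y$ the identity is trivial. When one of $X,Y$ is $E_1$ and the other is vertical, say $X=E_1$, $Y=V$, both sides reduce after inserting the formulas $\nabla_{E_1}V=\nabla_V E_1=(h'/h)V$ and $\nabla_V W=\nabla_V^N W-hg_N(V,W)\nabla_B h$ from Lemma \ref{warped} to the same linear combination of $\alpha,\beta,\alpha',\beta'$ and $h'/h$ multiplied by $g(V,Z)$ or $g(W,Z)$; these match on both sides by direct comparison. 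When all of $X,Y,Z$ are vertical, the key observation is that $A|_{\perp}=\beta(s)\,\hat g_{\perp}$ with $V(\beta)=W(\beta)=U(\beta)=0$, so the Codazzi identity reduces to a symmetric expression in $V,W$ in which the only nontrivial fiber contribution comes from $\mathrm{Ric}_N$; since $N$ is Einstein, $\nabla^N \mathrm{Ric}_N=0$, and the remaining terms are symmetric by $\nabla^N g_N=0$.

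\textbf{Main obstacle.} The only step requiring care is the all-vertical case: one must pay attention to the fact that $\nabla_V W$ has both a horizontal correction $-hg_N(V,W)\nabla_B h$ and the fiber Levi-Civita part, and verify that the resulting horizontal component of $(\nabla_V A)(W,U)-(\nabla_W A)(V,U)$ vanishes. This is precisely where the Einstein hypothesis on $N$ is used, through the fact that $\mu$ is a constant (so $\mathrm{Ric}_N$ is parallel on $N$). A concise alternative is to invoke the explicit computation in \cite[Example 16.26(1)]{besse}, which carries out exactly this verification for warped products with one-dimensional base and Einstein fiber.
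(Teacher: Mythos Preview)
Your proposal is correct and aligns with the paper's approach: the paper does not give its own argument but simply refers to \cite[Example 16.26(1)]{besse}, and your componentwise verification of the Codazzi condition for the Schouten tensor is precisely what that reference carries out (and you even cite it yourself at the end). The only step you leave as an assertion---the mixed case $X=E_1$, $Y=V$, $Z=W$---reduces to the identity $\beta'=(h'/h)(\alpha-\beta)$, which does hold by a short direct computation.
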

Putting together Proposition \ref{pendT51_harmonic} and Proposition \ref{harmWarp}, we have the following result

\begin{corollary}\label{cor_defin_1fiber}
	Let $I\times_{h} N^{n-1}$ be a warped product of an interval and an Einstein manifold $N^{n-1}$ having Einstein constant $\mu$. Set $f,\lambda:I\rightarrow\mathbb{R}$ using \eqref{eqitem_1_harm} and \eqref{eqitem_2_harm}. Then $(I\times_{h} N^{n-1}, g,f,\lambda)$ is an almost Ricci soliton having harmonic Weyl tensor.
\end{corollary}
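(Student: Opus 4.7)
The plan is to obtain Corollary \ref{cor_defin_1fiber} as an immediate synthesis of Proposition \ref{pendT51_harmonic} and Proposition \ref{harmWarp}, which were already established. The only substance to check is that the "if" direction of the equivalence in Proposition \ref{pendT51_harmonic} applies here: starting from $h$ and the Einstein constant $\mu$ of $N^{n-1}$, and \emph{defining} $f$ and $\lambda$ by \eqref{eqitem_1_harm} and \eqref{eqitem_2_harm}, one recovers the two equations of the system \eqref{SistemaSimples}, which are equivalent to the almost Ricci soliton equation for a warped product with Einstein fiber.

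Concretely, I would first note that since $h>0$ on $I$, the integrand in \eqref{eqitem_1_harm} is smooth and the formula yields a well-defined smooth function $f'$ on $I$, hence a smooth $f$ up to an additive constant. Differentiating \eqref{eqitem_1_harm} gives
\begin{equation*}
\left(\frac{f'}{h}\right)'=\frac{\mu+(n-2)(h''h-(h')^{2})}{h^{3}},
\end{equation*}
which rearranges exactly to the second equation of \eqref{SistemaSimples} once $\lambda$ is eliminated; substituting this back, equation \eqref{eqitem_2_harm} is then tantamount to the first equation of \eqref{SistemaSimples}, as was shown (in the reverse direction) within the proof of Proposition \ref{pendT51_harmonic}. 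Since the manipulations in that proof are all reversible, the definitions \eqref{eqitem_1_harm} and \eqref{eqitem_2_harm} produce $(f,\lambda)$ satisfying \eqref{SistemaSimples}. By Proposition \ref{T31}, this is precisely the condition for $(I\times_{h} N^{n-1},g,f,\lambda)$ to be a gradient almost Ricci soliton.

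Finally, harmonicity of the Weyl tensor follows at once from Proposition \ref{harmWarp}, since the fiber $N^{n-1}$ is Einstein by hypothesis. Combining the two conclusions yields the corollary.

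I do not expect any serious obstacle: the content of the corollary is essentially bookkeeping, and the only minor point is to be explicit that the equivalence in Proposition \ref{pendT51_harmonic} is indeed applied in the direction ``data $(h,\mu)$ plus the formulas $\Longrightarrow$ almost Ricci soliton structure,'' which is legitimate because the derivations in its proof consist of algebraic rearrangements rather than one-way implications.
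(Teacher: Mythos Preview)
Your proposal is correct and follows essentially the same approach as the paper, which simply states that the corollary is obtained by putting together Proposition \ref{pendT51_harmonic} and Proposition \ref{harmWarp}. Your added care in spelling out that the ``if'' direction of Proposition \ref{pendT51_harmonic} is being invoked is appropriate bookkeeping and does not depart from the paper's argument.
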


If $N^{n-1}$ is taken with constant sectional curvature, then we have the following analogues:

\begin{proposition}\label{LCFWarp}
	Let $M^n = I\times_{h} N^{n-1}$ be a warped product, where $N$ has constant sectional curvature. Then $M$ is locally conformally flat.
\end{proposition}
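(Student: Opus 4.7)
The plan is to leverage Proposition \ref{harmWarp} as a starting point and then upgrade "harmonic Weyl" to "vanishing Weyl" via an explicit curvature computation. Since $N^{n-1}$ has constant sectional curvature it is, in particular, Einstein, so Proposition \ref{harmWarp} immediately gives that $M$ has harmonic Weyl tensor, equivalently vanishing Cotton tensor. This settles the case $n=3$, where local conformal flatness is equivalent to $C \equiv 0$. So I would reduce to $n \geq 4$, where local conformal flatness is equivalent to $W \equiv 0$.

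For $n \geq 4$, I would compute the full Riemann tensor of $M = I \times_h N^{n-1}$ using O'Neill's warped product formulas (or the Levi-Civita part of Lemma \ref{warped}), together with the fact that a space form of constant sectional curvature $k$ has $\mathrm{Rm}_N = \tfrac{k}{2}\, g_N \varowedge g_N$. Writing $\partial_s$ for the unit vector along $I$ and $V, W, V', W'$ for lifted vertical vectors, the only potentially nonzero components are
\begin{equation*}
\mathrm{Rm}(\partial_s, V, \partial_s, W) = -\frac{h''}{h}\, g(V, W), \quad
\mathrm{Rm}(V, W, V', W') = \frac{k - (h')^2}{h^2}\bigl(g(V, V')g(W, W') - g(V, W')g(W, V')\bigr),
\end{equation*}
while all components with an odd number of $\partial_s$-slots vanish by the structure of the warped product connection.

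The key algebraic observation is that the resulting Riemann tensor decomposes as
\begin{equation*}
\mathrm{Rm} = \frac{k - (h')^2}{2h^2}\, g \varowedge g + \eta \varowedge g,
\end{equation*}
for a suitable symmetric $(0,2)$-tensor $\eta$ built from $h, h', h''$ and $ds \otimes ds$ (one reads off $\eta$ by matching the $\partial_s$-components above against the Kulkarni--Nomizu expansion). Since the Weyl tensor is by construction the totally trace-free piece in the canonical splitting
\begin{equation*}
\mathrm{Rm} = W + \frac{1}{n-2}\left(\mathrm{Ric} - \frac{R}{n}g\right) \varowedge g + \frac{R}{2n(n-1)}\, g \varowedge g,
\end{equation*}
any tensor of the form (symmetric 2-tensor) $\varowedge\, g$ contributes only to the Ricci/scalar summands; hence the representation above forces $W \equiv 0$. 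Equivalently, one can substitute $\mathrm{Ric}$ and $R$ from Lemma \ref{warped} (specialized to a space-form fiber) directly into the defining formula for $W$ and verify that all components cancel.

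The main obstacle is the bookkeeping: correctly identifying $\eta$ and confirming that the $\mathrm{Ric}$- and scalar-contributions produced by the Kulkarni--Nomizu expansion precisely match the Ricci tensor and scalar curvature of $M$ given by Lemma \ref{warped}. There are no conceptual difficulties — this result is in fact the easier half of the classical characterization that $I \times_h N^{n-1}$ is locally conformally flat if and only if $N^{n-1}$ has constant sectional curvature.
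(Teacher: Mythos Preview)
Your argument is correct and complete. Note, however, that the paper does not actually supply a proof of this proposition: it is stated as the constant-sectional-curvature analogue of Proposition~\ref{harmWarp} (which is itself referred to Example~16.26(1) in Besse~\cite{besse}) and treated as a classical fact without further justification. Your route---handling $n=3$ via Proposition~\ref{harmWarp}, and for $n\ge 4$ exhibiting $\mathrm{Rm}$ in the form $\alpha\, g\varowedge g + \eta\varowedge g$ and then using that any Kulkarni--Nomizu product with $g$ has zero Weyl part---is a standard and self-contained way to establish the result, and is essentially what one finds in references such as \cite{besse} or \cite{brovaz}.
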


\begin{corollary}\label{LCF_cor_defin_1fiber}
	Let $I\times_{h} N^{n-1}$ be a warped product of an interval and a manifold $N^{n-1}$ having constant sectional curvature $\mu/(n-2)$. Set $f,\lambda:I\rightarrow\mathbb{R}$ using \eqref{eqitem_1_harm} and \eqref{eqitem_2_harm}. Then $(I\times_{h} N^{n-1}, g,f,\lambda)$ is a locally conformally flat almost Ricci soliton.
\end{corollary}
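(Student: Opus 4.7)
The plan is to assemble the corollary directly from the two immediately preceding results, with a single normalization check bridging them. First, I would record the standard fact that an $(n-1)$-dimensional Riemannian manifold of constant sectional curvature $K$ is Einstein with Ricci tensor $\mathrm{Ric}_{N}=(n-2)K\,g_{N}$. Thus the hypothesis that $N^{n-1}$ has constant sectional curvature $\mu/(n-2)$ is simply the statement that $N^{n-1}$ is Einstein with Einstein constant $\mu$, in exactly the normalization used in Corollary \ref{cor_defin_1fiber} and in the integral formulas \eqref{eqitem_1_harm}--\eqref{eqitem_2_harm}.

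Next, I would apply Corollary \ref{cor_defin_1fiber} to the present setup: since $N^{n-1}$ is Einstein with constant $\mu$ and $f,\lambda\colon I\to\mathbb{R}$ are defined by \eqref{eqitem_1_harm} and \eqref{eqitem_2_harm}, the four-tuple $(I\times_{h} N^{n-1},g,f,\lambda)$ is an almost Ricci soliton. (Corollary \ref{cor_defin_1fiber} additionally gives harmonic Weyl tensor, which we do not need for the present statement but is implied by local conformal flatness anyway.) Finally, I would invoke Proposition \ref{LCFWarp} to conclude that the underlying warped product $I\times_{h} N^{n-1}$ is locally conformally flat, since its fiber has constant sectional curvature. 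Together these two facts yield that $(I\times_{h} N^{n-1},g,f,\lambda)$ is a locally conformally flat almost Ricci soliton, as required.

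There is no genuine obstacle in this argument; it is essentially a bookkeeping assembly. The only subtle point — and it is minor — is the translation between ``constant sectional curvature $\mu/(n-2)$'' and ``Einstein with Einstein constant $\mu$,'' which is precisely what makes the expression for $f$ in \eqref{eqitem_1_harm} consistent with both Proposition \ref{pendT51_harmonic} (as used through Corollary \ref{cor_defin_1fiber}) and the conformal flatness provided by Proposition \ref{LCFWarp}. Once this normalization is verified, the proof reduces to citing the two preceding results in sequence.
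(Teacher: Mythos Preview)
Your proposal is correct and matches the paper's intended argument: the corollary is stated without an explicit proof, as an immediate analogue of Corollary~\ref{cor_defin_1fiber} obtained by combining it with Proposition~\ref{LCFWarp}, and your normalization check between constant sectional curvature $\mu/(n-2)$ and Einstein constant $\mu$ is exactly the bookkeeping needed to bridge the two.
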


We finish this subsection by observing that when there is only one fiber (counted according to Remark \ref{grmnt_wrpngfnctn}) and the gradient almost soliton is not Einstein, the number of distinct eigenvalues of the Ricci tensor is necessarily two.

\subsection{Equations when there are two fibers}\label{subs_mult_warp_2F}

\hspace{.5cm} In this subsection, we consider the case where we have two fibers. The proposition below rewrites the gradient almost Ricci soliton equation in this case. Its proof follows from a straightforward computation using Lemma \ref{warped}.

   \begin{proposition}\label{localstrucDupVBhr}
	Let $I\times_{h_1} N_1^{r_{1}} \times_{h_2} N_2^{r_{2}}$ be a multiply warped product. Then there exist functions $f,\lambda:I\rightarrow\mathbb{R}$ such that $(I \times_{h_1} N_1^{r_{1}} \times_{h_2} N_2^{r_{2}}, g,f,\lambda)$ is an almost Ricci soliton if and only if each fiber of dimension at least two is an Einstein manifold, with Einstein constant $\mu_{i}$, and the functions $f$ and $\lambda:I\rightarrow\mathbb{R}$ satisfy the system
	\begin{equation}\label{SistemaTriplo}
		\begin{split}
			&\lambda = f'' - r_1 \frac{h_1''}{h_1} - r_2 \frac{h_2''}{h_2}, \vspace{0.3cm} \\
			&\lambda = \frac{\mu_{1}}{(h_1)^2} + \frac{h_1'}{h_1} f' - \frac{h_1''}{h_1} - (r_1 - 1) \frac{(h_1')^2}{(h_1)^2} - r_2 \frac{h_1 ' h_2' }{h_1 h_2}, \vspace{0.3cm}\\ 
			&\lambda = \frac{\mu_{2}}{(h_2)^2} + \frac{h_2'}{h_2} f' - \frac{h_2''}{h_2} - (r_2 - 1) \frac{(h_2')^2}{(h_2)^2} - r_1 \frac{h_1 ' h_2' }{h_1 h_2}.
		\end{split}
	\end{equation}
\end{proposition}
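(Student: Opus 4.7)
The plan is to test the almost Ricci soliton equation $\mathrm{Ric} + \nabla^2 f = \lambda g$ slot by slot along the decomposition $TM = TI \oplus TN_1 \oplus TN_2$, using Lemma \ref{warped} to compute both sides and exploiting that $f$ depends only on the base coordinate $s \in I$.

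First, I would compute the four relevant components of $\nabla^2 f$. Writing $\partial_s$ for the unit field on $I$, the Levi-Civita formulas in Lemma \ref{warped}(1) give, in order: $\nabla^2 f(\partial_s, \partial_s) = f''(s)$; $\nabla^2 f(\partial_s, V) = \partial_s(V(f)) - (h_i'/h_i)\, V(f) = 0$ for any $V \in \mathcal{L}(N_i)$; for $V, W \in \mathcal{L}(N_i)$ the horizontal part of $\nabla_V W$ is $-h_i g_{N_i}(V,W)\, \nabla^B h_i$, which combined with $W(f) = 0$ yields
\begin{equation*}
\nabla^2 f(V,W) \;=\; -(\nabla_V W)(f) \;=\; h_i g_{N_i}(V,W)\, h_i' f' \;=\; \frac{h_i'}{h_i}\, f'\, g(V,W);
\end{equation*}
and finally $\nabla^2 f(V,W) = 0$ whenever $V \in \mathcal{L}(N_i)$ and $W \in \mathcal{L}(N_j)$ with $i \neq j$, since $\nabla_V W = 0$ in that case.

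Second, I would plug these into $\mathrm{Ric} + \nabla^2 f = \lambda g$ using the Ricci formulas of Lemma \ref{warped}(2). The mixed slots vanish on both sides automatically, and the slot $(\partial_s, \partial_s)$ produces directly the first equation of \eqref{SistemaTriplo}. On a same-fiber slot $(V, W)$ with $V, W \in \mathcal{L}(N_i)$, rearranging produces
\begin{equation*}
\mathrm{Ric}_{N_i}(V,W) = \left(\lambda + \frac{h_i''}{h_i} + (r_i-1)\frac{(h_i')^2}{h_i^2} + \sum_{\ell \neq i} r_\ell \frac{h_i' h_\ell'}{h_i h_\ell} - \frac{h_i'}{h_i} f'\right) h_i^2\, g_{N_i}(V,W).
\end{equation*}
Because the left-hand side is independent of $s$ whereas the parenthesis depends only on $s$, the factor $\mu_i := (\cdots)\, h_i^2$ is necessarily constant in $s$, so $\mathrm{Ric}_{N_i} = \mu_i g_{N_i}$. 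For $r_i \geq 2$ this is exactly the Einstein condition on $N_i$ with constant $\mu_i$, while for $r_i = 1$ the identity $\mathrm{Ric}_{N_i} = 0$ is automatic and forces $\mu_i = 0$. Solving the displayed relation for $\lambda$ then yields the second and third equations of \eqref{SistemaTriplo} for $i = 1$ and $i = 2$ respectively.

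The converse is a direct verification: given Einstein fibers and the three identities, reversing the above computations reproduces $\mathrm{Ric} + \nabla^2 f = \lambda g$ on every slot. The only nonroutine step is the constancy argument that promotes $\mu_i$ from a function of $s$ to a genuine constant, thereby delivering the Einstein property of each higher-dimensional fiber; everything else is a bookkeeping exercise in tracking the cross-terms $h_i' h_\ell'/(h_i h_\ell)$ produced by both warping functions acting simultaneously on a single fiber slot.
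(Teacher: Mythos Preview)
Your proof is correct and follows precisely the approach the paper indicates: the paper itself omits all details and states only that the result ``follows from a straightforward computation using Lemma \ref{warped}.'' Your slot-by-slot verification of $\mathrm{Ric}+\nabla^2 f=\lambda g$ along $TI\oplus TN_1\oplus TN_2$, together with the separation-of-variables argument forcing $\mu_i$ to be constant, is exactly the intended computation.
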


It is convenient to make the following observations about a gradient almost Ricci soliton which is not Einstein, and is a multiply warped product with a one-dimensional base and exactly two fibers, counted according to Remark \ref{grmnt_wrpngfnctn}:
\begin{enumerate}
	\item The case $\lambda_{1}=\lambda_{2}$ does not happen, as by \eqref{eqXiRelFuncWarp} it would imply that $h_{1}$ is a constant multiple of $h_{2}$, violating Remark \ref{grmnt_wrpngfnctn};
	\item If the number of eigenvalues of the Ricci tensor is two, then the eigenspace corresponding to $\lambda_{1}$ is $(1+r_{1})$-dimensional, and is tangent to $I\times N_{1}^{r_{1}}$. In this case, the eigenspace corresponding to $\lambda_{2}$ is $r_{2}$-dimensional, and is tangent to $N_{2}^{r_{2}}$;
	\item If the number of eigenvalues of the Ricci tensor is three, then there is one eigenvalue distinct from $\lambda_{1}$ and $\lambda_{2}$, whose eigenspace is $1$-dimensional and is tangent to $I$. In this case, the eigenspace corresponding to $\lambda_{i}$ is $r_{i}$-dimensional, and is tangent to $N_{i}^{r_{i}}$.
\end{enumerate}

When we have two fibers, the condition of having harmonic Weyl curvature is not automatically satisfied, as it is when we have only one fiber. Thus, there is no result such as Proposition \ref{harmWarp} when there are two fibers. The results below will be used to analyze the case in which the Ricci tensor has exactly two eigenvalues, leading to Theorem \ref{pendT14'}.

In the rest of this subsection, we rewrite equations \eqref{SistemaTriplo} in a way that makes it easier to manipulate.

From now on, we are going to denote the eigenvalues of the Ricci tensor by $R_{11}$, $R_{22}$ and $R_{33}$. By the discussion above, we cannot have $\mathrm{Ric}_{22}\neq \mathrm{Ric}_{33}$. By making use of Lemma \ref{warped}, the eigenvalues of the Ricci tensor are given in terms of the warping functions and the dimensions of the fibers by
\begin{align}
	&\mathrm{Ric}_{11}=-r_{1}\frac{h''_{1}}{h_{1}}-r_{2}\frac{h''_{2}}{h_{2}}\label{neq1}\\
	&\mathrm{Ric}_{22}=-\frac{h''_{1}}{h_{1}}-(r_{1}-1)\left(\frac{h'_{1}}{h_{1}}\right)^2-r_{2}\frac{h'_{1}h'_{2}}{h_{1}h_{2}}+\frac{\mu_{1}}{h^{2}_{1}}\label{neq2}\\
	&\mathrm{Ric}_{33}=-\frac{h''_{2}}{h_{2}}-(r_{2}-1)\left(\frac{h'_{2}}{h_{2}}\right)^2-r_{1}\frac{h'_{1}h'_{2}}{h_{1}h_{2}}+\frac{\mu_{2}}{h^{2}_{2}}\label{neq3}
\end{align}

It is convenient to set the functions $a$ and $b$ as
\begin{align}\label{defab}
	a=\frac{h'_{1}}{h_{1}}\ \ \ \ \text{and}\ \ \ \ b=\frac{h'_{2}}{h_{2}}.
\end{align}
Observe that we cannot have $a=b$, as it would be equivalent to $\mathrm{Ric}_{22}=\mathrm{Ric}_{33}$, which is a contradiction.

The functions $a$ and $b$ allow to rewrite equations \eqref{neq1}, \eqref{neq2} and \eqref{neq3} as
\begin{align}
	&\mathrm{Ric}_{11}=-r_{1}(a'+a^2)-r_{2}(b'+b^2)\label{neq12}\\
	&\mathrm{Ric}_{22}=-a'-r_{1}a^2-r_{2}ab+\frac{\mu_{1}}{h^{2}_{1}}\label{neq22}\\
	&\mathrm{Ric}_{33}=-b'-r_{2}b^2-r_{1}ab+\frac{\mu_{2}}{h^{2}_{2}}\label{neq32}
\end{align}

With this notation, it is easy to prove the following proposition.
\begin{proposition}\label{prop71PEND} Let $(I\times_{h_1}N_1^{r_{1}} \times_{h_2}N_2^{r_{2}},g,f,\lambda)$ be a multiply warped product almost Ricci soliton, where $N_i$ is an Einstein manifold if $r_{i}\geq2$, with Einstein constants $\mu_{i}$. Then equation \eqref {fundeq} is equivalent to
	\begin{align}
		&f''=r_{1}(a'+a^2)+r_{2}(b'+b^2)+\lambda\label{neq14}\\
		&f'a=a'+r_{1}a^2+r_{2}ab-\frac{\mu_{1}}{h^{2}_{1}}+\lambda\label{neq24}\\
		&f'b=b'+r_{2}b^2+r_{1}ab-\frac{\mu_{2}}{h^{2}_{2}}+\lambda\label{neq34}
	\end{align}
\end{proposition}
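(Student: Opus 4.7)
The statement is purely a rewriting of the system in Proposition \ref{localstrucDupVBhr} using the substitutions \eqref{defab}, so the plan is essentially algebraic. The key observation is the identity
\[
\frac{h''}{h}=\left(\frac{h'}{h}\right)'+\left(\frac{h'}{h}\right)^{2},
\]
valid for any positive smooth function $h$, which in the present notation reads $h_1''/h_1=a'+a^2$ and $h_2''/h_2=b'+b^2$. These two identities are the only nontrivial ingredient; the rest is bookkeeping.

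First, I would substitute these expressions into the first equation of \eqref{SistemaTriplo}, namely $\lambda=f''-r_1(h_1''/h_1)-r_2(h_2''/h_2)$, and isolate $f''$ to obtain \eqref{neq14}. Next, I would take the second equation of \eqref{SistemaTriplo}, replace $h_1''/h_1$ by $a'+a^2$, rewrite $(h_1')^2/h_1^2$ as $a^2$ and $h_1'h_2'/(h_1 h_2)$ as $ab$, to get
\[
\lambda=\frac{\mu_1}{h_1^{2}}+af'-(a'+a^{2})-(r_1-1)a^{2}-r_2 ab,
\]
and then collect the $a^2$ terms and solve for $af'$, yielding \eqref{neq24}. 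The identical procedure applied to the third equation of \eqref{SistemaTriplo} produces \eqref{neq34}. The converse direction is obtained by reversing the same steps, so the equivalence is immediate.

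There is no real obstacle here; the statement is a convenient repackaging that will be exploited in the subsequent two-fiber analysis, and the entire proof fits in a few lines of substitution. I would therefore simply state that the result follows by inserting \eqref{defab} and the identity above into the system \eqref{SistemaTriplo}, and regrouping terms.
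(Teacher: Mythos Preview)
Your proposal is correct and matches the paper's approach exactly: the paper merely states that ``with this notation, it is easy to prove the following proposition,'' and the intended proof is precisely the substitution of \eqref{defab} together with the identity $h''/h=(h'/h)'+(h'/h)^{2}$ into the system \eqref{SistemaTriplo}, followed by regrouping terms.
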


When there are two fibers, the harmonicity of the Weyl tensor for gradient almost Ricci solitons is equivalent to the following condition.

\begin{proposition}
    Let $(I\times_{h_1}N_1^{r_{1}} \times_{h_2}N_2^{r_{2}},g,f,\lambda)$ be a multiply warped product almost Ricci soliton, where $N_i$ is an Einstein manifold if $r_{i}\geq2$, with Einstein constants $\mu_{i}$. Then $I\times_{h_1}N_1^{r_{1}} \times_{h_2}N_2^{r_{2}}$ has harmonic Weyl tensor if and only if
    \begin{align}\label{symmet}
        a'+a^2=b'+b^2,
    \end{align}
    or, equivalently, if and only if
    \begin{align}\label{symmet_hs}
        \frac{h''_{1}}{h_{1}}=\frac{h''_{2}}{h_{2}}.
    \end{align}
\end{proposition}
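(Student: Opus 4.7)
The strategy is to apply Lemma \ref{bari}, which for $n\ge 4$ characterizes harmonic Weyl curvature as the identity
$$
\mathrm{Rm}(\nabla f, X, Y, Z) = \frac{1}{n-1}\bigl(\mathrm{Ric}(\nabla f, Y)\,g(X,Z) - \mathrm{Ric}(\nabla f, Z)\,g(X,Y)\bigr)
$$
for all $X,Y,Z$, and then to match the two sides against the explicit curvature components of the multiply warped product. Since $\nabla f = f'\,E_1$ with $E_1=\partial_s$, and $E_1$ is an eigenvector of $\mathrm{Ric}$ with eigenvalue $\mathrm{Ric}_{11}$ (by item (1) of Lemma \ref{caochen}), one has $\mathrm{Ric}(\nabla f, Y) = f'\,\mathrm{Ric}_{11}\,g(E_1, Y)$; canceling $f'\neq 0$ on the regular set reduces the identity to
$$
(\star) \qquad \mathrm{Rm}(E_1, X, Y, Z) = \frac{\mathrm{Ric}_{11}}{n-1}\bigl(g(E_1,Y)\,g(X,Z) - g(E_1,Z)\,g(X,Y)\bigr).
$$

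I would then evaluate $(\star)$ on the natural orthonormal frame $\{E_1, V_a, W_b\}$ adapted to the factors, with $V_a$ unit along $N_1$ and $W_b$ unit along $N_2$. Using the connection formulas of Lemma \ref{warped} (in particular $\nabla_{E_1}V = (h_1'/h_1)V$, $\nabla_VE_1 = (h_1'/h_1)V$, and $\nabla_VW=0$ when $V,W$ lie in different fibers), a direct computation of $R(E_1,\cdot)\cdot$ yields the only non-automatically-vanishing components
$$
\mathrm{Rm}(E_1, V, E_1, V) = -\frac{h_1''}{h_1},\qquad \mathrm{Rm}(E_1, W, E_1, W) = -\frac{h_2''}{h_2},
$$
for $V$ unit in $TN_1$ and $W$ unit in $TN_2$. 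All cross-fiber components $\mathrm{Rm}(E_1,V,\cdot,W)$ and all one-fiber three-vector components $\mathrm{Rm}(E_1,V,V',V'')$ vanish: the former because the warped connection decouples different fibers, the latter because $R(E_1,V)V'$ lies along $E_1$ and hence is orthogonal to the remaining slot.

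Matching $(\star)$ with these components, every frame configuration except $(X,Y,Z)=(V,E_1,V)$ and $(W,E_1,W)$ with matching fiber vectors leads to $0=0$. The two nontrivial configurations yield
$$
-\frac{h_1''}{h_1} = \frac{\mathrm{Ric}_{11}}{n-1} \qquad \text{and} \qquad -\frac{h_2''}{h_2} = \frac{\mathrm{Ric}_{11}}{n-1}.
$$
Using \eqref{neq1}, which gives $\mathrm{Ric}_{11} = -r_1(h_1''/h_1) - r_2(h_2''/h_2)$, together with $r_1+r_2=n-1$, these two equations are together equivalent to the single equation $h_1''/h_1 = h_2''/h_2$: indeed, once this common equality holds, both ratios automatically equal $-\mathrm{Ric}_{11}/(n-1)$. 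This establishes \eqref{symmet_hs}, and the equivalence with \eqref{symmet} follows from the identity $h_i''/h_i = (h_i'/h_i)' + (h_i'/h_i)^2$.

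The main task is the curvature bookkeeping of the second paragraph, namely the systematic enumeration of the possible configurations of $(X,Y,Z)$ in $\{E_1\}\cup TN_1\cup TN_2$ and the verification that all but the two diagonal configurations give vanishing contributions on both sides of $(\star)$. While each individual case is a short computation from Lemma \ref{warped}, the enumeration must be exhaustive for the clean collapse of $(\star)$ to \eqref{symmet_hs} to be complete; after that, the equivalence is purely arithmetical via the Ricci formula from \eqref{neq1}.
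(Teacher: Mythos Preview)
Your proposal is correct and essentially identical to the paper's proof: both invoke Lemma~\ref{bari}, compute the components of $\mathrm{Rm}(\nabla f,\cdot,\cdot,\cdot)$ on the multiply warped product (the paper writes the tensors $P$ and $Q$ globally in coordinates, you evaluate on a frame), and match against $\mathrm{Ric}_{11}/(n-1)$ to collapse everything to $h_1''/h_1=h_2''/h_2$. One small fix: your appeal to item~(1) of Lemma~\ref{caochen} for ``$E_1$ is a Ricci eigenvector'' is circular, since that lemma already assumes zero Cotton tensor; instead justify it directly from Lemma~\ref{warped}, which gives $\mathrm{Ric}(\partial_s,V)=0$ for any fiber vector $V$ purely from the warped-product structure.
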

\begin{proof}
    We begin by recalling Lemma \ref{bari}, which states that $I\times_{h_1}N_1^{r_{1}} \times_{h_2}N_2^{r_{2}}$ has harmonic Weyl tensor if and only if the tensors $P$ and $Q$, defined respectively as
	\begin{align*}
	    P(X, Y, Z) = \mathrm{Rm
}(\nabla f, X, Y, Z)
	\end{align*}
	and
	\begin{align*}
	Q(X, Y, Z) = \frac{1}{n-1} \big(\mathrm{Ric}(\nabla f, Y) g(X, Z) - \mathrm{Ric}(\nabla f, Z) g(X, Y)\big),
	\end{align*}
	coincide, i.e., $P = Q$.
	Now, let $(x_\ell)_{1 \leq \ell \leq r_1}$ and $(y_\beta)_{1 \leq \beta \leq r_2}$ be local coordinate systems on the fibers $N_1$ and $N_2$, respectively. Using the standard curvature formulas for multiply warped products, the tensor $P$ can be expressed as:
	\begin{align*}
	P = (a' + a^2) \sum_{\ell=1}^{r_1} \mathrm{d} x^\ell \otimes (\mathrm{d} x^\ell \wedge \mathrm{d} t) 
	+ (b' + b^2) \sum_{\beta=1}^{r_2} \mathrm{d} y^\beta \otimes (\mathrm{d} y^\beta \wedge \mathrm{d} t).
	\end{align*}
	Similarly, the tensor $Q$ is given by:
	\begin{align*}
	Q = \frac{r_1 (a' + a^2) + r_2 (b' + b^2)}{n-1} 
	\left( \sum_{\ell=1}^{r_1} \mathrm{d} x^\ell \otimes (\mathrm{d} x^\ell \wedge \mathrm{d} t) 
	+ \sum_{\beta=1}^{r_2} \mathrm{d} y^\beta \otimes (\mathrm{d} y^\beta \wedge \mathrm{d} t) \right).
	\end{align*}
	Therefore, $P$ and $Q$ coincide if, and only if,
	\begin{align*}
        a' + a^2 &= \frac{r_1 (a' + a^2) + r_2 (b' + b^2)}{n-1}, \\
    	b' + b^2 &= \frac{r_1 (a' + a^2) + r_2 (b' + b^2)}{n-1}.
	\end{align*}
	or, equivalently, if and only if
	\begin{align*}
        a' + a^2 = b' + b^2.
	\end{align*}

\end{proof}

\begin{remark}\label{rmk_mult_warp_more_fibers}
	It is easy to check that the proof above naturally extends to the case of multiply warped products with more than two fibers. Namely, if $\ (I\times_{h_1}N_1^{r_{1}} \times_{h_2}N_2^{r_{2}}\times\cdots\times_{h_k}N_k^{r_{k}},g,f,\lambda)$ is a multiply warped product almost Ricci soliton, where $N_i$ is an Einstein manifold if $r_{i}\geq2$, with Einstein constants $\mu_{i}$, then $I\times_{h_1}N_1^{r_{1}} \times_{h_2}N_2^{r_{2}}\times\cdots\times_{h_k}N_k^{r_{k}}$ has harmonic Weyl tensor if and only if
	\begin{align*}
	    a'_i+a^2_i=a'_j+a^2_j
	\end{align*}
	for all $1\leq i,j\leq k$, where $a_i=h'_i/h_i$. 
\end{remark}

In the next lemma we use \eqref{symmet} to present $\lambda$ only in terms of the warping functions.

\begin{proposition}
    Let $(I\times_{h_1}N_1^{r_{1}} \times_{h_2}N_2^{r_{2}},g,f,\lambda)$ be a multiply warped product almost Ricci soliton, where $N_i$ is an Einstein manifold if $r_{i}\geq2$, with Einstein constants $\mu_{i}$. Assume it has harmonic Weyl tensor. Then the function $\lambda$ can be expressed in the following way
    \begin{align}\label{lambda_good}
        \lambda=-a'-r_{1}a^2-b'-r_{2}b^2+\frac{\mu_{1}}{h^{2}_{1}}+\frac{\mu_{2}}{h^{2}_{2}}
    \end{align}
\end{proposition}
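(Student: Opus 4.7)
The plan is to eliminate $f$ from the soliton system \eqref{neq14}--\eqref{neq34} and read off $\lambda$ directly. First, subtracting \eqref{neq34} from \eqref{neq24} cancels $\lambda$ and leaves
\[
(a-b)f' = (a'-b') + (a-b)(r_1 a + r_2 b) - \bigl(\mu_1/h_1^2 - \mu_2/h_2^2\bigr).
\]
The harmonic Weyl identity \eqref{symmet} rewrites $a'-b' = b^2-a^2 = -(a-b)(a+b)$, and since $h_1$ and $h_2$ are not proportional (Remark \ref{grmnt_wrpngfnctn}) we have $a\neq b$, so dividing by $a-b$ yields
\[
f' = (r_1-1)a + (r_2-1)b + \frac{\mu_1/h_1^2 - \mu_2/h_2^2}{b-a}.
\]

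The second step is to differentiate this expression with respect to $s$. Using $(\mu_i/h_i^2)' = -2(h_i'/h_i)(\mu_i/h_i^2)$ and $(b-a)' = b'-a' = a^2-b^2$ from \eqref{symmet}, a direct quotient-rule computation shows the numerator factors as $(b-a)^2(\mu_1/h_1^2+\mu_2/h_2^2)$, so the derivative of the quotient collapses to $\mu_1/h_1^2+\mu_2/h_2^2$. Consequently,
\[
f'' = (r_1-1)\,a' + (r_2-1)\,b' + \frac{\mu_1}{h_1^2} + \frac{\mu_2}{h_2^2}.
\]

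To conclude, \eqref{neq14} rearranges to $\lambda = f'' - r_1(a'+a^2) - r_2(b'+b^2)$. Substituting the formula for $f''$, the $a'$-terms combine as $(r_1-1)a' - r_1 a' = -a'$ and likewise the $b'$-terms give $-b'$, producing exactly \eqref{lambda_good}. Note that the final step does not require \eqref{symmet} again, since the harmonic Weyl condition has already been absorbed in the derivation of the formula for $f''$.

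The principal obstacle is the quotient differentiation in the second step: it is not obvious a priori that $\tfrac{d}{ds}\bigl((\mu_1/h_1^2-\mu_2/h_2^2)/(b-a)\bigr)$ simplifies to the clean sum $\mu_1/h_1^2+\mu_2/h_2^2$. This cancellation is the essential use of \eqref{symmet} and is precisely what permits expressing $\lambda$ purely in terms of the warping data.
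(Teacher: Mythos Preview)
Your proof is correct and follows essentially the same approach as the paper: both subtract the two fiber equations \eqref{neq24}--\eqref{neq34}, invoke the harmonic Weyl identity \eqref{symmet}, differentiate, and combine with the base equation \eqref{neq14}. The only cosmetic difference is that you divide by $a-b$ before differentiating (so you handle a quotient whose derivative collapses to $\mu_1/h_1^2+\mu_2/h_2^2$), whereas the paper keeps the factor and differentiates the product $(a-b)f'$ in two different ways and equates; both routes exploit the same cancellation.
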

\begin{proof}
To prove the proposition, first compute the difference between \eqref{neq24} and \eqref{neq34}, and use \eqref{symmet} to obtain
\begin{align*}
    (a-b)f'
    &=((r_{1}-1)a+(r_{2}-1)b)(a-b)-\frac{\mu_{1}}{h^{2}_{1}}+\frac{\mu_{2}}{h^{2}_{2}}.
\end{align*}
Now, taking the derivative on both sides of the equality above, and using \eqref{defab} and \eqref{symmet}, we deduce that
\begin{align}\label{frst_way}
\begin{split}
    ((a-b)f')'
            =&((r_{1}-1)a'+(r_{2}-1)b')(a-b)+2\frac{\mu_{1}}{h^{2}_{1}}a-2\frac{\mu_{2}}{h^{2}_{2}}b\\
    &-(a+b)(a-b)((r_{1}-1)a+(r_{2}-1)b).
\end{split}
\end{align}
On the other hand, insert \eqref{symmet} and \eqref{neq14} into $((a-b)f')'=(a'-b')f'+(a-b)f''$ and then reorganize the terms, to get
\begin{align}\label{scnd_way}
\begin{split}
    ((a-b)f')'
    =&-(a+b)(a-b)((r_{1}-1)a+(r_{2}-1)b)+(a+b)\left(\frac{\mu_{1}}{h^{2}_{1}}-\frac{\mu_{2}}{h^{2}_{2}}\right)\\
    &+(a-b)(r_{1}a'+r_{2}b')+(a-b)(r_{1}a^2+r_{2}b^2+\lambda)
\end{split}
\end{align}
Finally, equate \eqref{frst_way} and \eqref{scnd_way}, and eliminate the similar terms, to conclude that
\begin{align*}
    (a-b)\left(\frac{\mu_{1}}{h^{2}_{1}}+\frac{\mu_{2}}{h^{2}_{2}}-a'-b'\right)=(a-b)(r_{1}a^2+r_{2}b^2+\lambda).
\end{align*}
Use that $a\neq b$ to cancel out the difference $a-b$ in both sides, and this proves the proposition.
\end{proof}

\section{Harmonic Weyl tensor: exactly two eigenvalues}\label{quantity_multip}

\hspace{.5cm} In this subsection we prove Theorem \ref{pendT14'}. We are assuming that $n\geq4$ and that the Weyl tensor of the gradient almost Ricci soliton is harmonic. In this case, the Cotton tensor vanishes, which is equivalent to the Schouten tensor being Codazzi. Thus, we can use all results of Section \ref{cotton_results}. We are also considering the case where the Ricci tensor has exactly two eigenvalues, that is $\mathrm{Ric}_{11}\in\{\mathrm{Ric}_{22},\mathrm{Ric}_{33}\}$, with $\mathrm{Ric}_{22}\neq \mathrm{Ric}_{33}$. Without loss of generality, assume that $\mathrm{Ric}_{11}=\mathrm{Ric}_{22}$. In this case, we have the following lemma.
\begin{lemma}\label{f_determines_ever}
    Assume that $\mathrm{Ric}_{11}=\mathrm{Ric}_{22}$. Then:
    \begin{enumerate}
        \item\label{f_determines_ever1} There is a constant $C_{1}\neq0$ such that $h_{1}=C_{1}^{-1}f'$;
        \item\label{f_determines_ever2} $\lambda=f''-(n-1)\frac{f'''}{f'}$,
        \item\label{f_determines_ever3} If $h_{2}$ is not constant, then there are $C_{2},C_{3}\in\mathbb{R}$ such that $h_{2}=-(n-1)C_{1}^{-1}e^{-\frac{1}{n-1}(f-C_{2})}+C_{3}$.
    \end{enumerate}
\end{lemma}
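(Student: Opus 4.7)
The plan is to handle the three items in order. For item $(\ref{f_determines_ever1})$, I will combine item $(\ref{cao4})$ of Lemma \ref{caochen}, which gives $\mathrm{Ric}_{11} = \lambda - f''$, with \eqref{eqXiRelFuncWarp} rewritten in the present notation as $\mathrm{Ric}_{22} = \lambda - a f'$. The hypothesis $\mathrm{Ric}_{11} = \mathrm{Ric}_{22}$ then forces $a = f''/f'$, i.e.\ $(\log h_1)' = (\log f')'$, and integration yields $h_1 = C_1^{-1} f'$ for some constant $C_1 \neq 0$.

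For item $(\ref{f_determines_ever2})$, I start from $f'' = af'$, differentiate once to obtain $f''' = (a' + a^2) f'$, and substitute into equation \eqref{neq14}. Invoking the harmonic Weyl identity \eqref{symmet} (i.e.\ $b' + b^2 = a' + a^2$), the right-hand side of \eqref{neq14} collapses to $(n-1)(a' + a^2) + \lambda = (n-1) f'''/f' + \lambda$, and solving for $\lambda$ gives the claim.

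For item $(\ref{f_determines_ever3})$, the strategy is to produce a first-order linear ODE linking $h_2$ and $f$. The main technical step, which I expect to be the hardest part, is to establish the identity
\begin{align*}
b\bigl[(f')^2 - (n-1)f''\bigr] = -(n-1)f'''.
\end{align*}
To derive it, I will eliminate $\mu_1/h_1^2$ between equation \eqref{neq34} (after replacing $\lambda$ using \eqref{lambda_good}) and the explicit expression for $\mu_1/h_1^2$ obtained from $\mathrm{Ric}_{11} = \mathrm{Ric}_{22}$ together with \eqref{symmet}. The resulting intermediate identity $f'b + (n-1)(a' + a^2 - ab) = 0$, after substitution of $a = f''/f'$ and $a' + a^2 = f'''/f'$, gives the displayed equation. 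Combining it with $h_2'' = (b'+b^2) h_2 = (f'''/f') h_2$ (again from \eqref{symmet}), a short direct computation shows $(h_2 + (n-1) h_2'/f')' = 0$, so $h_2 + (n-1)h_2'/f' = C_3$ for some constant $C_3$. Integrating the resulting linear ODE $(n-1)h_2' = -f'(h_2 - C_3)$ gives $h_2 - C_3 = K e^{-f/(n-1)}$ with $K \neq 0$ since $h_2$ is nonconstant, and writing $K = -(n-1) C_1^{-1} e^{C_2/(n-1)}$ for a suitable $C_2$ yields the stated formula.
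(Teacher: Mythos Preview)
Your proof is correct and follows essentially the same route as the paper. For items (\ref{f_determines_ever1}) and (\ref{f_determines_ever2}) the arguments are nearly identical (you cite Lemma~\ref{caochen} and \eqref{eqXiRelFuncWarp} where the paper cites \eqref{neq14}--\eqref{neq24}, but both amount to $f''=af'$ and then \eqref{symmet}). For item (\ref{f_determines_ever3}), both you and the paper arrive at the same key identity $f'b+(n-1)(a'+a^2-ab)=0$ by eliminating $\mu_1/h_1^2$ via \eqref{lambda_good} and $\mathrm{Ric}_{11}=\mathrm{Ric}_{22}$; the only difference is in the final integration: the paper divides by $b$, recognizes the expression as $-(n-1)\bigl(\ln(h_2'/h_1)\bigr)'$, and integrates twice, whereas you recast the identity as the conservation law $\bigl(h_2+(n-1)h_2'/f'\bigr)'=0$ and solve the resulting first-order linear ODE for $h_2$. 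Both are short and equivalent; your formulation has the mild advantage of avoiding the division by $b$ and the logarithm of $h_2'/h_1$ (with its attendant sign bookkeeping).
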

\begin{proof}
    First of all, notice that the assumption $\mathrm{Ric}_{11}=\mathrm{Ric}_{22}$, combined with \eqref{neq12}-\eqref{neq22} and \eqref{neq14}-\eqref{neq24}, implies immediately that $f''=af'$. Now, using the definition of $a$, we can write
\begin{align}\label{frst_Key}
    \frac{f''}{f'}=\frac{h'_{1}}{h_{1}},
\end{align}
implying the existence of $C_{1}\neq0$ such that $f'=C_{1}h_{1}$, and then proving item \ref{f_determines_ever1}. To see that item \ref{f_determines_ever2} is true, just insert item \ref{f_determines_ever1} into the first equation of \eqref{SistemaTriplo} to get
\begin{align}
    \lambda=f''-(n-1)\frac{h''_{1}}{h_{1}}=f''-(n-1)\frac{f'''}{f'},
\end{align}
where we have used \eqref{symmet_hs} and $r_1+r_2=n-1$ in the first equality.

To the proof of item \ref{f_determines_ever3}, notice that putting $\mathrm{Ric}_{11}=\mathrm{Ric}_{22}$ together with \eqref{neq12}-\eqref{neq22} implies
\begin{align*}
    -(n-1)(b'+b^2)+r_{2}ab=-a'-r_{1}a^2+\frac{\mu_{1}}{h^{2}_{1}},
\end{align*}
which can be coupled with \eqref{lambda_good}, giving rise to
\begin{align*}
    \lambda=-(r_{1}+r_{2}+1)b'-(r_{1}+2r_{2})b^2+r_{2}ab+\frac{\mu_{2}}{h^{2}_{2}}.
\end{align*}
Now, using the equation above to simplify \eqref{neq34}, we obtain the equation
\begin{align*}
    f'b=(n-1)(-b'-b^2+ab).
\end{align*}
Finally, assume that $h_{2}$ is not constant. Thus, we may divide the equality above by $b$ to get
\begin{align*}
    f'=(n-1)(-\frac{b'}{b}-b+a)=-(n-1)\left(\ln\left(\frac{h_{2}b}{h_{1}}\right)\right)'=-(n-1)\left(\ln\left(\frac{h'_{2}}{h_{1}}\right)\right)',
\end{align*}
where we have used the definitions of $a$ and $b$. As a consequence, there is a constant $C_{2}$ such that
\begin{align*}
    f=-(n-1)\ln\left(\frac{h'_{2}}{h_{1}}\right)+C_{2}.
\end{align*}
The identity above gives the following expression for $h_{2}'$,
\begin{align*}
    h'_{2}=h_{1}e^{-\frac{1}{n-1}(f-C_{2})}=C_{1}^{-1}f'e^{-\frac{1}{n-1}(f-C_{2})}=-(n-1)C_{1}^{-1}\left(e^{-\frac{1}{n-1}(f-C_{2})}\right)',
\end{align*}
where in the second equality we have used item \ref{f_determines_ever1}. This gives a constant $C_{3}$ such that
\begin{align*}
        h_{2}=-(n-1)C_{1}^{-1}e^{-\frac{1}{n-1}(f-C_{2})}+C_{3},
\end{align*}
proving item \ref{f_determines_ever3} and finishing the proof of the lemma.
\end{proof}

Now we use these representations for $h_{1},\ h_{2}\ \text{and}\ \lambda$ in terms of $f$ and its derivatives to obtain two differential equations for $f$.

\begin{lemma}
Assume that $\mathrm{Ric}_{11}=\mathrm{Ric}_{22}$. If $h_{2}$ is not constant, then $f$ must satisfy the following equations:
\begin{align}
    (n-1)f'''\Lambda&=(n-1)f'f''-(f')^3\label{sit_f_1'}\\
    ((n-2)f'f'''-(r_{1}-1)(f'')^2+C_{1}^{2}\mu_{1})\Lambda&=r_{2}(f')^2f''\label{sit_f_2'}
\end{align}
where the real constants $C_{1},\ C_{2}$ and $C_{3}$ are the same as in Lemma \ref{f_determines_ever}, and
\begin{align}
    \Lambda=C_{1}C_{3}e^{\frac{1}{n-1}(f-C_{2})}-(n-1).
\end{align}
\end{lemma}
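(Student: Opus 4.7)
The plan is to derive the two ODEs by substituting into the almost Ricci soliton system the expressions for $h_1$ and $h_2$ supplied by Lemma \ref{f_determines_ever}, and then picking out two independent combinations: one coming from harmonic Weyl and one from $\mathrm{Ric}_{11}=\mathrm{Ric}_{22}$. The algebra is all routine once the right substitutions are in place.

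First I would set $\alpha=\frac{1}{n-1}(f-C_2)$ and record the two elementary identities that make everything collapse later. From item \ref{f_determines_ever1} we have $a=h_1'/h_1=f''/f'$ and hence $h_1''/h_1=f'''/f'$. Differentiating the explicit formula for $h_2$ gives $h_2'=C_1^{-1}f'e^{-\alpha}$, and the clean observation
\[ C_1\,h_2\,e^{\alpha}=-(n-1)+C_1C_3e^{\alpha}=\Lambda \]
yields $b=h_2'/h_2=f'/\Lambda$. I would also note $\Lambda'=\frac{f'}{n-1}\bigl(\Lambda+(n-1)\bigr)$, as this is what makes $b'+b^2$ simplify.

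Next, for equation \eqref{sit_f_1'}, I would invoke the harmonic Weyl identity \eqref{symmet_hs}, namely $f'''/f'=h_1''/h_1=h_2''/h_2=b'+b^2$. Differentiating $b=f'/\Lambda$ and using the formula for $\Lambda'$ gives
\[ b'+b^2=\frac{f''}{\Lambda}-\frac{(f')^2}{(n-1)\Lambda}. \]
Setting this equal to $f'''/f'$ and clearing denominators by multiplying by $(n-1)f'\Lambda$ produces \eqref{sit_f_1'} directly. For equation \eqref{sit_f_2'}, I would combine $\mathrm{Ric}_{11}=\mathrm{Ric}_{22}$ from \eqref{neq1}--\eqref{neq2} with harmonic Weyl to obtain the relation
\[ (n-2)\,\frac{h_1''}{h_1}=(r_1-1)a^2+r_2\,ab-\frac{\mu_1}{h_1^2}. \]
Substituting $h_1''/h_1=f'''/f'$, $a=f''/f'$, $ab=f''/\Lambda$, and $h_1^{-2}=C_1^2/(f')^2$, then multiplying through by $(f')^2\Lambda$, yields \eqref{sit_f_2'}.

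The whole derivation is bookkeeping; the only point that is not entirely mechanical is spotting the identity $C_1h_2e^{\alpha}=\Lambda$, which is what lets one rewrite $b$ so compactly as $f'/\Lambda$. Without that observation the exponentials stay entangled with $h_2$ and the resulting expressions do not visibly reduce to the stated polynomial identities in $f',f'',f'''$ and $\Lambda$. Once this substitution is in hand, both ODEs follow by simple multiplication.
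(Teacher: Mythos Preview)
Your proposal is correct and essentially mirrors the paper's proof. For \eqref{sit_f_1'} both you and the paper use the harmonic Weyl identity $h_1''/h_1=h_2''/h_2$; the paper computes $h_2''/h_2$ directly from the explicit formula for $h_2$, while you pass through $b=f'/\Lambda$ and $b'+b^2$, which amounts to the same calculation and is in fact the substitution the paper records explicitly a few lines later. For \eqref{sit_f_2'} there is a minor organizational difference: the paper starts from the soliton equation \eqref{neq24}, substitutes $\lambda=f''-(n-1)f'''/f'$ from Lemma~\ref{f_determines_ever}, and cancels the two copies of $f''$, whereas you bypass $\lambda$ entirely by equating \eqref{neq1} and \eqref{neq2} directly and using $h_2''/h_2=h_1''/h_1$ to reach the purely geometric relation $(n-2)h_1''/h_1=(r_1-1)a^2+r_2ab-\mu_1/h_1^2$. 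Both routes collapse to the same intermediate identity before the final multiplication by $(f')^2\Lambda$; your path is marginally cleaner in that it never introduces $\lambda$ only to eliminate it, but the content is identical.
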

\begin{proof}
To prove this lemma, we will use the expressions for $h_{1},\ h_{2}\ \text{and}\ \lambda$, given in Lemma \ref{f_determines_ever}.

To get \eqref{sit_f_1'}, consider the harmonic Weyl condition \eqref{symmet_hs}. In this case:
\begin{align*}
	\frac{f'''}{f'}=\frac{h''_{1}}{h_{1}}=\frac{h''_{2}}{h_{2}}=\frac{C^{-1}_{1}(f''-\frac{1}{n-1}(f')^2)e^{-\frac{1}{n-1}(f-C_{2})}}{-C_{1}^{-1}(n-1)e^{-\frac{1}{n-1}(f-C_{2})}+C_{3}}=\frac{f''-\frac{1}{n-1}(f')^2}{C_{1}C_{3}e^{\frac{1}{n-1}(f-C_{2})}-(n-1)}.
\end{align*}
Using the definition of $\Lambda$, given in the statement of the lemma, we obtain \eqref{sit_f_1'}.

Now we prove \eqref{sit_f_2'}. Notice that \eqref{neq24} can be written as $f'a=a'+a^2+(r_{1}-1)a^2+r_{2}ab-\frac{\mu_{1}}{h^{2}_{1}}+\lambda$. On the other hand, it is not hard to see that
\begin{align*}
    a=\frac{f''}{f'},\ \ a'+a^2=\frac{f'''}{f'}\ \ \text{and}\ \ b=\frac{f'}{\Lambda}.
\end{align*}
Putting all this together, we get
\begin{align*}
f''=\frac{f'''}{f'}+(r_{1}-1)\frac{(f'')^2}{(f')^2}+r_{2}\frac{f''}{\Lambda}-\frac{\mu_{1}C_{1}^{2}}{(f')^2}+f''-(n-1)\frac{f'''}{f'}.
\end{align*}
Canceling out $f''$, multiplying this equation by $(f')^2\Lambda$ and rearranging the terms, we get \eqref{sit_f_2'}.

\end{proof}

Now we treat the solutions of system \eqref{sit_f_1'}-\eqref{sit_f_2'}. It is not hard to see that constant functions solve this system. The next lemma, whose proof can be found in the appendix, asserts that these are the only solutions.

\begin{proposition}\label{constancy}
    Any function satisfying \eqref{sit_f_1'} and \eqref{sit_f_2'} simultaneously must be constant.
\end{proposition}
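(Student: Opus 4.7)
The plan is to argue by contradiction: assume $f$ is nonconstant, so that $f' \neq 0$ on some open subinterval $J \subset I$. A preliminary remark is that one may also assume $f''' \neq 0$ on a dense subset of $J$, for if $f''' \equiv 0$ on an open interval then \eqref{sit_f_1'} reduces to $(f')^2 = (n-1)f''$; differentiating yields $2 f' f'' = (n-1)f''' = 0$, forcing $f'' \equiv 0$ and hence $f' \equiv 0$, a contradiction. After restricting to this dense subset, all denominators appearing below are nonzero.

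The crucial structural fact is that $\Lambda = C_1 C_3 e^{(f-C_2)/(n-1)} - (n-1)$ satisfies the first-order linear identity
\begin{equation*}
(n-1)\,\Lambda' \;=\; f'\,\Lambda + (n-1)f',
\end{equation*}
obtained by directly differentiating its defining formula. At the same time, \eqref{sit_f_1'} rewrites as
\begin{equation*}
\Lambda \;=\; \frac{f'\bigl((n-1)f'' - (f')^2\bigr)}{(n-1)\,f'''},
\end{equation*}
so that $\Lambda$ is itself a rational function of $f', f'', f'''$. Differentiating this rational expression and equating the result with the linear identity above yields algebraic relations among $f'$ and its higher derivatives; iterating this, one uses \eqref{sit_f_1'} at each step to trade $f'''$ against $\Lambda$ and lower derivatives of $f$.

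Substituting the expression for $\Lambda$ into \eqref{sit_f_2'} produces a second rational identity in $f', f'', f'''$. When $r_1 = 1$, the term $(r_1-1)(f'')^2$ drops out of \eqref{sit_f_2'} and the resulting identity simplifies enough that a short elimination immediately yields a nontrivial polynomial relation satisfied by $f'$. When $r_1 > 1$, the elimination is considerably more elaborate: one must combine \eqref{sit_f_2'} and its successive derivatives with \eqref{sit_f_1'} and the linear identity for $\Lambda'$, in a systematic way, so as to eliminate $\Lambda$ together with all derivatives of $f$ beyond $f'$. By Lemma \ref{lem:nontrivial_polynomial}, proved in the Appendix, this procedure does produce a nontrivial polynomial $P(x)$ of degree $12$ such that $P(f'(s)) = 0$ for every $s \in J$. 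This construction — and, crucially, the verification that $P$ is not identically zero — is the principal technical obstacle of the proof and is precisely what is deferred to the Appendix.

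Once $P(f') \equiv 0$ is in hand, the conclusion is immediate: since $P$ has only finitely many roots and $f'$ is continuous, $f'$ must be locally constant; then $f'' = f''' = 0$ on an open set, and \eqref{sit_f_1'} reduces to $(f')^3 = 0$, forcing $f' \equiv 0$ and contradicting $f' \neq 0$ on $J$. Therefore $f$ must be constant, as claimed.
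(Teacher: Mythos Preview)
Your approach is essentially that of the paper: split into the cases $r_1=1$ and $r_1>1$, and in the latter case invoke Lemma~\ref{lem:nontrivial_polynomial} to obtain a nontrivial degree-$12$ polynomial annihilating $f'$, forcing $f'$ constant and hence zero by \eqref{sit_f_1'}. The only point you gloss over is the $r_1=1$ case: you note that the $(r_1-1)(f'')^2$ term drops, but the reason the elimination is truly short is that $r_1=1$ also forces $\mu_1=0$ (a one-dimensional fiber has vanishing Ricci curvature), so the $C_1^2\mu_1$ term disappears as well; with both gone, substituting \eqref{sit_f_1'} into \eqref{sit_f_2'} gives $(n-2)(f')^4=0$ directly, without any further differentiation.
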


Now we are ready to prove Theorem \ref{pendT14'}.

\begin{proof}[{\bf Proof of Theorem \ref{pendT14'}}]
    Assume the gradient almost Ricci soliton $(M,g,f,\lambda)$ is nontrivial, has harmonic Weyl tensor and that its Ricci curvature has exactly two eigenvalues. Now, Theorem \ref{decompwarpint} ensures that around a regular point of $f$, $M$ must be a warped product with a one-dimensional base, and either one or two fibers. We consider each case separately.
    
    In the case of only one fiber, the gradient almost Ricci soliton must be as those described in Corollary \ref{cor_defin_1fiber} around a regular point, and this finishes the analysis in this case. 
    
    Now consider the case of exactly two fibers, and assume by contradiction that $h_{2}$ is not constant. In this case, $h_{1}$, $h_{2}$ and $\lambda$ must be given as in Lemma \ref{f_determines_ever}, and $f$ must satisfy equations \eqref{sit_f_1'} and \eqref{sit_f_2'}. But according to Proposition \ref{constancy}, $f$ must be constant, and we get a contradiction with the nontriviality of the gradient almost Ricci soliton. As a consequence, $h_{2}$ must be constant, say $h_{2}\equiv C$. In this case, using \eqref{symmet_hs} we obtain $h''_{1}\equiv0$, and hence there are constants $A,B\in\mathbb{R}$ such that
    \begin{align}\label{wrpfct_lin}
        h_{1}(s)=As+B.
    \end{align}
    Putting these warping functions in \eqref{SistemaTriplo}, we conclude that
    \begin{align}
			&f''=\lambda, \label{SistemaTriplo'1}\\
			&f'\frac{A}{As+B}=-\frac{A^2}{(As+B)^2}+r_{1}\frac{A^2}{(As+B)^2}-\frac{\mu_{1}}{(As+B)^2}+\lambda,\label{SistemaTriplo'2} \\ 
			&0=-\frac{\mu_{2}}{C}+\lambda.\label{SistemaTriplo'3}
	\end{align} 
    From \eqref{SistemaTriplo'3}, we conclude that $\lambda$ is constant, and from \eqref{SistemaTriplo'1}, we obtain constants $D,E\in\mathbb{R}$ such that
    \begin{align}\label{exp_for_f}
        f(s)=\frac{\lambda}{2}s^2+Ds+E.
    \end{align}
    Now, rewrite \eqref{SistemaTriplo'2} in the following way
    \begin{align*}
        \frac{(\lambda s+D)A}{As+B}=\frac{(r_{1}-1)A^2-\mu_{1}}{(As+B)^2}+\frac{\mu_{2}}{C}.
    \end{align*}
    Using \eqref{SistemaTriplo'3} and that $s$ varies on an open interval, the equality above gives
    \begin{align}\label{step_mid}
        AD-\lambda B=(r_{1}-1)A^2-\mu_{1}.
    \end{align}
    On the other hand, using $\mathrm{Ric}_{22}=\mathrm{Ric}_{11}=0$, \eqref{neq1} and \eqref{neq2}, we conclude that 
     \begin{align}\label{otherrelmu}
        \mu_{1}=(r_{1}-1)A^2.
    \end{align}
    Consequently, from \eqref{step_mid}, we get
    \begin{align}\label{otherrel}
        \lambda B=AD.
    \end{align}
    Observe that we cannot have $A=0$. In fact, if this were the case, then we would have $B=h_{1}>0$, and from \eqref{otherrel} that $\lambda=0$. Thus, the soliton equation would force $\mu_{2}=0$, and then $\mathrm{Ric}_{33}=0$, contradicting the assumption of exactly two distinct eigenvalues for the Ricci tensor of $M$. Thus, we have from \eqref{otherrelmu} that $\mu_{1}=(r_{1}-1)A^2>0$, if $r_{1}>1$.

    By Lemma \ref{warped}, we can see that $B^{r_{1}+1}=I \times_{h_1} N_1^{r_{1}}$ is Ricci-flat, where $h_{1}$ is given by \eqref{wrpfct_lin}. Also notice that $f$ given as in \eqref{exp_for_f} satisfies $\nabla_{B}\nabla_{B}f=\lambda g_{B}$. Normalizing $C$ to have $h_{2}=1$, we conclude the proof of Theorem \ref{pendT14'}.
\end{proof}

\begin{proof}[{\bf Proof of Corollary \ref{cor}}]
    Assume the gradient almost Ricci soliton has harmonic Weyl tensor, its Ricci tensor has exactly two eigenvalues and that $W(\nabla f,\cdot,\cdot,\cdot)$ does not vanish identically. Then we can apply Theorem \ref{pendT14'}, and the condition on the non-vanishing of $W(\nabla f,\cdot,\cdot,\cdot)$ means that $M$ is as described in item \ref{item2'}, namely, that $M$ is isometric to $B^k\times N^{n-k}$, where $(B^{k},g_{B})$ is a complete manifold carrying a vector field $\nabla_{B} f$ satisfying $\nabla_{B}\nabla_{B} f=\lambda g_{B}$ with $\lambda\neq0$ constant.
    
    To finish the proof we may invoke \cite[Theorémè 3]{kerbrat}, which tells us that the universal covering of $B^{k}$ must be isometric to $\mathbb{R}^k$ and that $f(x,p)=\frac{1}{2}\lambda \vert x\vert^2+\left\langle v,x\right\rangle+c$, for some $v\in\mathbb{R}^k$ and $c\in\mathbb{R}$ (see also \cite[Theorem A.5]{brgs1}).
    
    Alternatively, the proof of Theorem \ref{pendT14'}, given above, establishes that $B^{r_{1}+1}=I \times_{h_1} N_1^{r_{1}}$, with $h_{1}$ and $f$ given by \eqref{wrpfct_lin} and \eqref{exp_for_f}, respectively. Notice that $\{-A/B-\varepsilon\}\times N_{1}$ is at a finite distance $\vert s_{0}-\varepsilon\vert$ of the set $\{-A/B-s_{0}\}\times N_{1}$, for any $\varepsilon>0$ and a fixed $s_{0}>-A/B$. Using that $B^{r_{1}+1}$ is complete and that $\displaystyle\lim_{s\rightarrow-A/B}h_{1}(s)=0$, we conclude that $\{-A/B\}\times N_{1}$ corresponds to a point $p_{0}$ in the interior of $B^{r_{1}+1}$. Computing the sectional curvature of $B^{r_{1}+1}$ around this point, we get for any $U,V\in\mathfrak{L}(N_{1})$ that
    \begin{align*}
        \mathrm{sec}_B(U,V)=\frac{\mathrm{sec}_{N_{1}}(U,V)-A^2}{(As+B)^2}.
    \end{align*}
    Since the curvature of $B^{r_{1}+1}$ must be bounded around $p_{0}$, we conclude that $\mathrm{sec}_B(U,V)$ is finite when $s$ goes to $-B/A$, implying that $\mathrm{sec}_{N_{1}}(U,V)=A^2$ on $N_{1}$, for any $U,V\in\mathfrak{L}(N_{1})$, showing that $N_{1}$ must have constant positive sectional curvature. This shows that the universal covering of $B^{k}$ is $\mathbb{R}^k$, $k=r_{1}+1$, completing the proof. Now we go from polar coordinates to the Cartesian ones, transforming $f$ given by \eqref{exp_for_f}, into $f(x,p)=\frac{1}{2}\lambda \vert x\vert^2+\left\langle v,x\right\rangle+c$, for some $v\in\mathbb{R}^k$ and $c\in\mathbb{R}$.
\end{proof}

\begin{proof}[{\bf Proof of Corollary \ref{pendT15}}]
    Assume the gradient almost Ricci soliton is locally conformally flat. Catino showed in \cite{catino0} that the Ricci tensor of $M$ has exactly two eigenvalues, and that $M$ is locally given by $I\times_{h} N^{n-1}$, where $I$ is an interval and $N^{n-1}$ is a manifold of constant sectional curvature $\mu/(n-2)$. Arguing as in the proof of Proposition \ref{pendT51_harmonic}, we conclude that $f$ and $\lambda$ must be given as in \eqref{eqitem_1_harm} and \eqref{eqitem_2_harm}, respectively. The converse follows from Corollary \ref{LCF_cor_defin_1fiber}.
\end{proof}

\begin{proof}[{\bf Proof of Theorem \ref{scht_weyHar}}]
    By definition \cite{brgs3,catino}, a gradient Schouten soliton is obtained by setting $\lambda=(1/2(n-1))R+\tau$ in \eqref{fundeq}, where $R$ is the scalar curvature of $M^n$ and $\tau\in\mathbb{R}$. Since we are assuming that it has harmonic Weyl tensor, we may apply Theorem \ref{decompwarpint} to get local representations around regular points of $f$ as a multiply warped product with a one-dimensional base and at most two fibers.
    
    First, consider the case in which $M$ is locally given by $I\times_{h}N^{n-1}$, where $N^{n-1}$ is Einstein, with Einstein constant $\mu$. It follows from \eqref{bari} and \eqref{eqXiRelFuncWarp} that $h''=h(\xi'+\xi^{2})=0$. Therefore, there are constants $A,B\in\mathbb{R}$ such that $h(s)=As+B$. Using \eqref{SistemaSimples}, we conclude that the gradient Schouten soliton equation is equivalent to
    \begin{align*} 
	\begin{split}
		\frac{\mu-(n-2)A^2}{2(As+B)^2}+\tau = f''\ \ \ \text{and}\ \ \ \frac{\mu-(n-2)A^2}{2(As+B)^2}+\tau = \frac{A}{As+B}f' + \frac{\mu-(n-2)A^2}{(As+B)^2},
	\end{split}
    \end{align*}
    where we have already used the expression of the scalar curvature given in Lemma \ref{warped} to compute the function $\lambda=(1/2(n-1))R+\tau$. Integrating the equation for $f''$, we easily see that it is equivalent to the other one, which after integrating again, shows the existence of constants $c_{0},c_{1}\in\mathbb{R}$ such that
    \begin{align}
        f(s)=-\frac{\mu-(n-2)A^2}{2A^2}\log(As+B)+\frac{\tau}{2}s^2+c_{0}s+c_{1}.
    \end{align}
    If we assume that $M$ is complete, then we must have $\mu-(n-2)A^2=0$, otherwise $f$ would explode at an interior point of $M$. Arguing as in the proof of Corollary \ref{cor}, we conclude that $M$ is the Gaussian soliton.
    
    Now, consider the case where $M$ is locally given by $I\times_{h_{1}}N_{1}^{r_1}\times_{h_{2}}N_{2}^{r_2}$, where $N^{r_i}_{i}$ is Einstein if $r_i\geq2$, with Einstein constant $\mu_{i}$, $i\in\{1,2\}$. Thus, there are constants $A_{i},B_{i}\in\mathbb{R}$ such that $h_{i}(s)=A_{i}s+B_{i}$. Using equations \eqref{SistemaTriplo}, we conclude that the gradient Schouten soliton equation becomes
    \begin{align}
        \lambda=f''\ \ \ \text{and}\ \ \ 
        \lambda=\frac{A_{i}}{A_{i}s+B_{i}}f'+\frac{\mu_{i}-(r_{i}-1)A_{i}^{2}}{(A_{i}s+B_{i})^2}-\frac{r_{i}A_{1}A_{2}}{(A_{1}s+B_{1})(A_{2}s+B_{2})},\ \ \ i\in\{1,2\}.
    \end{align}

    In what follows, we prove that either $A_{1}=0$ or $A_{2}=0$. In this case, we may consider without loss of generality that $A_{1}\neq0$ and $A_{2}=0$. Now, it is sufficient to proceed as in the proofs of Theorem \ref{pendT14'} and Corollary \ref{cor} to conclude that $M$ is rigid, as desired.
    
    To prove that $A_{1}A_{2}=0$, first notice that subtracting both equations involving $f'$ from one another and solving for $f'$, we obtain
    \begin{align}\label{eq1_mim}
        f'=\frac{\mu_{2}-(r_{2}-1)A_{2}^{2}}{A_{1}B_{2}-A_{2}B_{1}}\left(\frac{A_{1}s+B_{1}}{A_{2}s+B_{2}}\right)+\frac{\mu_{1}-(r_{1}-1)A_{1}^{2}}{A_{1}B_{2}-A_{2}B_{1}}\left(\frac{A_{2}s+B_{2}}{A_{1}s+B_{1}}\right)-\frac{(r_{1}-r_{2})A_{1}A_{2}}{A_{1}B_{2}-A_{2}B_{1}},
    \end{align}
    where we are using that $A_{1}B_{2}-A_{2}B_{1}\neq0$, which is equivalent to $h_1$ not being a constant multiple of $h_{2}$. Now, using $\lambda=f''$ and the expression of the scalar curvature given in Lemma \ref{warped}, we conclude that
    \begin{align*}
        f''=\frac{1}{2(n-1)}\left(\frac{r_{1}(\mu_{1}-(r_{1}-1)A_{1}^{2})}{(A_{1}s+B_{1})^2}+\frac{r_{2}(\mu_{2}-(r_{2}-1)A_{2}^{2})}{(A_{2}s+B_{2})^2}-\frac{2r_{1}r_{2}A_{1}A_{2}}{(A_{1}s+B_{1})(A_{2}s+B_{2})}\right)+\tau.
    \end{align*}
    
    Finally, assume by contradiction that $A_{1}A_{2}\neq0$. Then we can integrate the equation above, obtaining
    \begin{align*}
        f'=-\frac{1}{2(n-1)}\left(\frac{r_{1}(\mu_{1}-(r_{1}-1)A_{1}^{2})}{A_{1}(A_{1}s+B_{1})}+\frac{r_{2}(\mu_{2}-(r_{2}-1)A_{2}^{2})}{A_{2}(A_{2}s+B_{2})^2}+\frac{2r_{1}r_{2}A_{1}A_{2}}{A_{1}B_{2}-A_{2}B_{1}}\log\left(\frac{A_{1}s+B_{1}}{A_{2}s+B_{2}}\right)\right)+\tau.
    \end{align*}
    Exploring the fact that the equation above has a log term, while \eqref{eq1_mim} does not, we get a contradiction. This ensures that $A_{1}A_{2}=0$, finishing the proof.
\end{proof}

\section{Appendix: functions satisfying \eqref{sit_f_1'} and \eqref{sit_f_2'} are constant}\label{Ap_1}

\hspace{.5cm}In this appendix we prove Proposition \ref{constancy}, which asserts that a function satisfying \eqref{sit_f_1'} and \eqref{sit_f_2'} simultaneously must be constant. For the sake of completeness, we recall the equations in what follows.

Fix constants $C_{1}\neq0,\ C_{2}$ and $C_{3}$, and consider the equations
\begin{align}
    &(n-1)f'''\Lambda=(n-1)f'f''-(f')^3\tag{\ref{sit_f_1'}}\\
    &((n-2)f'f'''-(r_{1}-1)(f'')^2+C_{1}^{2}\mu_{1})\Lambda=r_{2}(f')^2f''\tag{\ref{sit_f_2'}}
\end{align}
where $\Lambda(f)=\Lambda=C_{1}C_{3}e^{\frac{1}{n-1}(f-C_{2})}-(n-1)$.

These equations are simultaneously satisfied by any constant when $\mu_1 = 0$, and admit a constant solution (the constant defined by the equation $\Lambda=0$) when $\mu\neq0$.  We will now employ a proof by contradiction to establish that these are the only functions satisfying both equations simultaneously. The case $r_1 = 1$ can be handled immediately, as shown below. 

\begin{proposition}\label{r1eq1}
    If $r_{1}=1$, then any function satisfying \eqref{sit_f_1'} and \eqref{sit_f_2'} simultaneously must be constant.
\end{proposition}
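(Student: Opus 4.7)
The crucial observation is that when $r_1 = 1$, the term $(r_1-1)(f'')^2$ disappears from equation \eqref{sit_f_2'}. This allows an immediate elimination of the highest derivative $f'''$ between \eqref{sit_f_1'} and \eqref{sit_f_2'}: multiplying \eqref{sit_f_1'} by $(n-2)f'/(n-1)$ gives $(n-2)f'f'''\Lambda = (n-2)(f')^2 f'' - (n-2)(f')^4/(n-1)$, and substituting into \eqref{sit_f_2'} (using $r_2 = n-2$) causes the $(n-2)(f')^2 f''$ terms to cancel, leaving the purely algebraic identity
$$ C_1^2 \mu_1 \Lambda = \frac{(n-2)(f')^4}{n-1}. $$
This is the key identity, and the whole proof will revolve around extracting a contradiction from it, assuming $f$ is nonconstant.

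If $\mu_1 = 0$, the identity gives $f' \equiv 0$ at once. So I may assume $\mu_1 \neq 0$ and set $K = (n-2)/[(n-1)C_1^2\mu_1]\neq 0$, so that $\Lambda = K(f')^4$. Suppose by contradiction that $f' \not\equiv 0$, and restrict to an open interval where $f' \neq 0$. Differentiating $\Lambda(f(s)) = K(f'(s))^4$ once with respect to $s$, and using the elementary identity $\Lambda'(f) = (\Lambda + (n-1))/(n-1)$ built into the definition of $\Lambda$, I get the first-order relation
$$ 4K(n-1)(f')^2 f'' = K(f')^4 + (n-1). $$
Differentiating once more and using \eqref{sit_f_1'} to eliminate $f'''$ (after replacing $\Lambda$ by $K(f')^4$ throughout), I expect the system to collapse to a single algebraic equation in the auxiliary quantity $X := K(f')^2 f''$. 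A direct computation should yield the nontrivial quadratic $2X^2 + 2X - 1 = 0$, whose two roots are nonzero real constants.

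The final step is to exploit the fact that $X$ is constant: this means $(f')^2 f''$ is a nonzero constant, so $((f')^3)'$ is constant, and hence both $f'$ and $f$ can be written explicitly as power functions of an affine parameter $\alpha s + \beta$. Substituting these closed-form expressions into the original exponential identity $K(f')^4 + (n-1) = C_1 C_3\, e^{(f-C_2)/(n-1)}$ produces an equality in which the left-hand side is a polynomial in $(\alpha s + \beta)^{4/3}$ while the right-hand side is an exponential of a polynomial in the same quantity. Such an equality cannot hold on an open interval, yielding the desired contradiction. The main obstacle I anticipate is the bookkeeping in the second differentiation step, where several products of $f', f'', f'''$ and $\Lambda$ must be carefully combined into a single algebraic equation in $X$; this is exactly the step where the hypothesis $r_1 = 1$ (and hence the vanishing of the $(f'')^2$ term) is essential.
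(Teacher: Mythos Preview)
Your argument is correct, but you have overlooked the one-line shortcut that the paper uses. In the geometric setup from which equations \eqref{sit_f_1'}--\eqref{sit_f_2'} arise, $\mu_1$ is the Einstein constant of the fiber $N_1^{r_1}$; when $r_1=1$ the fiber is one-dimensional, its Ricci tensor vanishes identically, and hence $\mu_1=0$ automatically. The paper invokes this at once, so your key identity $C_1^{2}\mu_1\Lambda=(n-2)(f')^{4}/(n-1)$ collapses to $(f')^{4}=0$ and the proof ends right there.

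Everything you do beyond that point --- the relation $4K(n-1)(f')^{2}f''=K(f')^{4}+(n-1)$, the reduction to the quadratic $2X^{2}+2X-1=0$ in $X=K(f')^{2}f''$, and the final polynomial-versus-exponential contradiction --- is correct and in fact establishes a slightly stronger, purely analytic statement: the system \eqref{sit_f_1'}--\eqref{sit_f_2'} with $r_1=1$ admits only constant solutions for \emph{any} value of $\mu_1$, not just $\mu_1=0$. But in the context of the proposition this case is vacuous, and the paper's proof is essentially a one-liner once $\mu_1=0$ is noted.
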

\begin{proof}
	Assume $r_1=1$ and, for the sake of contradiction, that there exists a nonconstant function $f$ satisfying \eqref{sit_f_1'} and \eqref{sit_f_2'}. Since $r_1 = 1$ implies $\mu_1=0$, it follows that $f$ satisfies
	\[
	(n-2)(f')^3f''=0 \quad\text{and}\quad 4(n-1)(n-2)(f')^2f''=(n-2)(f')^4.
	\]
	These identities force $f'\equiv0$, contradicting the assumption that $f$ is nonconstant.
\end{proof}

The case where $r_{1}>1$ is more involved. Assuming that there is a nonconstant function $f$  satisfying \eqref{sit_f_1'} and \eqref{sit_f_2'}, we will show the existence of a polynomial of degree $12$ for which $f'(s)$ is a solution, for any $s$. Then, this fact will be used to show the constancy of $f$, obtaining a contradiction (see Proposition \ref{propo_r1>1} below). The first step in obtaining this polynomial is given in the next result. It consists in finding other equations for a nonconstant function $f$ satisfying \eqref{sit_f_1'} and \eqref{sit_f_2'}, that do not involve $\Lambda$ or $f'''$.

\begin{lemma}
A nonconstant function $f$ satisfying \eqref{sit_f_1'} and \eqref{sit_f_2'} must also satisfy the following ODEs:
\begin{subequations}\label{wttLamb}
\begin{align}
&[(n-2)(f')^2 - (n-1)(r_1-1)f'']f'f''' =[C_1^2\mu_1 - (r_1-1)(f'')^2][(n-1)f'' - (f')^2],\label{wttLamb_1}\\\medskip
&\beta_1(f')^2f'' + \beta_3[(f'')^2 - f'f''']=\beta_2(f')^4 +(n-1)\beta_4\label{wttLamb_2},\\\medskip
&3 \beta_2\beta_3(f'')^2 - 4(n-1)\beta_2^2(f')^2f'' + \beta_2^2(f')^4 + (n-1)r_2 \beta_4 = 0, \label{wttLamb_3}
\end{align}
\end{subequations}
where 
\[ \begin{aligned}
& \beta_1 = (n-1)[3(r_1-1)+4r_2],\ \ \beta_2 = n-2,\ \ \beta_3 = (n-1)^2(r_1-1),\ \ \beta_4 = C_1^2 (n-1) \mu_1.
\end{aligned}
\]
\end{lemma}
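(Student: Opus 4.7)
The plan is to eliminate $\Lambda$ from \eqref{sit_f_1'} and \eqref{sit_f_2'}, making essential use of the identity
\[
\Lambda' \;=\; \tfrac{1}{n-1}\,f'\,(\Lambda + n-1),
\]
which follows at once from the defining formula $\Lambda = C_1 C_3\, e^{(f-C_2)/(n-1)} - (n-1)$. Any division by $f'$ below is justified because $f$ is nonconstant, and the resulting polynomial identities persist across isolated zeros of $f'$ by continuity.

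First, to obtain \eqref{wttLamb_1}, I would treat $\Lambda$ as a single unknown in \eqref{sit_f_1'} and \eqref{sit_f_2'} and eliminate it by cross-multiplication. Writing the two equations schematically as $A\Lambda = B$ and $C\Lambda = D$, the relation $AD = BC$ becomes
\[
r_2(n-1)(f')^2 f''\, f''' \;=\; \bigl[(n-1)f'f'' - (f')^3\bigr]\bigl[(n-2)f'f''' - (r_1-1)(f'')^2 + C_1^2\mu_1\bigr].
\]
Grouping the $f'''$-terms on one side and using $r_2 = n-1-r_1$, the $f'''$-part factors as $(f')^2 f'''\,[(n-2)(f')^2 - (n-1)(r_1-1)f'']$, while the remaining side factors as $f'\,[(n-1)f''-(f')^2]\,[C_1^2\mu_1 - (r_1-1)(f'')^2]$. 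Dividing by $f'$ gives \eqref{wttLamb_1}.

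Next, I would extract an auxiliary identity free of $f'''$ to enable a later differentiation step. Multiplying \eqref{sit_f_1'} by $(n-2)f'$ and subtracting $(n-1)$ times \eqref{sit_f_2'} cancels the $f'f'''\Lambda$ term and, after $(n-2)-r_2 = r_1-1$, yields
\[
(n-1)(r_1-1)(f')^2 f'' - (n-2)(f')^4 \;=\; (n-1)\bigl[(r_1-1)(f'')^2 - C_1^2\mu_1\bigr]\,\Lambda,\qquad(\star)
\]
whose key feature is that differentiating it produces $f'''$ but no $f^{(4)}$. I would then differentiate $(\star)$, replace $\Lambda'$ using the identity above, use \eqref{sit_f_1'} to substitute $f'''\Lambda = f'f'' - (f')^3/(n-1)$, and use $(\star)$ itself to substitute the remaining product $[(r_1-1)(f'')^2 - C_1^2\mu_1]\Lambda$. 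Every $\Lambda$ then disappears. After dividing by $f'$, collecting terms in $(f'')^2$, $(f')^2 f''$, $(f')^4$ and $f'f'''$, and using the arithmetic identity $4(n-2) - (r_1-1) = 3(r_1-1) + 4r_2$ (which is just $r_1 + r_2 = n-1$) to repackage the coefficients as $\beta_1,\beta_2,\beta_3,\beta_4$, one obtains \eqref{wttLamb_2}.

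Finally, \eqref{wttLamb_3} is obtained by eliminating $f'f'''$ algebraically between \eqref{wttLamb_1} and \eqref{wttLamb_2}, both of which are linear in $f'f'''$. Solving for $f'f'''$ in \eqref{wttLamb_2} and substituting into \eqref{wttLamb_1}, then clearing denominators and regrouping in the monomials $(f'')^2$, $(f')^2 f''$, $(f')^4$ and the constant $C_1^2\mu_1$, produces the polynomial identity \eqref{wttLamb_3}. The main obstacle throughout is careful bookkeeping of the $\beta_i$-coefficients in the last two steps; conceptually the whole argument reduces to a controlled sequence of linear eliminations.
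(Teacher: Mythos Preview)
Your approach is correct and essentially identical to the paper's. Your auxiliary identity $(\star)$ is precisely the paper's intermediate equation obtained by eliminating $f'''$ between \eqref{sit_f_1'} and \eqref{sit_f_2'}; the subsequent differentiation, substitution via $\Lambda'=\tfrac{1}{n-1}f'(\Lambda+n-1)$ and \eqref{sit_f_1'}, comparison with $(\star)$, and the final elimination of $f'f'''$ between \eqref{wttLamb_1} and \eqref{wttLamb_2} all mirror the paper's argument step for step.
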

\begin{proof}

	To obtain \eqref{wttLamb_1}, we first divide \eqref{sit_f_1'} by \eqref{sit_f_2'} and rearrange the resulting expression. Next, we observe that \eqref{sit_f_1'} can be used to eliminate $f'''$ from \eqref{sit_f_2'}. More precisely, setting $ (n-1)\beta_5 = \beta_3$, multiplying \eqref{sit_f_2'} by $(n-1)$ and substituting the expression for $f'''$ from \eqref{sit_f_1'} then leads to
	\begin{align}\label{new_wittLamb_1}
	\begin{split}
	[\beta_5 (f'')^2 -\beta_4 ]\Lambda 
	 &= -(n-1)r_{2}(f')^2f'' + (n-1)\beta_2 f'f'''\Lambda\\
	 &= -(n-1)r_{2}(f')^2f'' + \beta_2 f'[(n-1)f'f'' - (f')^3]\\
	 &= (n-1)[-r_{2} + \beta_2](f')^2f'' - \beta_2 (f')^4\\
	 &= \beta_5(f')^2f'' - \beta_2 (f')^4.
	\end{split}
	\end{align}
We now find another equation by taking the derivative of \eqref{new_wittLamb_1}. We will need the following identity:
	\begin{align}\label{lamb'}
	 (n-1)\Lambda' = f'\,(\Lambda + n - 1).
	\end{align}
	A straightforward computation shows that the derivative of \eqref{new_wittLamb_1} is
	\begin{align*}
	 [\beta_5(f'')^2 - \beta_4 ]\Lambda' 
	 &= -2\beta_5f''f'''\,\Lambda + \beta_5[2f'(f'')^2 + (f')^2 f''']-4\beta_2 (f')^3f''.
	\end{align*}
	Applying identity \eqref{lamb'} and \eqref{sit_f_1'} to replace $f'''$, we deduce that
	\begin{align*}
	[(r_{1}-1)(f'')^2 - C_1^2 \mu_1] f'\,(\Lambda + n - 1)
	 &= -2(r_{1}-1)f''[(n-1)f'f'' - (f')^3] \\
	 &\quad + \beta_5[2f'(f'')^2 + (f')^2 f''']- 4\beta_2 (f')^3f''.
	\end{align*}
	Reorganizing terms yields
	\begin{align*}
	[(r_{1}-1)(f'')^2 - C_1^2\,\mu_1] f'\,\Lambda
	 &= - \beta_5f'(f'')^2 + 2(r_{1}-1)(f')^3f''+ \beta_5(f')^2 f''' - 4\beta_2 (f')^3f'' + \beta_4 f'.
	\end{align*}
	After canceling $f'$ out, we arrive at
	\begin{align}\label{new_wittLamb_2}
	[(r_{1}-1)(f'')^2 - C_1^2\,\mu_1]\Lambda
	 = \beta_5[f'f'''-(f'')^2] - 2(r_{1}+2r_{2}-1)(f')^2f'' + \beta_4.
	\end{align}
	Now, notice that multiplying \eqref{new_wittLamb_2} by $n-1$, its left-hand side becomes the same as in \eqref{new_wittLamb_1}. Comparing both equations we get
	\[
	 \beta_3[f'f'''-(f'')^2] - 2(n-1)(r_{1}+2r_{2}-1)(f')^2f'' + (n-1)\beta_4
	 = \beta_5(f')^2f'' - \beta_2 (f')^4.
	\]
	Rearranging the terms, we obtain
	\[
	 \beta_3[f'f'''-(f'')^2] = \beta_1 (f')^2f'' - \beta_2 (f')^4 - (n-1)\beta_4,
	\]
	which is exactly \eqref{wttLamb_2}. In a similar way, by isolating $f'f'''$ in \eqref{wttLamb_2} and substituting back into \eqref{wttLamb_1}, we obtain \eqref{wttLamb_3}.
\end{proof}

In the next result we establish a polynomial $P(x)$ for which $P\bigl(f'(s)\bigr)=0,\ \forall s$. This will be used in the following proposition to show that \eqref{sit_f_1'}--\eqref{sit_f_2'} do not have any nonconstant solutions.

\begin{lemma}\label{lem:nontrivial_polynomial}
	Suppose that $r_1>1$. Then there exists a nonconstant polynomial 
	\[
	P(x)=a_{12}x^{12}+a_8x^8+a_4x^4+a_0,
	\]
	with coefficients $a_i$ depending only on $n$, $r_1$, $r_2$, and $C_1$, and for which $a_{12}\neq0$, such that 
	\[
	P\bigl(f'(s)\bigr)=0,\ \forall s.
	\]
\end{lemma}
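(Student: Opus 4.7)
The plan is to successively eliminate $f'''$ and $v := f''$ from the three identities \eqref{wttLamb_1}, \eqref{wttLamb_2}, \eqref{wttLamb_3}, exploiting \eqref{wttLamb_3} as a quadratic in $v$ at every stage. First, I would differentiate \eqref{wttLamb_3} in $s$ to obtain, upon cancellation, a second expression for $f'''$ as a rational function of $u := f'$ and $v$, of the form
\[
[3\beta_3 v - 2(n-1)\beta_2 u^2]\, f''' = 2\beta_2\, u v\,[2(n-1)v - u^2].
\]
Multiplying by $u$ and comparing with the expression for $u f'''$ extracted from \eqref{wttLamb_2} yields, after clearing denominators, a polynomial relation $\Phi(u,v) = 0$ that is free of $f'''$ and cubic in $v$.

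Next, I would reduce $\Phi(u,v)$ modulo \eqref{wttLamb_3}: since \eqref{wttLamb_3} is quadratic in $v$, every occurrence of $v^2$ and $v^3$ can be replaced using
\[
3\beta_2\beta_3 v^2 = 4(n-1)\beta_2^2 u^2 v - \beta_2^2 u^4 - (n-1) r_2 \beta_4.
\]
The outcome is a relation $A(u)\, v + B(u) = 0$ that is linear in $v$, with $A,B$ polynomials in $u$. Solving $v = -B/A$ (where $A \not\equiv 0$) and substituting into \eqref{wttLamb_3}, then clearing the $A^2$ denominator, produces
\[
P(u) = 3\beta_2\beta_3\, B(u)^2 + 4(n-1)\beta_2^2 u^2\, A(u) B(u) + \beta_2^2 u^4\, A(u)^2 + (n-1) r_2 \beta_4\, A(u)^2,
\]
which is the candidate polynomial in $u$ alone.

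To see that $P(u)$ takes the prescribed form $a_{12}u^{12} + a_8 u^8 + a_4 u^4 + a_0$, I would invoke a weighted-homogeneity observation. Assigning $\deg u = 1$, $\deg v = 2$, $\deg \beta_4 = 4$, and $\deg \beta_i = 0$ for $i \neq 4$, equation \eqref{wttLamb_3} is weighted-homogeneous of weight $4$ while the derived relation $\Phi$ is of weight $6$; since the reduction preserves this grading, one is forced into
\[
A(u) = A_4 u^4 + A_0 \beta_4, \qquad B(u) = B_6 u^6 + B_2 \beta_4 u^2,
\]
with $A_0, A_4, B_2, B_6$ of weight zero (i.e. polynomials in $n, r_1, r_2$). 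Expanding $P(u)$ in this form then directly displays only the monomials $u^{12}$, $\beta_4 u^8$, $\beta_4^2 u^4$, $\beta_4^3$, giving the desired shape.

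The main obstacle is verifying $a_{12} \neq 0$. A tedious but routine computation of the reduction yields
\[
B_6 = -\tfrac{1}{3}\beta_2(n-1)(r_1+4r_2-1), \qquad A_4 = \tfrac{4}{3}\beta_2(n-1)^2(r_1+4r_2-1),
\]
and substitution into the coefficient of $u^{12}$ in $P$ triggers an exact cancellation between $4(n-1)\beta_2^2 A_4 B_6$ and $\beta_2^2 A_4^2$, leaving only $3\beta_2\beta_3 B_6^2$. Hence
\[
a_{12} = 3\beta_2^3 \beta_3 (n-1)^2 \left(\frac{r_1+4r_2-1}{3}\right)^2,
\]
which is strictly positive under the standing hypothesis $r_1 > 1$ (which secures $\beta_3 = (n-1)^2(r_1-1) > 0$). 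The delicate point, and therefore the main obstacle, is precisely this cancellation: if $r_1 = 1$ then $\beta_3 = 0$ and the construction collapses, consistent with the fact that the case $r_1 = 1$ was handled separately in Proposition \ref{r1eq1}.
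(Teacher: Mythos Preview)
Your argument is correct and arrives at the same conclusion as the paper, but the packaging differs. The paper writes $f''=u(y)$ with $y=f'$, solves \eqref{wttLamb_3} explicitly via the quadratic formula $u=\eta_1 y^2+\varepsilon\sqrt{\eta_2 y^4-\eta_3}$, differentiates to obtain $\dot u$, substitutes both into the ODE coming from \eqref{wttLamb_2}, and then clears the radical by squaring; the leading coefficient is computed by brute force to be $a_{12}=-\frac{(n-2)^2(4n-3r_1-5)^2}{81(n-1)^6(r_1-1)^4}$. You instead keep everything polynomial: differentiate \eqref{wttLamb_3} in $s$, eliminate $f'''$ against \eqref{wttLamb_2}, reduce the resulting cubic in $v$ modulo the quadratic \eqref{wttLamb_3}, and take a resultant. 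I checked your values of $A_4$ and $B_6$ and the cancellation $4(n-1)B_6+A_4=0$; they are correct, and $4n-3r_1-5=r_1+4r_2-1$ matches the paper's factor. Your $a_{12}$ differs from the paper's by a positive multiplicative constant (and a sign, from a different normalization of $P$), which is harmless. Your weighted-homogeneity argument for why only $u^{12},u^8,u^4,u^0$ appear is a genuine improvement in clarity over the paper's direct expansion. Two minor remarks: (1) you announce using all three identities \eqref{wttLamb_1}--\eqref{wttLamb_3} but in fact, like the paper, you never touch \eqref{wttLamb_1}; (2) the caveat ``where $A\not\equiv0$'' is better phrased as a resultant computation, since $A(u)v+B(u)=0$ together with the quadratic forces $P(u)=0$ regardless of whether $A(u(s))$ vanishes at isolated points.
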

    
\begin{proof}
    In the course of the proof, we will introduce a number of auxiliary coefficients, in order to simplify the argument.
    
	Set $y=f'$ and write $f''$ as a function of $y$, namely $f''=u(y)=u(f')$. We will represent the derivative of $u$ with respect to $y$ by $\dot{u}$. With this notation, equation \eqref{wttLamb_2} then becomes 
			\begin{equation}\label{i}
				y u \dot{u} = u^2 + \tilde{\beta_1} y^2 u - \tilde{\beta_2} y^4 - \delta, \tag{i}
			\end{equation}
			where 
			\[
			\widetilde{\beta}_i\,\beta_3 = \beta_i \quad\text{and}\quad \delta = C_1^2 (n-1)^2 \mu_1.
			\]
		Now, setting
    \begin{align*}
        \gamma_{1}=3\beta_{1}\beta_{2} ,\ \ \gamma_{2}=4(n-1)\beta_{1}^2,\ \ \gamma_{3}=\beta^{2}_{2},\ \ \gamma_{4}=(n-1)r_{2}\beta_{4},
    \end{align*}
		we may rewrite \eqref{wttLamb_3} in the equivalent form
		\[\gamma_1 u^2 - \gamma_2 y^2 u  + \gamma_3 y^4 + \gamma_4 = 0.\]
		One can then solve this quadratic equation for $u$ as a function of $y$, obtaining
		\begin{equation}\label{ii}
	      u(y) = \eta_1 y^2 + \varepsilon \sqrt{\eta_2 y^4 - \eta_3}\tag{ii}.
	    \end{equation}
		where $\epsilon \in \{-1, 1\}$ and the constant $\eta_{i}'$s are defined by
        \begin{align*}
             2\gamma_{1}\eta_{1}=\gamma_{2},\ 4\gamma_{1}^2\eta_{2}=\gamma_{2}^2-4\gamma_{1}\gamma_{3},\ \gamma_{1}\eta_{3}=\gamma_{4}.
        \end{align*}
	This immediately implies that
				\begin{equation}\label{iii}
	\dot{u}
	=2\,\eta_{1}y
	+\frac{2\varepsilon\,\eta_{2}y^{3}}{\sqrt{\eta_{2}y^{4}-\eta_{3}}}\,.
	\tag{iii}
	\end{equation}

	Next, we substitute \eqref{ii} and \eqref{iii} into \eqref{i} to deduce
	\begin{align*}
	\qquad
	& y\Bigl(\eta_1 y^{2} + \varepsilon\sqrt{\eta_{2}y^{4}-\eta_{3}}\Bigr)
	   \Bigl(2\eta_{1}y + \frac{2\varepsilon\eta_{2}y^{3}}
	   {\sqrt{\eta_{2}y^{4}-\eta_{3}}}\Bigr)  \tag{6} \\
	&= \Bigl(\eta_1 y^{2} + \varepsilon\sqrt{\eta_{2}y^{4}-\eta_{3}}\Bigr)^{2}
	   + \tilde{\beta}_{1} y^{2}\Bigl(\eta_1 y^{2} + \varepsilon\sqrt{\eta_{2}y^{4}-\eta_{3}}\Bigr)
	   - \tilde{\beta}_{2}y^{4} - \delta \\
	&= \eta_{1}^{2}y^{4}
	   + 2\varepsilon\eta_{1}y^{2}\sqrt{\eta_{2}y^{4}-\eta_{3}}
	   + \eta_{2}y^{4} - \eta_{3} \\
	&\quad\; + \tilde{\beta}_{1}\eta_{1}y^{4}
	   + \tilde{\beta}_{1}\varepsilon y^{2}\sqrt{\eta_{2}y^{4}-\eta_{3}}
	   - \tilde{\beta}_{2}y^{4} - \delta \\
	&= \bigl(\eta_{1}^{2}+\eta_{2}+\tilde{\beta}_{1}\eta_{1}-\tilde{\beta}_{2}\bigr) y^{4}
	   - (\eta_{3}+\delta)
	   + \bigl(2\varepsilon\eta_{1}+\tilde{\beta}_{1}\varepsilon\bigr)
		 y^{2}\sqrt{\eta_{2}y^{4}-\eta_{3}}.
	\end{align*}
	Hence, we have
	\[ \begin{aligned}
	&\bigl(\eta_{1}^{2}+\eta_{2}+\tilde{\beta}_{1}\eta_{1}-\tilde{\beta}_{2}\bigr) y^{4}
	   - (\eta_{3}+\delta)
	   + \bigl(2\varepsilon\eta_{1}+\tilde{\beta}_{1}\varepsilon\bigr)
		 y^{2}\sqrt{\eta_{2}y^{4}-\eta_{3}} \\ 
	\qquad
	&= y\Biggl(
		   2\eta_{1}^{2}y^{3}
		   + \frac{2\varepsilon \eta_{2}^{2}y^{5}}{\sqrt{\eta_{2}y^{4}-\eta_{3}}}
		   + 2\varepsilon\eta_{1}y\sqrt{\eta_{2}y^{4}-\eta_{3}}
		   + 2\eta_{2}y^{3}
		 \Biggr).
	\end{aligned}
	\]
	Multiplying both sides by $\sqrt{\eta_{2}y^{4}-\eta_{3}}$ and rearranging the terms yields

	\[
	([(\tilde{\beta_1} - \eta_1) \eta_1 - \eta_2 - \tilde{\beta_2}] y^4 - (\eta_3 + \delta))^2 (\eta_2 y^4 - \eta_3) - y^4 ([2 \eta_1 - \tilde{\beta_1}] \eta_2 y^4 + \tilde{\beta_1} \eta_3)^2 = 0.
	\]

	This expression can be rearranged, giving rise to the equality $P(s)=0$, where $P(x)=a_{12}x^{12}+a_8x^8+a_4x^4+a_0$. In what follows we will show that the polynomial $P(x)$ is nonconstant and has degree 12. This will be accomplished once we show that $a_{12}$ is not zero. A straightforward computation shows that
	\[
	a_{12} =  ([\tilde{\beta_1} - \eta_1] \eta_1 - \eta_2 - \tilde{\beta_2})^2 \eta_2 - (2 \eta_1 - \tilde{\beta_1})^2 \eta_2^2.
	\]
	Noting that both terms possess a common factor of $\eta_2$, we factor it to obtain
	\begin{align*}
	a_{12}
	&=  \eta_2 (X^2 - Y^2 \eta_2),
	\end{align*}
	with
	\begin{align*}
	X &= (\tilde{\beta}_1 - \eta_1)\eta_1 - \eta_2 - \tilde{\beta}_2\ \ \ \ \text{and}\ \ \ \ Y = 2\eta_1 - \tilde{\beta}_1.
	\end{align*}
    
    In what follows we compute the auxiliary coefficients in terms of $n,\ r_{1},\ r_{2},\ \mu_{1}$ and $C_{1}$. Using the definitions of $\beta_{i}'s$, we can easily check that
    \[
		\gamma_1 = 3 (n-2)(n-1)^2 (r_1-1), \quad
		\gamma_2 = 4 (n-2)^2 (n-1), \quad
		\gamma_3 = (n-2)^2, \quad
		\gamma_4 = C_1^2 (n-1)^2 r_2 \mu_1.
		\]
	Now we use $r_2 = n - 1 - r_1$ and the expressions for $\gamma_{i}'s$ and $\beta_{i}'s$, to compute $\eta_{1}$, $\eta_{2}$, $\tilde{\beta}_1$ and $\tilde{\beta}_2$:
	\begin{align*}
	    &\eta_1 = \frac{\gamma_2}{2\gamma_1}=\frac{4\,(n-2)^2(n-1)}{2\cdot 3\,(n-2)(n-1)^2\,(r_1-1)}=\frac{2\,(n-2)}{3\,(n-1)(r_1-1)},\\
        &\eta_2 = \frac{\gamma_2^2-4\gamma_1\gamma_3}{4\gamma_1^2}=\frac{\Bigl(4\,(n-2)^2(n-1)\Bigr)^2-4\cdot 3\,(n-2)(n-1)^2\,(r_1-1)\,(n-2)^2}{4\Bigl(3\,(n-2)(n-1)^2\,(r_1-1)\Bigr)^2}=\frac{(n-2)\Bigl(4n-3r_1-5\Bigr)}{9\,(n-1)^2\,(r_1-1)^2},\\
        &\tilde{\beta}_1 = \frac{\beta_1}{\beta_3} = \frac{(n-1)\Bigl(3(r_1-1)+4r_2\Bigr)}{(n-1)^2(r_1-1)}=\frac{3(r_1-1)+4\bigl(n-1-r_1\bigr)}{(n-1)(r_1-1)}=\frac{4n-r_1-7}{(n-1)(r_1-1)},\\
        &\tilde{\beta}_2 = \frac{n-2}{(n-1)^2(r_1-1)}.
	\end{align*}
	
    Next, we use the expressions above to compute $X$. First, notice that 
	\begin{align*}
		\tilde{\beta}_1 - \eta_1 &= \frac{4n - r_1 - 7}{(n-1)(r_1-1)} - \frac{2(n-2)}{3(n-1)(r_1-1)}= \frac{3(4n - r_1 - 7) - 2(n-2)}{3(n-1)(r_1-1)}= \frac{10n - 3r_1 - 17}{3(n-1)(r_1-1)}.
	\end{align*}
	Consequently, $X$ can be computed as below:
	\begin{align*}
		X &= \left(\frac{10n - 3r_1 - 17}{3(n-1)(r_1-1)}\right) \eta_1 - \eta_2 - \tilde{\beta}_2 \\
		&= \left(\frac{10n - 3r_1 - 17}{3(n-1)(r_1-1)}\right)\left(\frac{2(n-2)}{3(n-1)(r_1-1)}\right) - \frac{(n-2)(4n - 3r_1 - 5)}{9(n-1)^2(r_1-1)^2} - \frac{n-2}{(n-1)^2(r_1-1)} \\
		&= \frac{2(n-2)(10n - 3r_1 - 17)}{9(n-1)^2(r_1-1)^2} - \frac{(n-2)(4n - 3r_1 - 5)}{9(n-1)^2(r_1-1)^2} - \frac{9(n-2)(r_1-1)}{9(n-1)^2(r_1-1)^2} \\
		&= \frac{(n-2) \left[ 2(10n - 3r_1 - 17) - (4n - 3r_1 - 5) - 9(r_1-1) \right]}{9(n-1)^2(r_1-1)^2} \\
		&= \frac{(n-2) \left[ 16n - 12r_1 - 20 \right]}{9(n-1)^2(r_1-1)^2} \\ 
		&= \frac{4(n-2)(4n - 3r_1 - 5)}{9(n-1)^2(r_1-1)^2}.
	\end{align*}
    
	The computation of $Y$ proceeds as follows:
	\begin{align*}
		Y &= 2\eta_1 - \tilde{\beta}_1= 2\left(\frac{2(n-2)}{3(n-1)(r_1-1)}\right) - \frac{4n - r_1 - 7}{(n-1)(r_1-1)} \\
		&= \frac{4(n-2)}{3(n-1)(r_1-1)} - \frac{3(4n - r_1 - 7)}{3(n-1)(r_1-1)} \\
		&= \frac{-8n + 3r_1 + 13}{3(n-1)(r_1-1)}.
	\end{align*}
	Substituting these simplified forms of $X$ and $Y$ back into the expression $a_{12}  = \eta_2 (X^2 - Y^2 \eta_2)$, we get
	\begin{align*}
		a_{12} &= \eta_2 \left(\frac{16(n-2)^2(4n - 3r_1 - 5)^2}{81(n-1)^4(r_1-1)^4} - \frac{(-8n + 3r_1 + 13)^2 (n-2)(4n - 3r_1 - 5)}{81(n-1)^4(r_1-1)^4}\right) \\ 
		&= \eta_2 \frac{(n-2)(4n - 3r_1 - 5)}{81(n-1)^4(r_1-1)^4} \left[ 16(n-2)(4n - 3r_1 - 5) - (-8n + 3r_1 + 13)^2\right] \\
		&= \eta_2 \frac{(n-2)(4n - 3r_1 - 5)}{81(n-1)^4(r_1-1)^4}[-9(r_1-1)^2] \\ 
		&= \left(\frac{(n-2)(4n - 3r_1 - 5)}{9(n-1)^2(r_1-1)^2}\right) \left(\frac{-(n-2)(4n - 3r_1 - 5)}{9(n-1)^4(r_1-1)^2}\right) \\ 
		&= -\frac{(n-2)^2 (4n - 3r_1 - 5)^2}{81(n-1)^6 (r_1-1)^4}.
	\end{align*}
	This expression shows that $a_{12} \neq 0$. Thus, \(P\) is not constant and has degree 12, as claimed. Furthermore, $P(f'(s))=0$, $\forall s$, finishing the proof.
\end{proof}

\begin{proposition}\label{propo_r1>1}
    If $r_{1}>1$, then any function satisfying \eqref{sit_f_1'} and \eqref{sit_f_2'} simultaneously must be constant.
\end{proposition}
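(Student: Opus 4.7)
The plan is to argue by contradiction: suppose $f$ is a nonconstant solution of \eqref{sit_f_1'} and \eqref{sit_f_2'} with $r_1>1$, and derive that $f$ must in fact be constant. Since $r_1>1$, Lemma \ref{lem:nontrivial_polynomial} provides a polynomial
\[
P(x)=a_{12}x^{12}+a_{8}x^{8}+a_{4}x^{4}+a_{0},
\qquad a_{12}\neq0,
\]
whose coefficients depend only on $n$, $r_1$, $r_2$, $C_1$ (and $\mu_1$), and which satisfies $P(f'(s))=0$ for every $s$ in the domain of $f$.

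The first key step is a connectedness argument. Because $P$ is a nonzero polynomial of degree $12$, its zero set $Z(P)\subset\mathbb{R}$ is finite (at most $12$ points). The map $s\mapsto f'(s)$ is continuous, and its image lies entirely inside the discrete set $Z(P)$; since the domain of $f$ is an interval, and the continuous image of a connected set is connected, $f'$ must be constant on that interval. Write $f'\equiv c$ for some $c\in\mathbb{R}$.

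The second step is to plug $f'\equiv c$ into one of the original ODEs to force $c=0$. From $f'\equiv c$ we get $f''\equiv 0$ and $f'''\equiv 0$. Substituting into \eqref{sit_f_1'} gives
\[
0=(n-1)f'''\,\Lambda=(n-1)f'f''-(f')^{3}=-c^{3},
\]
so $c=0$. Therefore $f'\equiv 0$, contradicting the assumption that $f$ is nonconstant. Hence every solution of \eqref{sit_f_1'}--\eqref{sit_f_2'} is constant, which together with Proposition \ref{r1eq1} completes the proof of Proposition \ref{constancy}.

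The only delicate point in this strategy is the polynomial identity itself, which has already been absorbed into Lemma \ref{lem:nontrivial_polynomial}; once that lemma is in hand, the remaining argument is the short continuity-and-substitution reasoning above, and I do not anticipate any further obstacle.
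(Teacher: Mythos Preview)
Your proof is correct and follows essentially the same approach as the paper: invoke Lemma \ref{lem:nontrivial_polynomial}, use that the continuous function $f'$ maps a connected interval into the finite zero set of $P$ to conclude $f'$ is constant, and then substitute into \eqref{sit_f_1'} to force $f'\equiv 0$. The only superfluous remark is the closing sentence about Proposition \ref{r1eq1} and Proposition \ref{constancy}, which lies outside the scope of the statement being proved.
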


\begin{proof}
    Assume by contradiction that there is a nonconstant function $f$ satisfying equations \eqref{sit_f_1'} and \eqref{sit_f_2'}, and consider the nonconstant polynomial $P$ of degree $12$, given in the previous lemma. Since $P(f'(s))=0$, $\forall s$, we conclude that $P$ must have a real root, and that $f'$ must be constant. By \eqref{sit_f_1'}, we have $f'=0$, which contradicts the assumption that $f$ is nonconstant.
\end{proof}

\section{Acknowledgments}
V. Borges was partially supported by Capes Finance Code 001 and he thanks the Mathematics Department of Universidade de Brasília, where this work was conducted. M. A. R. M. Horácio was supported by Capes Finance Code 001. J. P. dos Santos was partially supported by CNPq 313200/2025-4 and FAPDF 00193-00001678/2024-39.

\end{document}